%% file: main.tex
\input{preamble}

\usepackage{abstract}

\title{Quantum Graph Theory by Example}

\author[1,2]{Gian Luca Spitzer \thanks{\href{mailto:gian-luca.spitzer@u-bordeaux.fr}{gian-luca.spitzer@u-bordeaux.fr}}}
\affil[1]{LaBRI, Universit\'e de Bordeaux, CNRS, Bordeaux INP, UMR-5800, France}

\author[2]{Ion Nechita \thanks{\href{mailto:nechita@irsamc.ups-tlse.fr}{nechita@irsamc.ups-tlse.fr}}}
\affil[2]{Laboratoire de Physique Th\'eorique, Universit\'e de Toulouse, CNRS, UPS, France}

\date{}

\begin{document}

\maketitle

\begin{abstract}
  Quantum graphs have been introduced by Duan, Severini, and Winter to describe the zero-error behaviour of quantum channels. Since then, quantum graph theory has become a field of study in its own right. A substantial source of difficulty in working with quantum graphs compared to classical graphs stems from the fact that they are no longer discrete objects. This makes it generally difficult to construct insightful, non-trivial examples. We present a collection of non-trivial quantum graphs that can be thought of in discrete terms, and that can be expressed in the diagrammatic formalism introduced by Musto, Reutter, and Verdon. The examples arise as the quantum graphs acted on by increasingly smaller classical matrix groups, and are parametrised by triples of matrices $(A, B, C)$. The parametrisation reveals a clean decomposition of quantum graph structure into classical and genuinely quantum components: $A$ and $C$ are described by a classical weighted graph called the strange graph, while $B$ provides a purely quantum contribution with no classical analogue. Based on this model, we give exact formulas or establish bounds for quantum graph parameters, such as the number of connected components, the chromatic number, the independence number, and the clique number. Our results provide the first large, parametric families of quantum graphs for which standard graph parameters can be computed analytically.
\end{abstract}

\tableofcontents

\newpage

\section{Introduction}\label{sec:introduction}
\input{content/introduction}

\section{Preliminaries}\label{sec:preliminaries}
\input{content/prelims}

\section{Classical Matrix Groups Acting on Quantum Graphs}\label{sec:groupactions}
\input{content/groupactions}

\section{Graph Theoretic Properties}\label{sec:graphtheory}
\input{content/graphtheory}
\section{Conclusion}\label{sec:conclusion}
\input{content/conclusion}

\bigskip

\noindent\textbf{Acknowledgements.} The authors were supported by the ANR projects \href{https://esquisses.math.cnrs.fr/}{ESQuisses} grant number ANR-20-CE47-0014-01 and \href{https://www.ceremade.dauphine.fr/dokuwiki/anr-tagada:start}{TAGADA} grant number ANR-25-CE40-5672. The first author moreover acknowledges support from the CNRS 80Prime grant ``QuantGraphe''.

\printbibliography

\end{document}

%% file: preamble.tex
\documentclass{article}

\usepackage[table,dvipsnames]{xcolor}

\usepackage{lmodern}

\usepackage[margin=2.5cm]{geometry}
\usepackage[utf8]{inputenc}
\usepackage[T1]{fontenc}
\usepackage{scrbase}
\usepackage{microtype}
\usepackage[shortlabels]{enumitem}
\usepackage{etoolbox}
\usepackage{graphicx}
\usepackage{tabularx}
\usepackage{arydshln}
\usepackage{multirow}
\usepackage{makecell}
\usepackage{pdflscape}
\usepackage{authblk}

\usepackage[
backend=biber,
style=alphabetic,
maxnames=99,
backref=true
]{biblatex}
\usepackage{amsmath}
\usepackage{amssymb}
\usepackage{amsthm}
\usepackage{thmtools}
\usepackage{mathtools}
\usepackage{braket}

\usepackage{todonotes}

\usepackage{array}
\newcolumntype{Y}{>{\centering\arraybackslash}X}
\definecolor{betterblue}{RGB}{0, 110, 230}
\definecolor{bettergreen}{RGB}{164, 180, 43}

\usepackage[most]{tcolorbox}
\newtcolorbox{mybox}{
    colback=orange!10, colframe=orange!40, arc=4mm, boxrule=1pt,
    left=6pt, right=6pt, top=6pt, bottom=6pt,
    width=0.95\textwidth, center
}

\usepackage{hyperref}
\hypersetup{%
	pdftoolbar=false,
	pdfmenubar=false,
	colorlinks=true,
	urlcolor={OliveGreen},
	linkcolor={betterblue},
	citecolor={orange}
}
\usepackage[capitalise,noabbrev]{cleveref}

\theoremstyle{plain}
\newtheorem{theorem}{Theorem}[section]
\newtheorem{lemma}[theorem]{Lemma}
\newtheorem{proposition}[theorem]{Proposition}
\newtheorem{corollary}[theorem]{Corollary}

\newtheorem{observation}[theorem]{Observation}

\newtheorem*{question*}{Question}

\newtheorem*{conjecture*}{Conjecture}
\newtheorem{claim}{Claim}
\AtEndEnvironment{proposition}{\setcounter{claim}{0}}
\AtEndEnvironment{lemma}{\setcounter{claim}{0}}
\AtEndEnvironment{theorem}{\setcounter{claim}{0}}

\theoremstyle{definition}
\newtheorem{definition}[theorem]{Definition}

\newtheorem{remark}[theorem]{Remark}
\newtheorem{examplex}[theorem]{Example}
\newenvironment{example}
  {\pushQED{\qed}\examplex}
  {\popQED\endexamplex}

\newenvironment{subproof}[1][\proofname]{%
  \begin{proof}[#1]%
}{%
  \end{proof}%
}

\renewcommand{\epsilon}{\varepsilon}

\newcommand{\C}{\ensuremath{\mathbb{C}}}
\newcommand{\R}{\ensuremath{\mathbb{R}}}
\newcommand{\N}{\ensuremath{\mathbb{N}}}

\DeclareMathOperator{\GL}{GL}
\DeclareMathOperator{\fdHilb}{fdHilb}

\DeclareMathOperator{\End}{End}

\DeclareMathOperator{\id}{id}
\DeclareMathOperator{\Tr}{Tr}
\DeclareMathOperator{\diag}{diag}
\DeclareMathOperator{\lspan}{span}

\DeclareMathOperator{\img}{img}

\DeclareMathOperator{\rk}{rk}

\newcommand{\transpose}{^\mathsf{T}}
\newcommand{\adjoint}{^\dagger}

\newcommand{\identity}{I}
\newcommand{\compl}[1]{#1^c}

\newcommand{\1}{\mathbf 1}
\newcommand{\IFF}{\Longleftrightarrow}

\newcommand{\Implies}{\Longrightarrow}
\renewcommand{\implies}{\Rightarrow}
\newcommand{\abs}[1]{\lvert #1 \rvert}
\newcommand{\norm}[1]{\lVert #1 \rVert}
\newcommand{\Cstar}{C\ensuremath{^*}}

\newcommand{\Gsym}{G^\mathrm{sym}}
\newcommand{\Gasym}{G^\mathrm{asym}}

\newcommand{\ccdot}{\makebox[1ex]{\textbf{$\cdot$}}}

\newcommand{\lrangle}[1]{\langle #1 \rangle}

\makeatletter
\let\amsmath@bigm\bigm

\renewcommand{\bigm}[1]{%
  \ifcsname fenced@\string#1\endcsname
    \expandafter\@firstoftwo
  \else
    \expandafter\@secondoftwo
  \fi
  {\expandafter\amsmath@bigm\csname fenced@\string#1\endcsname}%
  {\amsmath@bigm#1}%
}

\newcommand{\DeclareFence}[2]{\@namedef{fenced@\string#1}{#2}}
\makeatother

\DeclareFence{\mid}{|} % dynamic-size \mid

\makeatletter
\DeclareRobustCommand\ket[1]{%
  \@ifnextchar\bra{\k@t{#1}\!}{\k@t{#1}}%
}
\newcommand\k@t[1]{{|{#1}\rangle}}
\makeatother

\makeatletter
\newcommand{\vast}{\bBigg@{3}}
\newcommand{\Vast}{\bBigg@{4}}
\newcommand{\VVast}{\bBigg@{5}}
\newcommand{\VAST}{\bBigg@{6}}
\makeatother

\newcommand{\pad}[2]{\hspace{#1}#2\hspace{#1}}

\makeatletter
\patchcmd{\@maketitle}{\LARGE \@title}{\fontsize{19}{21.2}\selectfont\@title}{}{}
\makeatother

\input{./style/my_tikzit.sty}
\input{./style/diagrams.tikzstyles}
\input{./style/tikz_adjustments.sty}

\input{./style/shapes.tex}

%% file: style/diagrams.tikzstyles
\tikzstyle{myBlue}=[draw={rgb,255: red,25; green,132; blue,232}]
\tikzstyle{myBlueFill}=[fill={rgb,255: red,25; green,132; blue,232}]
\tikzstyle{myGreen}=[draw={rgb,255: red,164; green,180; blue,43}]
\tikzstyle{myPink}=[draw={rgb,255: red,208; green,12; blue,133}]
\tikzstyle{myPinkFill}=[fill={rgb,255: red,208; green,12; blue,133}]
\tikzstyle{myYellow}=[draw={rgb,255: red,255; green,160; blue,0}]
\tikzstyle{myYellowFill}=[fill={rgb,255: red,255; green,160; blue,0}]
\tikzstyle{myBrown}=[draw={rgb,255: red,173; green,100; blue,82}]
\tikzstyle{myBrownFill}=[fill={rgb,255: red,173; green,100; blue,82}]
\tikzstyle{myOrange}=[draw={rgb,255: red,245; green,124; blue,0}]
\tikzstyle{myOrangeFill}=[fill={rgb,255: red,245; green,124; blue,0}]
\tikzstyle{myPurple}=[draw={rgb,255: red,178; green,121; blue,219}]
\tikzstyle{myPurpleFill}=[fill={rgb,255: red,178; green,121; blue,219}]
\tikzstyle{myPlum}=[draw={rgb,255: red,123; green,31; blue,162}]
\tikzstyle{myPlumFill}=[fill={rgb,255: red,123; green,31; blue,162}]
\tikzstyle{myTeal}=[draw={rgb,255: red,0; green,151; blue,167}]
\tikzstyle{map}=[myBlue, fill=white, shape=circle, thick, tikzit category=maps, minimum size=20pt, inner sep=1pt, tikzit draw={rgb,255: red,25; green,132; blue,232}]
\tikzstyle{correlation}=[map, myPink, tikzit draw={rgb,255: red,208; green,12; blue,133}, tikzit category=games]
\tikzstyle{game}=[map, myGreen, tikzit draw={rgb,255: red,104; green,159; blue,56}, tikzit category=games]
\tikzstyle{q-func}=[map, myPink, tikzit draw={rgb,255: red,208; green,12; blue,133}, tikzit category=games]
\tikzstyle{frob}=[fill=white, tikzit draw=black, shape=circle, very thick, tikzit category=frobenius, minimum size=4pt, inner sep=0pt]
\tikzstyle{x-frob}=[frob, myYellow, tikzit draw={rgb,255: red,255; green,160; blue,0}, tikzit category=frobenius]
\tikzstyle{dark-x-frob}=[frob, myYellow, myYellowFill, tikzit fill={rgb,255: red,255; green,160; blue,0}, tikzit draw={rgb,255: red,255; green,160; blue,0}, tikzit category=frobenius]
\tikzstyle{y-frob}=[frob, myBrown, tikzit draw={rgb,255: red,173; green,100; blue,82}, tikzit category=frobenius]
\tikzstyle{xy-frob}=[frob, myOrange, ultra thick, tikzit draw={rgb,255: red,245; green,124; blue,0}, tikzit category=frobenius]
\tikzstyle{half-xy-proj}=[xy-frob, shape=rectangle, minimum size=6pt, tikzit draw={rgb,255: red,245; green,124; blue,0}, tikzit category=frobenius, tikzit fill=white, tikzit shape=rectangle]
\tikzstyle{half-xy-proj-flipped}=[half-xy-proj, myOrangeFill, tikzit draw={rgb,255: red,245; green,124; blue,0}, tikzit category=frobenius, tikzit fill={rgb,255: red,245; green,124; blue,0}, tikzit shape=rectangle]
\tikzstyle{twisted-xy-proj}=[myOrange, left color=white, right color={rgb,255: red,245; green,124; blue,0}, tikzit draw={rgb,255: red,245; green,124; blue,0}, shape=rectangle, thin, tikzit category=frobenius, minimum size=8pt, inner sep=0pt, tikzit fill=white, tikzit shape=rectangle]
\tikzstyle{twisted-xy-proj-flipped}=[myOrange, right color=white, left color={rgb,255: red,245; green,124; blue,0}, tikzit draw={rgb,255: red,245; green,124; blue,0}, shape=rectangle, thin, tikzit category=frobenius, minimum size=8pt, inner sep=0pt, tikzit fill={rgb,255: red,245; green,124; blue,0}, tikzit shape=rectangle]
\tikzstyle{a-frob}=[frob, myPurple, tikzit draw={rgb,255: red,178; green,121; blue,219}, tikzit category=frobenius]
\tikzstyle{dark-a-frob}=[frob, myPurple, myPurpleFill, tikzit fill={rgb,255: red,178; green,121; blue,219}, tikzit draw={rgb,255: red,178; green,121; blue,219}, tikzit category=frobenius]
\tikzstyle{b-frob}=[frob, myBlue, tikzit draw={rgb,255: red,25; green,118; blue,210}, tikzit category=frobenius]
\tikzstyle{ab-frob}=[frob, myPlum, ultra thick, tikzit draw={rgb,255: red,123; green,31; blue,162}, tikzit category=frobenius]
\tikzstyle{half-ab-proj}=[ab-frob, shape=rectangle, minimum size=6pt, tikzit draw={rgb,255: red,123; green,31; blue,162}, tikzit category=frobenius, tikzit shape=rectangle]
\tikzstyle{half-ab-proj-flipped}=[half-ab-proj, myPlumFill, tikzit category=frobenius, tikzit draw={rgb,255: red,123; green,31; blue,162}, tikzit fill={rgb,255: red,123; green,31; blue,162}, tikzit shape=rectangle]
\tikzstyle{twisted-ab-proj}=[myPlum, left color=white, right color={rgb,255: red,123; green,31; blue,162}, tikzit draw={rgb,255: red,123; green,31; blue,162}, shape=rectangle, thin, tikzit category=frobenius, minimum size=8pt, inner sep=0pt, tikzit fill=white, tikzit shape=rectangle]
\tikzstyle{twisted-ab-proj-flipped}=[myPlum, right color=white, left color={rgb,255: red,123; green,31; blue,162}, shape=rectangle, thin, tikzit category=frobenius, minimum size=8pt, inner sep=0pt, tikzit draw={rgb,255: red,123; green,31; blue,162}, tikzit fill={rgb,255: red,123; green,31; blue,162}, tikzit shape=rectangle]
\tikzstyle{trap}=[fill=white, myPink, shape=trapezium, thick, tikzit shape=rectangle, trapezium left angle=90, trapezium right angle=60, minimum height=16pt, minimum width=16pt, inner sep=2pt, draw={rgb,255: red,25; green,132; blue,232}, tikzit category=maps]
\tikzstyle{conj}=[trap, trapezium left angle=60, trapezium right angle=90, tikzit draw={rgb,255: red,25; green,132; blue,232}, tikzit shape=rectangle, tikzit category=maps]
\tikzstyle{dagger}=[conj, shape border rotate=180, tikzit draw={rgb,255: red,25; green,132; blue,232}, tikzit shape=rectangle, tikzit category=maps]
\tikzstyle{transpose}=[trap, shape border rotate=180, tikzit draw={rgb,255: red,25; green,132; blue,232}, tikzit shape=rectangle, tikzit category=maps]
\tikzstyle{bra}=[myTeal, fill=white, tikzit draw={rgb,255: red,0; green,151; blue,167}, regular polygon, regular polygon sides=3, tikzit category=maps, inner sep=0pt, minimum size=20pt]
\tikzstyle{bra_s}=[myTeal, fill=white, tikzit draw={rgb,255: red,0; green,151; blue,167}, regular polygon, regular polygon sides=3, tikzit category=maps, inner sep=-1.25pt, minimum size=20pt]
\tikzstyle{bra_xs}=[myTeal, fill=white, tikzit draw={rgb,255: red,0; green,151; blue,167}, regular polygon, regular polygon sides=3, tikzit category=maps, inner sep=-1.75pt, minimum size=20pt]
\tikzstyle{bra_xxs}=[myTeal, fill=white, tikzit draw={rgb,255: red,0; green,151; blue,167}, regular polygon, regular polygon sides=3, tikzit category=maps, inner sep=-3pt, minimum size=1pt]
\tikzstyle{ket}=[bra, shape border rotate=180, tikzit draw={rgb,255: red,0; green,151; blue,167}, tikzit category=maps]
\tikzstyle{ket_s}=[{bra_s}, shape border rotate=180, tikzit draw={rgb,255: red,0; green,151; blue,167}, tikzit category=maps]
\tikzstyle{ket_xs}=[{bra_xs}, shape border rotate=180, tikzit draw={rgb,255: red,0; green,151; blue,167}, tikzit category=maps]
\tikzstyle{ket_xxs}=[{bra_xxs}, shape border rotate=180, tikzit draw={rgb,255: red,0; green,151; blue,167}, tikzit category=maps]
\tikzstyle{graph_effect}=[bra, thick, myGreen, tikzit draw={rgb,255: red,104; green,159; blue,56}, tikzit category=games, tikzit shape=rectangle]
\tikzstyle{graph_state}=[ket, thick, myGreen, tikzit draw={rgb,255: red,104; green,159; blue,56}, tikzit category=games, tikzit shape=rectangle]
\tikzstyle{map_multi}=[myGreen, thick, shape=rectangle, tikzit draw={rgb,255: red,25; green,132; blue,232}, tikzit shape=rectangle, minimum width=1.5cm, tikzit category=maps]
\tikzstyle{small_map_multi}=[{map_multi}, minimum width=1cm, tikzit category=maps, shape=rectangle, tikzit draw={rgb,255: red,25; green,132; blue,232}]
\tikzstyle{graph_state_small}=[{graph_state}, tikzit category=maps, tikzit shape=rectangle, tikzit draw={rgb,255: red,104; green,159; blue,56}, inner sep=-20pt]
\tikzstyle{hilbertnoarrow}=[tikzit draw={rgb,255: red,0; green,151; blue,167}, myTeal, thick]
\tikzstyle{linmap}=[minimum width=.7cm, fill=white, draw=black, shape=rectangle, tikzit shape=rectangle, tikzit draw=black, tikzit category=linalg, tikzit fill=white, minimum height=.7cm]
\tikzstyle{graph_state_medium}=[{graph_state}, tikzit shape=rectangle, tikzit draw={rgb,255: red,104; green,159; blue,56}, inner sep=-2pt]
\tikzstyle{graph_effect_medium}=[{graph_effect}, inner sep=-2pt, tikzit draw={rgb,255: red,104; green,159; blue,56}, tikzit shape=rectangle]
\tikzstyle{graph_effect_small}=[{graph_effect}, thick, myGreen, inner sep=-20pt, tikzit draw={rgb,255: red,104; green,159; blue,56}, tikzit shape=rectangle]
\tikzstyle{proj}=[fill=white, shape=rectangle, tikzit category=maps, myGreen]
\tikzstyle{cket}=[bra, shape=cut triangle down, tikzit draw={rgb,255: red,0; green,128; blue,128}, apex angle=110, minimum width=.95cm]
\tikzstyle{cbra}=[cket, tikzit draw={rgb,255: red,0; green,128; blue,128}, shape=cut triangle up]
\tikzstyle{widecket}=[cket, minimum width=1.35cm, apex angle=100]
\tikzstyle{ccket}=[cket, shape=cut triangle down alt, tikzit draw={rgb,255: red,0; green,128; blue,128}, tikzit shape=circle]
\tikzstyle{ccbra}=[cbra, shape=cut triangle up alt, tikzit draw={rgb,255: red,0; green,128; blue,128}, tikzit shape=circle]

\tikzstyle{hilbert}=[-, myTeal, thick, tikzit draw={rgb,255: red,0; green,151; blue,167}]
\tikzstyle{hilbert2}=[-, myGreen, thick, tikzit draw={rgb,255: red,148; green,210; blue,75}]
\tikzstyle{x-wire}=[-, myYellow, very thick, tikzit draw={rgb,255: red,255; green,160; blue,0}]
\tikzstyle{y-wire}=[-, myBrown, very thick, tikzit draw={rgb,255: red,173; green,100; blue,82}]
\tikzstyle{xy-wire}=[-, myOrange, line width=2pt, tikzit draw={rgb,255: red,245; green,124; blue,0}]
\tikzstyle{a-wire}=[-, myPurple, very thick, tikzit draw={rgb,255: red,178; green,121; blue,219}]
\tikzstyle{b-wire}=[-, myBlue, very thick, tikzit draw={rgb,255: red,25; green,118; blue,210}]
\tikzstyle{ab-wire}=[-, myPlum, tikzit draw={rgb,255: red,123; green,31; blue,162}, line width=2pt]
\tikzstyle{hilbertnoarrow}=[-, tikzit draw={rgb,255: red,0; green,151; blue,167}, thick, myTeal]
\tikzstyle{dirwire}=[-]
\tikzstyle{wire}=[-]
\tikzstyle{dirwire02}=[-, tikzit draw=magenta]

%% file: content/introduction.tex
Graph theory is one of the most versatile branches of discrete mathematics, with applications ranging from computer science and combinatorial optimisation to statistical physics and network science. Many of its central notions such as colouring, connectivity, independence, cliques, etc.~ have been studied for over two centuries \cite{diestel2025graph}. A natural question, motivated by the advent of quantum information theory, is whether these notions admit meaningful generalisations to a non-commutative setting, where the vertex set of a graph is replaced by a quantum set and the adjacency relation by a suitable operator-algebraic structure.

The origins of \emph{quantum graph theory} can be traced to the work of Weaver \cite{weaver2010quantumrelations} who introduced quantum graphs as special cases of \emph{quantum relations}, and the work of Duan, Severini, and Winter \cite{duan_zeroerror_2013}, who introduced the \emph{confusability graph} of a quantum channel as a non-commutative generalisation of Shannon's classical confusability graph \cite{shannon2003zero}. The former approach introduces a notion of relations on von Neumann algebras, defining a quantum graph as a reflexive, symmetric quantum relation. The latter approach considers operator spaces of the form $\lspan\{K_i^* K_j \mid i, j \in [m]\}$, where $K_1, \dots, K_m$ are the Kraus operators of a quantum channel. Such an operator space encodes the channel's confusability structure and plays the role of the edge set of a graph. This perspective opened the door to extending classical zero-error information theory to the quantum setting \cite{stahlke2015quantum}, and led to the discovery of striking phenomena such as superactivation of zero-error capacity \cite{duan_superactivation_2009, cubitt_superactivation_2011}.

A systematic mathematical framework for quantum graphs was subsequently developed from two complementary perspectives. On the operator-algebraic side, Weaver \cite{weaver2021quantum} expanded on the theory of quantum relations on von Neumann algebras. This approach, rooted in operator space theory, naturally extends classical notions of independent sets and cliques \cite{weaver2017quantum} and has led to quantum analogues of Ramsey's theorem. On the categorical side, Musto, Reutter, and Verdon \cite{musto_compositional_2018} proposed a compositional approach in which a quantum graph is defined as an \emph{adjacency operator} on a quantum set, the latter being formalised as a special symmetric $\dagger$-Frobenius monoid in the category of finite-dimensional Hilbert spaces. This formulation comes equipped with a powerful \emph{string diagrammatic calculus} \cite{heunen_categories_2019, vicary_categorical_2011} that makes many proofs visual and intuitive. In the finite-dimensional case, which is the setting of this paper, both approaches are equivalent.

These foundational works have initiated a rapidly growing body of research; we refer the reader to the survey by Daws \cite{daws2024quantum} for a comprehensive account of the different perspectives on quantum graphs and their interrelations. Graph-theoretic properties have been generalised to the quantum setting: colouring and chromatic numbers via non-local games \cite{brannan_quantumclassical_2022, ganesan_spectral_2023}, capacity bounds and Lov\'asz-type parameters via sandwich theorems \cite{boreland2021sandwich}, connectedness and algebraic connectivity via spectral and operator-algebraic methods \cite{courtney2025connectivity, matsuda_algebraic_2024}, and graph homomorphisms and isomorphisms via categorical and game-theoretic frameworks \cite{atserias_quantum_2019, brannan2020bigalois, goldberg_quantum_2024}. Classical graph constructions have also been ported to the quantum setting, including the Mycielski transformation \cite{bochniak2025quantum}. At the same time, the question of classifying quantum graphs and constructing explicit examples has received increasing attention. Gromada \cite{gromada_examples_2022} classified all quantum graphs on $M_2$, finding exactly four non-isomorphic simple quantum graphs, and further constructed examples from Hadamard matrices \cite{gromada2024quantum}; see also the related work of Matsuda \cite{matsuda2022classification}.

Despite this progress, the theory has until now lacked a supply of concrete, parametric families of quantum graphs for which graph-theoretic parameters can be computed analytically. Classical graph theory benefits enormously from explicit families (circulant graphs, Kneser graphs, Paley graphs, etc.) that serve as examples, counterexamples, and testing grounds for conjectures. Recent work of Wasilewski \cite{wasilewski2024quantum} on quantum Cayley graphs is a step in this direction. The present paper aims to fill this gap more broadly for quantum graphs.

\bigskip

The contributions of this work are twofold: We introduce and then thoroughly study new families of quantum graphs on $M_n$ that are invariant under the action of some classical matrix groups.

\begin{mybox}
    On the conceptual side, we introduce and study families of quantum graphs on $M_n$ that are invariant under the action of classical matrix groups, such as the unitary group, the orthogonal group, and their respective diagonal subgroups.
\end{mybox}

The guiding principle is the same as in classical graph theory: Just as considering smaller subgroups of the symmetric group $S_n$ yields larger and richer families of classical graphs, considering smaller subgroups of the unitary group $U(n)$ yields larger families of quantum graphs. Apart from the unitary group itself, we look at the orthogonal group $O(n)$ and the hyperoctahedral group $\operatorname{Hyp}(n)$, as well as the diagonal fragments of $U(n)$ and $O(n)$: the \emph{diagonal unitary group} $DU(n)$ and the \emph{diagonal orthogonal group} $DO(n)$. These satisfy the following inclusions.
\begin{equation*}
  U(n) \supseteq O(n) \supseteq \operatorname{Hyp}(n) \supseteq DO(n) \subseteq DU(n).
\end{equation*}
Requiring invariance under the full unitary group $U(n)$ yields only the complete quantum graph $K_n$ and the edgeless quantum graph $\compl{K}_n$, while the orthogonal group $O(n)$ additionally permits the \emph{symmetric} and \emph{antisymmetric quantum graphs} $\Gsym$ and $\Gasym$ (\cref{prop:qgraphs-as-xabc}). These are already genuinely quantum objects with no classical counterpart.

The diagonal unitary group $DU(n)$ and the diagonal orthogonal group $DO(n)$ are considerably smaller, and their invariant quantum graphs form the main object of study of this paper. Building on the characterisation of $DU(n)$- and $DO(n)$-invariant linear maps \cite{singh2021diagonal,nechita2021graphical}, we show that these quantum graphs are parametrised by three $n \times n$ matrices $A$, $B$, $C$ with matching diagonals, giving rise to quantum graphs denoted $X_{A, B, C}$ (\cref{prop:xabc-qgraph-char}). The conditions for $X_{A, B, C}$ to be a quantum graph decompose cleanly: $B$ must be a projector, and for all $i \neq j$ the $2 \times 2$ blocks
\begin{equation*}
  \begin{pmatrix}
    A_{ij} & C_{ij} \\
    C_{ji} & A_{ji}
  \end{pmatrix}
\end{equation*}
must be projectors. Each of the three matrices plays a very distinct role:
\begin{itemize}
  \item The matrix $A$ encodes a \emph{classical graph} on $n$ vertices. When $B$ and $C$ are trivial, the quantum graph $X_{A, \diag A}$ is precisely the embedding of a classical graph into $M_n$ (\cref{xa-qgraph-char}).
  \item The matrix $C$ introduces a new type of edges, which we call \emph{strange edges}, annotated with a phase $\theta \in [0, 2\pi)$. Together, $A$ and $C$ determine a classical model called the \emph{strange graph} $\mathfrak{G}(A, C)$, which can be thought of as a classical graph whose edges are either classical or strange (see \cref{ssec:classicalmodels}).
  \item The matrix $B$ is a \emph{purely quantum contribution}: a projector on $\C^n$ with no classical analogue, which can be thought of as attaching a subspace to the quantum graph.
\end{itemize}
This decomposition captures an increasing level of ``quantumness'' as one moves from purely classical graphs $X_{A, \diag A}$ through AB graphs $X_{A, B}$ (which are $DU(n)$-invariant) to full ABC graphs $X_{A, B, C}$ having $DO(n)$ symmetry.

\medskip

Our second main contribution is the computation of graph-theoretic parameters for these newly introduced quantum graphs.

\begin{mybox}
    On the concrete side, we compute graph-theoretic parameters for the newly introduced families of quantum graphs, such as connectedness, colouring, independent sets, and cliques.
\end{mybox}
A key structural observation underlying our results is a recurring \emph{splitting principle}: For each of the graph parameters we consider, the defining conditions decompose into independent conditions on the $(A, C)$ part and the $B$ part (\cref{prop:conncompindep} and analogues for colouring, independent sets, and cliques). This effectively reduces problems about quantum graphs to a combination of a classical graph problem determined by the strange graph $\mathfrak{G}(A, C)$, and a linear-algebraic problem involving the projector $B$. We highlight five of our most important results:
\begin{enumerate}
  \item \textbf{Connectedness.} For $n \geq 3$, a quantum graph $X_{A, B, C}$ is connected if and only if its strange graph $\mathfrak{G}(A, C)$ is connected (\cref{prop:connectedequivn3}). This equivalence can fail for $n = 2$: Isolated strange edges with phase $\pi$ give rise to quantum graphs with strictly more connected components than their strange graph (\cref{prop:conncompcounterex}).

  \item \textbf{Independence number.} The independence number of quantum graphs of the form $X_{A, \diag A, C}$ is completely determined by the strange graph: $\alpha(X_{A, \diag A, C}) = \alpha(\mathfrak{G}(A, C))$ (\cref{prop:indepnumdetbystrange}). For the purely quantum part, the independence number of $X_{\diag B, B}$ can be bounded in both directions in terms of the rank of $B$. It is also lower bounded by $\operatorname{EqRows}(B)$, the maximum number of equal rows of $B$ in the standard basis.

  \item \textbf{Non-colourability.} There exist quantum graphs that are not classically colourable at all (\cref{prop:xabnotcolourable}); this includes the complete quantum graph $K_n$ and the symmetric quantum graph $\Gsym$ for $n \geq 3$. In contrast, quantum graphs of the form $X_{\diag B, B}$ are always $n$-colourable (\cref{prop:xb-ncolourable}), showing that the obstruction to colourability arises from the interaction between $A$, $C$, and $B$.

  \item \textbf{Clique number.} The quantum clique number of the quantum graph induced by a classical graph $\omega(X_{A, \diag A})$ can differ dramatically from the classical clique number $\omega(A)$, in both directions. While $\omega(X_{A, \diag A}) \geq \omega(A) - 1$ always holds, the classical complete bipartite graph on $n$ vertices satisfies $\omega(A) = 2$ but $\omega(X_{A, \diag A}) \geq n/2$ (\cref{prop:cliquenumclassbipart}). On the other hand, the classical complete graph satisfies $\omega(X_{A, \diag A}) = n - 1 < n = \omega(A)$ (\cref{prop:classcomplomega}). The symmetric and antisymmetric quantum graphs both have clique number $\lceil n / 2 \rceil$.

  \item \textbf{The role of $B$.} The matrix $B$ contributes a form of quantum density that interacts non-trivially with the classical structure encoded by $A$ and $C$. For example, $\omega(X_{\diag B, B}) \leq \sqrt{\rk B + 1}$ (\cref{prop:clique-bound-XB}), while $X_{\diag B, B}$ always has $n$ connected components and is $n$-colourable.
\end{enumerate}

Most of the results presented in this paper are summarized in \cref{fig:qgraphexamples-with-props}, which classifies quantum graphs according to their underlying classical symmetry and gathers all the exact formulas and bounds for their graph properties.

\bigskip

The paper is organized as follows. \Cref{sec:preliminaries} introduces the necessary background on quantum sets, quantum graphs, and their diagrammatic calculus, as well as the classical matrix groups and graph-theoretic properties studied throughout the paper. \Cref{sec:groupactions} develops the theory of quantum graphs invariant under classical matrix groups, culminating in the parametrisation of $DU(n)$- and $DO(n)$-invariant quantum graphs by triples of matrices $(A, B, C)$. \Cref{sec:graphtheory} computes graph-theoretic parameters for the families of quantum graphs introduced in the previous section: connected components, chromatic number, independence number, and clique number. Finally, \cref{sec:conclusion} summarises our findings and discusses open problems.

%% file: content/prelims.tex
First, let us fix some general conventions. We assume all projectors to be orthogonal projectors: A linear map $P$ on a Hilbert space is a projector if $P^2 = P\adjoint = P$. We use $\identity$ to denote different identities. It may refer to the identity linear map on some arbitrary vector space, or explicitly to the identity matrix in $M_n$. It will always be clear from the context which is meant. When talking about classical graphs, we assume the vertex set to be $[n] := \{1, \dots, n\}$. We write $i \sim j$ if the vertices $i$ and $j$ are adjacent.

\subsection{Quantum Sets and String Diagrams}
We will view quantum graphs as adjacency operators on a quantum set of vertices. As can be motivated through Gelfand duality of sets (viewed as finite spaces with the discrete topology), a \emph{quantum set} is a finite-dimensional \Cstar-algebra. It is known \cite[Theorem 4.7]{vicary_categorical_2011} that the category of \Cstar-algebras is equal to the category of \emph{special symmetric $\dagger$-Frobenius monoids} in $\fdHilb$.

\begin{definition}
  A \emph{monoid} $X$ in $\fdHilb$ is a finite-dimensional Hilbert space equipped with a linear map $m \colon X \otimes X \to X$, called \emph{multiplication}, and a linear map $u\colon \C \to X$, called \emph{unit}, satisfying
  \begin{enumerate}
    \item $m \circ (\id \otimes m) = m \circ (m \otimes \id)$ (associativity),
    \item $m \circ (\id \otimes u) = m \circ (u \otimes \id) = \id$ (unitality).
  \end{enumerate}
\end{definition}

Now the category $\fdHilb$ is canonically equipped with a contravariant endofunctor $\dagger$ that is the identity on objects, and maps a linear map $f\colon X \to Y$ between Hilbert spaces $X, Y$ to its adjoint $f\adjoint \colon Y \to X$. In particular, a monoid in $\fdHilb$ is automatically equipped with a \emph{comultiplication} $m\adjoint \colon X \to X \otimes X$ and a \emph{counit} $u\adjoint \colon X \to \C$. They satisfy the adjoints of conditions (1) and (2), called coassociativity and counitality.

\begin{definition}
  A \emph{$\dagger$-Frobenius monoid} $X$ is a monoid satisfying the \emph{Frobenius property},
  \begin{enumerate}
    \item $(\id \otimes m) \circ (m\adjoint \otimes \id) = m\adjoint \circ m = (m \otimes \id) \circ (\id \otimes m\adjoint)$.
  \end{enumerate}
  It is called
  \begin{enumerate}[resume]
    \item \emph{special} if $m \circ m\adjoint = \id$,
    \item \emph{symmetric} if $\sigma \circ m\adjoint \circ u = m\adjoint \circ u$,
  \end{enumerate}
  where $\sigma$ is the swap map.
\end{definition}

The advantage of working in a monoidal category like $\fdHilb$ is that we get access to a powerful graphical calculus that allows us to reason formally by manipulating \emph{string diagrams}. This approach of working with quantum sets and quantum graph was first introduced by Musto, Reutter, and Verdon in \cite{musto_compositional_2018}, where we also refer the reader for a more detailed introduction. Further exposition of the theory of string diagrams and their applications can be found in \cite{heunen_categories_2019,vicary_categorical_2011}.
The primitives of the calculus are \emph{strings} (or \emph{wires}) and \emph{boxes}.
\begin{equation*}
  \input{diagrams/string.tikz} \qquad\quad \input{diagrams/box.tikz}
\end{equation*}
Strings represent objects, while boxes represent morphisms. In our case the strings represent (elements of) Hilbert spaces, while boxes are linear maps. Composition $g \circ f$ of functions is represented by serial composition of diagrams, while tensor products $f \otimes h$ are represented by parallel composition. Diagrams are read from bottom to top.
\begin{equation*}
  \input{diagrams/composition.tikz} \qquad\qquad \input{diagrams/tensorprod.tikz}
\end{equation*}
The monoidal unit, $\C$ in the case of $\fdHilb$, is represented by the empty diagram. Linear maps $\C \to X$ are thus represented by a box that only has an outgoing wire. For reasons that will become apparent later, we generally represent such maps by a triangle with a corner cut off.
\begin{equation}\label{eq:ctox}
  \input{diagrams/ctox.tikz}
\end{equation}
The multiplication and unit maps of monoids are represented as fusion of wires and dots respectively.
\begin{equation*}
  \input{diagrams/mult.tikz} \qquad\qquad \input{diagrams/unit.tikz}
\end{equation*}
Taking adjoints in the diagrammatic calculus corresponds to mirroring the diagram across a horizontal axis. As such, the comultiplication and counit are represented as follows.
\begin{equation*}
  \input{diagrams/comult.tikz} \qquad\qquad \input{diagrams/counit.tikz}
\end{equation*}
The Frobenius property, for instance, can thus be expressed graphically as
\begin{equation*}
  \input{diagrams/frob-cond.tikz}\enspace.
\end{equation*}
A final important class of maps are the cups and caps.
\begin{equation*}
  \input{diagrams/cap.tikz} \qquad\qquad \input{diagrams/cup.tikz}
\end{equation*}
These are the duality morphisms for $X$, and their definition in terms of (co)multiplication and (co)unit shows that $\dagger$-Frobenius monoids are self-dual.\footnote{Otherwise, we would have to annotate our strings with arrows, as is done in \cite{vicary_categorical_2011}.} For our purposes, it suffices to know that they allow us to define the transpose of a linear map.

\begin{definition}\label{def:transpose}
  Let $f\colon X \to Y$ be a linear map between two $\dagger$-Frobenius monoids. Its \emph{transpose} is the map $f\transpose \colon Y \to X$ defined as
  \begin{equation}\label{eq:transpose}
    \input{diagrams/transpose.tikz}\enspace.
  \end{equation}
\end{definition}

The power of string diagrams is that they can be used in lieu of symbolic equations to prove equalities. Concretely, every equation that can be derived in the diagrammatic calculus by moving and bending strings, sliding boxes along wires, and rotating boxes also holds in $\fdHilb$. This can be stated formally.

\begin{theorem}[Theorem 3.28 in \cite{heunen_categories_2019}]
  In a pivotal category, a well-formed equation between morphisms follows from the axioms if and only if it holds in the graphical calculus up to planar oriented isotopy.
\end{theorem}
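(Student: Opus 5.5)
This is the coherence theorem for pivotal categories, and we plan to prove it by the standard Joyal--Street route. We compare the free pivotal category on a given signature with a category of string diagrams taken up to planar oriented isotopy, and show that the two are isomorphic. Both directions of the ``if and only if'' then follow at once. Two expressions built from the generators are equal by the axioms exactly when they are equal in the free pivotal category. By the isomorphism, this happens exactly when their diagrams are isotopic.

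\textbf{Step 1 (the diagram category).} Fix a signature: object generators (wire labels) and morphism generators (boxes with typed input and output wires). Define $\mathcal{D}$ to have as objects finite words of oriented object labels. Its morphisms are isotopy classes of progressive-up-to-cups-and-caps planar diagrams, in which boxes carry a marked side so that rotation is recorded, and wires carry orientations. Composition is vertical stacking, the tensor product is horizontal juxtaposition, and duals reverse orientation. The unit and counit of the duality are the cup and cap, and the pivotal structure is the canonical identification of a wire with its double dual. Checking that $\mathcal{D}$ is a pivotal category is routine. Each axiom (the snake equations, naturality of the pivotal structure, and the agreement of left and right duals) is witnessed by an explicit planar isotopy. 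This is the \emph{soundness} half: every equation derivable from the axioms holds in the calculus.

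\textbf{Step 2 (the universal property).} Show that $\mathcal{D}$ is free. Given any pivotal category $\mathcal{C}$ and an interpretation of the signature in $\mathcal{C}$, we must produce a unique strict pivotal functor $\mathcal{D} \to \mathcal{C}$ extending it. Uniqueness is clear, because every diagram is generated by boxes, identities, cups and caps under composition and tensor product. For existence, choose a generic representative of each diagram, meaning one in general position with respect to the height function. Slice it into horizontal layers, each containing a single box, cup or cap, and interpret each layer in $\mathcal{C}$.

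\textbf{Step 3 (the main obstacle).} The hard part is well-definedness: isotopic diagrams must receive the same value. We plan to prove this with a Cerf-theory or Reidemeister-type argument. Any planar isotopy between generic diagrams can be perturbed into a finite sequence of elementary moves:
\begin{itemize}
\item changing the heights of two distant elements (interchange, which holds by functoriality of $\otimes$);
\item creating or cancelling a cup--cap pair (the snake equations);
\item sliding a box around a cup or cap, i.e.\ rotating a box by $2\pi$ (the pivotal condition that the double dual of $f$ equals $f$ under the pivotal isomorphism).
\end{itemize}
Each move must be checked against the axioms. The delicate point is the full rotation of a box. Here we would first verify that the left and right transposes defined via cups and caps coincide, which is exactly what pivotality guarantees, and then conclude that a $2\pi$ rotation acts trivially. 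Care is needed to use only planar isotopy, so that no crossings arise, since the category is not assumed braided.

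Combining the three steps gives an isomorphism between $\mathcal{D}$ and the free pivotal category on the signature, which yields completeness and hence the theorem. In our setting $\fdHilb$ is pivotal, and $\dagger$-Frobenius monoids are self-dual as noted above, so the theorem applies to all the diagrammatic manipulations used later.
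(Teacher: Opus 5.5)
The paper gives no proof of this theorem---it is quoted as a black box from Heunen--Vicary---and your outline (identify diagrams modulo planar oriented isotopy with the free pivotal category, and check well-definedness on interchange, snake and box-rotation moves) is the standard Joyal--Street argument behind that citation, so the approaches agree. Three points need tightening. First, all of the real work sits in the Cerf-type reduction of isotopies to elementary moves, which you only assert. Second, sliding a box around a single cup or cap is a $\pi$-rotation, not the $2\pi$ move: it holds by the definition of the transpose on one side, and pivotality is needed only to make it agree with the other side, which is what then gives the $2\pi$ case. Third, for a non-strict target such as $\fdHilb$ you must interpret into a strictification (or use strong pivotal functors) rather than ask for a strict pivotal functor.
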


A pivotal category is a category with duals in which every object is naturally isomorphic to its double dual. Clearly, $\fdHilb$ is a pivotal category, so the theorem applies. Theorems of this form are often called \emph{coherence theorems}, and they also exist for other types of categories, cf. \cite{selinger_survey_2010}. One such planar oriented isotopy would be to consider the right-hand side of \cref{eq:transpose} and pulling the bent wires taut. This will result in a 180 degree rotation of the $f$ box. We conclude that taking the transpose of a linear map corresponds to rotating the corresponding diagram by 180 degrees. By using non-symmetric boxes for maps, we may thus get rid of superscripts to represent transposes and adjoints. We let
\begin{equation*}
  \input{diagrams/nonsym-func.tikz}~, \qquad \input{diagrams/nonsym-func-transpose.tikz}~, \qquad \input{diagrams/nonsym-func-adjoint.tikz}~, \qquad \input{diagrams/nonsym-func-conj.tikz}~.
\end{equation*}
The last equation corresponds to the conjugate of a linear map, defined as $\overline{f} = (f\adjoint)\transpose = (f\transpose)\adjoint$. The corresponding operation in the diagrammatic calculus must thus be mirroring across a horizontal axis, followed by a 180 degree rotation (or vice versa). This corresponds to mirroring across a vertical axis. Despite being useful, this asymmetric notation has the tendency to introduce visual clutter. Since we will rarely need adjoints or transposes of maps $X \to X$, we reserve this asymmetric notation for maps $\C \to X$, see \cref{eq:ctox}.

\begin{example}[Classical Sets]
  By Gelfand duality, the classical set with $n$ elements corresponds to the quantum set $\C^n$. This becomes a special symmetric $\dagger$-Frobenius monoid as follows. We choose an orthonormal basis $\ket{1}, \dots, \ket{n}$, the \emph{standard basis}, and define $m$ as the linear extension of
  \begin{equation*}
    \C^n \otimes \C^n \to \C^n, \quad \ket{i} \otimes \ket{j} \mapsto \delta_{ij} \ket{i}.
  \end{equation*}
  In other words, the multiplication on $\C^n$ is componentwise. The unit with respect to this multiplication must then be given by
  \begin{equation*}
    \C \to \C^n, \quad c \mapsto c \mathbf{1} ,
  \end{equation*}
  where
  \begin{equation*}
    \mathbf{1} \coloneqq \sum_{k=1}^n \ket{k}.
  \end{equation*}
  From this, the comultiplication can be determined to be the linear extension of
  \begin{equation*}
    \C^n \to \C^n \otimes \C^n, \quad \ket{i} \mapsto \ket{i} \otimes \ket{i},
  \end{equation*}
  while the counit is the sum-of-entries map $\C^n \to \C$. Note that the comultiplication only copies elements of the standard basis; this is the \emph{no-cloning theorem}. Equivalently, we could have started with the comultiplication and defined the standard basis as precisely the elements that are copied by the comultiplication, cf. \cite[Theorem 5.1]{coecke_new_2013}.
  It is not hard to verify using the above definitions that standard basis elements must be self-conjugate, $\overline{\ket{i}} = (\ket{i}\adjoint)\transpose = \ket{i}$. Recalling that taking the conjugate of a linear map corresponds to mirroring the diagram across a vertical axis, we want the boxes representing self-conjugate maps to have this symmetry. We thus represent standard basis elements as full triangles, as opposed to the cut triangles of \cref{eq:ctox}.
  \begin{equation*}
    \input{diagrams/stdbasis.tikz}
  \end{equation*}

  Finally, given some linear map $f\colon \C^n \to \C^n$, one may verify that the transpose as defined in \cref{def:transpose} is precisely given by the transpose of the matrix representation of $f$ in the standard basis.
\end{example}

In this paper, we will mainly be interested in one particular family of quantum sets, the \emph{full matrix algebras} $M_n(\C)$ for $n \geq 2$. These are obviously objects of $\fdHilb$ and they form a $\dagger$-Frobenius monoid by setting $u(c) = c\identity$, $m(X \otimes Y) = XY$.
They also admit a refinement of the string diagrammatic formalism. Namely, there is an isomorphism $M_n \cong \C^n \otimes \C^n$. This means that instead of representing $M_n$ as a single wire, we may represent it as two parallel $\C^n$ wires. The isomorphism is then given graphically as
\begin{equation*}
  M_n \to \C^n \otimes \C^n,\qquad \input{diagrams/mn-cncn-iso.tikz}\enspace.
\end{equation*}
This is sometimes called the ``vectorisation map'' and corresponds to stacking the columns of $x$ on top of each other. The multiplication and unit maps are then given by
\begin{equation*}
  \input{diagrams/mn-cncn-mult.tikz}, \qquad\qquad \input{diagrams/mn-cncn-unit.tikz}\enspace.
\end{equation*}
Concretely, this is the \emph{endomorphism monoid} of $\C^n$, cf. \cite[Definition 3.16]{vicary_categorical_2011}. Endomorphism monoids always satisfy the Frobenius property, and it is a good exercise to verify that $M_n$ is symmetric since $\C^n$ is. However, defined this way $M_n$ is not special. We have
\begin{equation*}
  \input{diagrams/comult-mult-in-endomonoid.tikz}\enspace.
\end{equation*}
This is closely related to so called $\delta$-forms, cf. e.g. \cite[Section 1]{gromada_examples_2022}. We can make $M_n$ special by normalising $u, u\adjoint, m, m\adjoint$. In fact, there are multiple ways to accomplish this. The normalisation most common in the context of string diagrams is
\begin{equation*}
  \tilde{m} = \frac{1}{\sqrt{n}} m, \qquad \tilde{u} = \sqrt{n} u, \qquad \tilde{m}\adjoint = \frac{1}{\sqrt{n}} m\adjoint, \qquad \tilde{u}\adjoint = \sqrt{n} u\adjoint.
\end{equation*}
This normalisation preserves the property that taking the adjoint corresponds to simply mirroring the diagram.

In what follows, we will generally suppress these normalisation factors. This is because the normalisation will lead to factors in unexpected places, at the cost of readability. Since we will mostly be working in the diagrammatic calculus, this will have almost no impact on our arguments. We only need to remember the normalisation when we want to compute scalars.

\subsection{Quantum Graphs}

There are many different equivalent ways of defining quantum graphs. We will be interested in two of them. Firstly, a quantum graph is an adjacency operator on a quantum set of vertices.

\begin{definition}[cf. Definition V.1 in \cite{musto_compositional_2018}]\label{def:qgraphsadj}
  A \emph{quantum graph} is a tuple $(X, G)$, where $X$ is a quantum set and $G\colon X \to X$ is a linear map satisfying
  \begin{equation*}
    \input{diagrams/adj-op-schur-idemp.tikz} \qquad\quad\text{and}\quad\qquad \input{diagrams/adj-op-real.tikz}
  \end{equation*}
\end{definition}

The two properties abstract the defining properties of adjacency matrices. In the classical case, where wires are $\C^n$, the first condition says that $G$ is idempotent under the entrywise (Schur) product. The second condition says that $G$ is self-conjugate, or in the classical case that its matrix representation in the standard basis has real entries.\footnote{Note that in the classical case, the second property is redundant, since being idempotent under the Hadamard product already requires the matrix entries in the standard basis to be in the set $\{0, 1\}$. However, adding the realness condition leads to a more coherent theory.} When the quantum set $X$ is clear from the context, we will also identify a quantum graph $G$ with its adjacency operator.

\begin{definition}
  A quantum graph $(X, G)$
  \begin{enumerate}
    \item is \emph{undirected} if
      \begin{equation*}
        \input{diagrams/adj-op-undirected.tikz}
      \end{equation*}
    \item has \emph{no loops} if
      \begin{equation*}
        \input{diagrams/adj-op-irreflexive.tikz}
      \end{equation*}
    \item has \emph{loops at every vertex} if
      \begin{equation*}
        \input{diagrams/adj-op-reflexive.tikz}~.
      \end{equation*}
  \end{enumerate}
\end{definition}

\begin{remark}
  Recall that we will generally suppress the normalisation of multiplication and comultiplication that makes the $\dagger$-Frobenius monoid $M_n$ special. This will have an impact on what constitutes an adjacency operator of a quantum graph. However, it is straightforward to recover the adjacency operators for special $\dagger$-Frobenius monoids: We simply multiply the adjacency operator by $n$. Indeed, let $A$ be a valid adjacency operator in the non-normalised setting, that is
  \begin{equation*}
    m \circ (A \otimes A) \circ m\adjoint = A.
  \end{equation*}
  Let $A' = nA$. The realness condition is invariant under multiplication by real scalars. Moreover, per definition we get
  \begin{equation*}
    \tilde{m} \circ (A' \otimes A') \circ \tilde{m}\adjoint = \frac{1}{n} m \circ (nA \otimes nA) \circ m\adjoint = nA = A',
  \end{equation*}
  so $A'$ is an adjacency operator in the normalised setting. The same reasoning applies to the undirectedness and loop conditions.
\end{remark}

The second important characterisation of quantum graphs will be in terms of operator spaces. From now on we will only consider quantum graphs on $M_n$. The isomorphism $M_n \cong \C^n \otimes \C^n$ allows us to define the \emph{realignment} map,
\begin{equation*}
  (-)^R \colon \End(M_n) \to \End(M_n), \quad \input{diagrams/realignment.tikz}\enspace,
\end{equation*}
which swaps the bottom-right and top-left tensor legs of a linear map $M_n \to M_n$. In the quantum information literature, the realignment map is used for entanglement detection \cite{rudolph2000separability,rudolph2003cross}, and the resulting map is called the \emph{Choi matrix} \cite{choi1975completely}; see also \cite[Section 10.2]{bengtsson2006geometry}. It is easy to see that the realignment is an involution, that is, $(F^R)^R = F$. We can thus give an equivalent definition of quantum graphs by taking the realignment of the conditions in \cref{def:qgraphsadj}.

\begin{proposition}\label{prop:adjoprealignedisproj}
  A linear map $G$ is the adjacency operator of a quantum graph if and only if $G^R$ is a projector.
  \begin{proof}
    Taking the realignment of the Schur idempotency condition and writing it in terms of $G^R$ yields
    \vspace{-.3cm}
    \begin{equation*}
      \input{diagrams/schur-idemp-realigned-lhs.tikz}\quad = \scalebox{.85}{\input{diagrams/schur-idemp-realigned.tikz}}\enspace = \quad\input{diagrams/schur-idemp-realigned-rhs.tikz}\enspace,
    \end{equation*}
    % \vspace{-.3cm}
    where the second equality follows from isomorphism of diagrams. The realness condition becomes
    \begin{equation*}
      \input{diagrams/real-realigned.tikz}\enspace.
    \end{equation*}
    These are precisely the conditions $(G^R)^2 = G^R$ and $(G^R)\adjoint = G^R$.
  \end{proof}
\end{proposition}

Every projector on $M_n$ uniquely defines a subspace $S \subseteq M_n$, so quantum graphs can equivalently be viewed as \emph{operator spaces} \cite{paulsen2002completely}. This is a perspective that is often taken in the operator algebra- and the quantum information literature. The latter is because one can naturally associate to a quantum channel $\Phi$ with Kraus operators $K_1, \dots, K_m$ the operator space $\lspan \{K_i\adjoint K_j \mid i,j \in [m] \}$, called the \emph{quantum confusability graph} of $\Phi$. This generalises classical confusability graphs \cite{shannon2003zero}, extending the theory of Shannon capacities to the quantum case \cite{duan_zeroerror_2013}.

Analogous to the proof of \cref{prop:adjoprealignedisproj} one may translate the conditions for undirectedness and having (no) loops into the operator space picture. This yields the following characterisation, see also \cite{weaver2021quantum}.

\begin{proposition}\label{prop:qgraphpropsopspace}
  Let $G$ be a quantum graph and $S = \img G^R$ the operator space associated to it. Then
  \begin{enumerate}
    \item $G$ is undirected if and only if $X \in S \implies X\adjoint \in S$,
    \item $G$ has loops at every vertex if and only if $\identity \in S$.
    \item $G$ has no loops if and only if $X \in S \implies \Tr(X) = 0$.
  \end{enumerate}
\end{proposition}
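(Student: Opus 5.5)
We argue as in the proof of \cref{prop:adjoprealignedisproj}: we realign each defining diagram and read off what it says about $P := G^R$ and its image $S = \img P$. The algebraic fact we rely on is this. In the $\C^n \otimes \C^n$ picture, realignment sends an operator $G$ on $M_n$ to $P = G^R$, and since $P$ is a projector we may write $P = \sum_k \ket{X_k}\bra{X_k}$ for an orthonormal basis $\{X_k\}$ of $S$ (orthonormal in the Hilbert--Schmidt inner product). Reversing the realignment, $G$ is then, up to the suppressed normalisation, the map $Y \mapsto \sum_k X_k Y X_k\adjoint$ (or the analogous form with transposes, depending on which legs are swapped). Each condition on $G$ is therefore a condition on the Kraus-type family $\{X_k\}$, and through it on $S$.

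\emph{Undirectedness.} Realigning the undirectedness diagram, which expresses $G$ equal to its adjoint (equivalently its transpose, given realness), gives an equation of the form $P = \Theta \circ P \circ \Theta$. Here $\Theta$ is the antiunitary involution $X \mapsto X\adjoint$ on $M_n$, which diagrammatically is the swap of the two $\C^n$ wires composed with conjugation. For the projector $P$ onto $S$, the operator $\Theta P \Theta$ is the projector onto $\Theta(S) = S\adjoint$. So undirectedness says exactly $S = S\adjoint$, which is equivalent to $X \in S \implies X\adjoint \in S$.

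\emph{Loops.} The condition ``loops at every vertex'' says that $m \circ (G \otimes \id) \circ m\adjoint$ equals the identity (suitably normalised). Realigning turns the identity of $M_n$ into the rank-one projector onto the vectorised identity, $\ket{\identity}\bra{\identity}/n$. The left-hand side becomes a partial contraction of $P$ that in the end says $P\ket{\identity} = \ket{\identity}$, that is, $\identity \in S$. The condition ``no loops'' says the same composite is $0$. After realignment this is $P\ket{\identity} = 0$, that is, $S \perp \identity$, and since $\langle \identity, X\rangle = \Tr X$ this means $\Tr X = 0$ for all $X \in S$.

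\emph{Main obstacle.} The main obstacle is the diagram bookkeeping in the loop case. We must check that the realigned composite really reduces to $P$ applied to the vectorised identity (a cup), rather than to a partial trace of $P$. We also need to track the normalisation factors $\sqrt{n}$, which affect the fixed-point equation $P\ket{\identity} = \ket{\identity}$ in the reflexive case but not the vanishing condition in the loop-free case. We would handle this by computing in coordinates with $G(Y) = \sum_k X_k Y X_k\adjoint$. Then $m \circ (G \otimes \id) \circ m\adjoint$ is proportional to $Y \mapsto \sum_k \Tr(X_k\adjoint)\, X_k Y$, which is proportional to $Y \mapsto P(\identity)\, Y$. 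This is $0$ if and only if $P(\identity) = 0$, and it is the (normalised) identity if and only if $P(\identity) = \identity$, confirming both loop statements.
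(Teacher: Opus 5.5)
Your proposal is correct and follows the route the paper indicates: realign each defining condition exactly as in the proof of \cref{prop:adjoprealignedisproj}, so that undirectedness becomes $P = \Theta P \Theta$ (the projector onto $S\adjoint$), and both loop conditions reduce to $P$ evaluated on the vectorised identity. Your Kraus-form computation is a sound way to do the bookkeeping the paper leaves implicit. In the unnormalised convention it gives $m \circ (G \otimes \id) \circ m\adjoint = L_{P(\identity)}$ with constant exactly $1$, so no normalisation factors need tracking; depending on the vectorisation convention $P(\identity)$ may appear with an adjoint, but that affects neither $P(\identity) = 0$ nor $P(\identity) = \identity$.
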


As is usual in classical graph theory, we will restrict our attention to \emph{simple} quantum graphs, that is undirected quantum graphs without loops. Note that in the literature, many authors instead work in a setting where quantum graphs have loops at every vertex. This setting is more natural in the context of confusability graphs, cf. \cite{duan_zeroerror_2013}. However, just as for classical graphs, there is a natural correspondence between loopless quantum graphs and quantum graphs with loops at every vertex, which is a direct corollary of \cref{prop:qgraphpropsopspace}.

\begin{proposition}
  Let $S$ be the operator space of a loopless quantum graph. Then $S \oplus \C\identity$ is the operator space of a quantum graph with loops at every vertex. Conversely, for every operator space $S'$ of a quantum graph with loops at every vertex, $S' \ominus \C\identity$ is the operator space of a loopless quantum graph.
\end{proposition}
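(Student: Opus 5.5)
The approach is to work entirely in the operator-space picture, using \cref{prop:adjoprealignedisproj} and \cref{prop:qgraphpropsopspace}. A quantum graph on $M_n$ corresponds to a subspace $S \subseteq M_n$, namely the image of the projector $G^R$. Loops and undirectedness are then read off from $S$: the graph is undirected iff $S$ is closed under $\dagger$, loopless iff every element of $S$ is traceless, and has loops at every vertex iff $\identity \in S$. Since every subspace is the image of a unique orthogonal projector, and realignment is an involution, any subspace with these closure properties defines a quantum graph. So both directions reduce to checking properties of subspaces.

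For the first direction, let $S$ be the operator space of a loopless quantum graph, so every $X \in S$ satisfies $\Tr X = 0$. Under the Hilbert--Schmidt inner product, $\langle \identity, X\rangle = \Tr X = 0$, hence $S \perp \C\identity$ and the sum $S \oplus \C\identity$ is orthogonal. Its projector is $P_S + P_{\C\identity}$, which is again a projector. The resulting subspace contains $\identity$, so by \cref{prop:qgraphpropsopspace}(2) the associated quantum graph has loops at every vertex. If undirectedness is intended to be preserved as well, it is: $\identity\adjoint = \identity$, so $S \oplus \C\identity$ is $\dagger$-closed whenever $S$ is.

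For the converse, let $S'$ contain $\identity$ and set $S = S' \ominus \C\identity = \{X \in S' : \Tr X = 0\}$, whose projector is $P_{S'} - P_{\C\identity}$. This difference is a projector because $\C\identity \subseteq S'$. Every element of $S$ is traceless, so by \cref{prop:qgraphpropsopspace}(3) the corresponding quantum graph is loopless. Undirectedness is again preserved, since $\Tr(X\adjoint) = \overline{\Tr X}$. The two operations are mutually inverse, which gives the claimed correspondence.

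The only step needing care is the orthogonality $S \perp \C\identity$. It relies on the fact that the inner product on $M_n$ coming from the Frobenius structure is the Hilbert--Schmidt one, up to the suppressed normalisation. A positive scalar factor does not affect orthogonality, so this causes no trouble.
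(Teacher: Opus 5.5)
Your proof is correct and follows the paper's route. The paper states this proposition as a direct corollary of \cref{prop:qgraphpropsopspace}. Its proof of the equivalent \cref{prop:looplessallloopsequiv} uses the same observation you make: looplessness means $\img G^R \perp \C\identity$. Adding (resp.\ subtracting) the rank-one projector onto $\identity$, which is $\identity^R$, therefore gives a projector with image $S \oplus \C\identity$ (resp.\ $S' \ominus \C\identity$).
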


This is equivalent to the following statement about adjacency operators, which directly generalises the classical case.

\begin{proposition}\label{prop:looplessallloopsequiv}
  Let $G$ be a loopless quantum graph. Then $G + \identity$ is a quantum graph with loops at every vertex. Conversely, for a quantum graph $H$ with loops at every vertex, $H - \identity$ is a loopless quantum graph.
  \begin{proof}
    By \cref{prop:qgraphpropsopspace}, $G^R$ projects onto a subspace orthogonal to $\C\identity$. This means that $G^R + \Omega\Omega\adjoint$ is a projector, where $\Omega\Omega\adjoint$ is the rank-$1$ projector onto the identity matrix $\identity \in M_n$.
    Letting $S = \img G^R$, this projects precisely onto $S \oplus \C\identity$. At the same time, the realignment of the identity on $M_n$ is given by
    \begin{equation*}
      \input{diagrams/identity-realigned.tikz}\enspace,
    \end{equation*}
    which is precisely $\Omega\Omega\adjoint$. It follows that $(G + I)^R = G^R + \Omega\Omega\adjoint$ as desired. Similarly, $H^R$ projects onto a subspace $S'$ with $\identity \in S'$. This implies that $(H - I)^R = H^R - \Omega\Omega\adjoint$ is a projector with image $S' \ominus \C\identity$.
  \end{proof}
\end{proposition}

\subsection{Properties of Quantum Graphs}\label{ssec:graphprops}

Most fundamental definitions of classical graph theory have been generalised to the quantum case. In this paper, we focus on four concepts: connected components, colouring, independent sets, and cliques.

\paragraph{Connected Components.} Let $G$ be a quantum graph. Courtney, Ganesan, and Wasilewski \cite{courtney2025connectivity} define $G$ to be \emph{connected} if
\begin{equation*}
  L_p G = G L_p \quad\Implies\quad p \in \{0, \identity\}
\end{equation*}
for all projectors $p \in M_n$, where $L_p$ denotes left multiplication by $p$. Note that this is equivalent to saying that $G$ is \emph{disconnected} if there exists a non-trivial projector $p$ such that
\begin{equation*}
  L_pGL_{p^\bot} = 0,
\end{equation*}
where $p^\bot$ is the projector onto $\ker p$. Left multiplication can be expressed graphically in a simple way:
\begin{equation*}
  \input{diagrams/leftmul-simplified.tikz}
\end{equation*}
We can thus restate the definition of connectedness nicely in graphical terms.
\begin{definition}
  A quantum graph $G$ is \emph{disconnected} if there exists a non-trivial projector $P\colon \C^n \to \C^n$ such that
  \begin{equation}\label{eq:conndef}
    \input{diagrams/conn-cond-mn.tikz}.
  \end{equation}
  Otherwise $G$ is called \emph{connected}.
\end{definition}

As mentioned in \cite{courtney2025connectivity}, $L_p$ should be thought of as the projector onto a connected component of $G$. This immediately suggests the following definition for connected components.

\begin{definition}
  A quantum graph $G$ has (at least) $k$ \emph{connected components} if there exist projectors $P_1, \dots, P_k$ on $\C^n$ with $\sum_s P_s = \identity$ such that
  \begin{equation*}
    \input{diagrams/conn-comp-condition-mn.tikz}
  \end{equation*}
  for all $s \neq t$.
\end{definition}

Observe that for $k = 1$ the condition is satisfied vacuously, so a connected graph still has at least $1$ connected component. Conversely, if $G$ is disconnected then there exists a projector $P$ satisfying \cref{eq:conndef}. In this case the choice $P_1 = P, P_2 = P^\bot$ witnesses that $G$ has at least two connected components.

\paragraph{Colouring.} A definition of graph colouring for quantum graphs was introduced by Brannan, Ganesan, and Harris \cite{brannan_quantumclassical_2022} in the context of non-local games. As common in the non-local games literature, they actually introduce multiple versions of colouring that differ in strength based on how correlated the answers of players in a certain non-local game are allowed to be. We will only consider \emph{classical} colourings, which correspond to classical winning strategies in the colouring game. Translated into our language, the definition reads as follows.

\begin{definition}[cf. Definition 2.15 in \cite{ganesan_spectral_2023}]
  Let $G$ be a quantum graph and $S = \img G^R$ its associated operator space. Then $G$ has a \emph{$k$-colouring} if there exist projections $P_1, \dots, P_k$ with $\sum_s P_s = \identity$ such that
  \begin{equation*}
    P_sXP_s = 0
  \end{equation*}
  for all $X \in S$ and $s \in [k]$.
\end{definition}

We can express this condition graphically as
\begin{equation*}
  \input{diagrams/colouring-cond.tikz},
\end{equation*}
which has to hold for all $x \in M_n$ and $s \in [k]$. We can get rid of the $x$ and write the condition in terms of the adjacency operator, which yields
\begin{equation*}
  \input{diagrams/colouring-cond-rewr.tikz} \quad\qquad\IFF\qquad\quad \input{diagrams/colouring-cond-rewr-2.tikz}.
\end{equation*}
Finally, we can take the conjugate of both sides, which recovers an equivalent condition analogous to the one for connected components.
\setcounter{theorem}{\value{theorem}-1}
\let\oldthedefinition\thedefinition
\renewcommand{\thedefinition}{\oldthedefinition'}
\begin{definition}
  A quantum graph $G$ has a \emph{$k$-colouring} if there exist projectors $P_1, \dots, P_k$ on $\C^n$ with $\sum_s P_s = \identity$ such that
  \begin{equation*}
    \input{diagrams/colouring-cond-mn.tikz}
  \end{equation*}
  for all $s \in [k]$.
\end{definition}
\let\thedefinition\oldthedefinition

The \emph{chromatic number} $\chi(G)$ of a quantum graph $G$ is defined as the smallest $k$ such that $G$ has a $k$-colouring.

\paragraph{Independent Sets.}
There are multiple possible generalisations of independent sets to quantum graphs---cf. \cite[Proposition 2]{duan_zeroerror_2013}---that are more or less natural depending on whether one approaches the problem from the perspective of classical graph theory, or operationally from the perspective of quantum channels and their confusability graphs. In this paper, we will define an independent set as follows.

\begin{definition}[cf. Definition 1.1 in \cite{weaver2017quantum}]\label{def:indepsets}
  Let $S \subseteq M_n$ be the operator space corresponding to a quantum graph $G$. A \emph{$k$-independent set} of $G$ is a projector $P \in M_n$ of rank $k$ such that $PSP \subseteq \C P$. The \emph{independence number} $\alpha(G)$ is the largest $k$ such that $G$ has a $k$-independent set.
\end{definition}

Note that our definition slightly differs from that given by Weaver in \cite{weaver2017quantum} and Duan, Severini, and Winter in \cite{duan_zeroerror_2013}. There they require that $PSP = \C P$, which is precisely the Knill-Laflamme error correction condition \cite{knill1997theory}. This means that if $S$ represents the confusability graph of a quantum channel, then an independent set is precisely a code that allows for zero-error communication. The reason why we modify this definition is that for confusability graphs, $S$ is an operator \emph{system}, that is it is closed under adjoints and, crucially, contains the identity. This means that $PSP$ always contains at least $P \identity P = P$, and thus $\C P \subseteq PSP$. We work in a more general framework, where quantum graphs are arbitrary operator spaces. Since we want to work with loopless quantum graphs, our definition needs to account for the possibility that there is no $X \in S$ such that $PXP = P$. This yields the condition $PSP \subseteq \C P$. Graphically, we can express \cref{def:indepsets} as follows, where $G^R$ is the projector onto the operator space and $c_x \in \C$ is a (potentially zero) scalar.
\begin{equation*}
  \input{diagrams/indepset-cond.tikz} \quad\qquad\IFF\qquad\quad \input{diagrams/indepset-cond-rewr.tikz}
\end{equation*}
The condition has to hold for all $x \in M_n$.

\paragraph{Cliques.}
To define cliques, we again stick to the definition given in \cite{weaver2017quantum}. While other natural definitions exist, this one has been studied in-depth and for example permits a generalisation of Ramsey's theorem to quantum graphs, as proved in the above-mentioned paper. Just as for independent sets, we adapt Weaver's definition to account for quantum graphs without loops.

\begin{definition}[cf. Definition 1.1 in \cite{weaver2017quantum}]\label{def:cliques}
  Let $S \subseteq M_n$ be the operator space corresponding to a quantum graph $G$. A \emph{$k$-clique} of $G$ is a projector $P \in M_n$ of rank $k$, such that $P(S \oplus \C\identity)P = PM_nP$. The \emph{clique number} $\omega(G)$ is the largest $k$ such that $G$ has a $k$-clique.
\end{definition}

%% file: diagrams/string.tikz
\begin{tikzpicture}
	\begin{pgfonlayer}{nodelayer}
		\node [style=none] (0) at (0, -0.75) {};
		\node [style=none] (2) at (0, 0.75) {};
	\end{pgfonlayer}
	\begin{pgfonlayer}{edgelayer}
		\draw [style=x-wire] (0.center) to (2.center);
	\end{pgfonlayer}
\end{tikzpicture}

%% file: diagrams/box.tikz
\begin{tikzpicture}
	\begin{pgfonlayer}{nodelayer}
		\node [style=map] (0) at (0, 0) {$f$};
		\node [style=none] (1) at (0, 0.75) {};
		\node [style=none] (2) at (0, -0.75) {};
	\end{pgfonlayer}
	\begin{pgfonlayer}{edgelayer}
		\draw [style=x-wire] (2.center) to (0);
		\draw [style=a-wire] (0) to (1.center);
	\end{pgfonlayer}
\end{tikzpicture}

%% file: diagrams/ctox.tikz
\begin{tikzpicture}
	\begin{pgfonlayer}{nodelayer}
		\node [style=cket] (0) at (0, -0.25) {$x$};
		\node [style=none] (1) at (0, 0.5) {};
	\end{pgfonlayer}
	\begin{pgfonlayer}{edgelayer}
		\draw [style=x-wire] (0) to (1.center);
	\end{pgfonlayer}
\end{tikzpicture}

%% file: diagrams/unit.tikz
\begin{tikzpicture}
	\begin{pgfonlayer}{nodelayer}
		\node [style=x-frob] (0) at (0, -0.25) {};
		\node [style=none] (2) at (0, 0.25) {};
	\end{pgfonlayer}
	\begin{pgfonlayer}{edgelayer}
		\draw [style=x-wire] (0) to (2.center);
	\end{pgfonlayer}
\end{tikzpicture}

%% file: diagrams/counit.tikz
\begin{tikzpicture}
	\begin{pgfonlayer}{nodelayer}
		\node [style=x-frob] (0) at (0, 0.25) {};
		\node [style=none] (2) at (0, -0.25) {};
	\end{pgfonlayer}
	\begin{pgfonlayer}{edgelayer}
		\draw [style=x-wire] (0) to (2.center);
	\end{pgfonlayer}
\end{tikzpicture}

%% file: diagrams/stdbasis.tikz
\begin{tikzpicture}
	\begin{pgfonlayer}{nodelayer}
		\node [style=ket] (0) at (0, -0.25) {$i$};
		\node [style=none] (1) at (0, 0.5) {};
	\end{pgfonlayer}
	\begin{pgfonlayer}{edgelayer}
		\draw [style=x-wire] (0) to (1.center);
	\end{pgfonlayer}
\end{tikzpicture}

%% file: content/groupactions.tex
To construct our examples of quantum graphs, we turn our attention to group actions. Classically, subgroups of the symmetric group $S_n$ act on a classical graph by permuting its vertices. Concretely, given such a subgroup $X \subseteq S_n$, realised as $n \times n$ permutation matrices, and a classical graph $G$ with adjacency matrix $A \in \{0, 1\}^{n \times n}$, we say that $X$ acts on $G$ if $PA = AP$ for all $P \in X$. Now it is easy to see that if $X = S_n$, the only loopless graphs that $X$ acts on are the complete graph $A = J - \identity$ and the empty graph $A = 0$. As one considers smaller and smaller $X \subseteq S_n$, one obtains larger and larger classes of graphs. The cyclic group $Z_n \subseteq S_n$, for instance, yields precisely the circulant graphs. We apply this reasoning to quantum graphs on $M_n$, where the unitary group $U(n)$ takes the role of $S_n$. Natural choices of subgroups $X \subseteq U(n)$ will yield different classes of quantum graphs, which have in common that they can be thought of in discrete terms and are amenable to study in the diagrammatic calculus.

We begin by establishing what it means for a classical matrix group to act on a quantum graph. Recall that just as in the classical case, a quantum graph is a (quantum) set of vertices together with some extra structure. In particular, a group acting on a quantum graph should first and foremost act on the set of vertices.
 Note that we have $M_n \cong \End(\C^n)$. This means that there is already a very natural action of subgroups of $\GL(\C^n) \subseteq \End(\C^n)$---conjugation. However, since we view $M_n$ as a \Cstar-algebra, we require the group action to preserve the \Cstar-involution. In the case of $M_n$ this is the Hermitian adjoint, so we must restrict to the unitary group.

\begin{observation}
  A subgroup of $U(n)$ acts on $M_n$ by conjugation, that is, $\alpha\colon U(n) \times M_n \to M_n$ defines a group action with $\alpha(g, x) = g x g\adjoint$.
\end{observation}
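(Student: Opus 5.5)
The claim is that $\alpha(g,x) = gxg\adjoint$ defines a group action of (any subgroup of) $U(n)$ on $M_n$, and moreover one compatible with the \Cstar-structure that defines $M_n$ as a quantum set. The plan is to check the group action axioms directly, and then, as the substantive part, to check that each $\alpha(g,-)$ is an automorphism of the special symmetric $\dagger$-Frobenius monoid $M_n$.

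First, the action axioms follow from associativity of matrix multiplication. For the identity, $\alpha(\identity, x) = x$. For compatibility, $\alpha(g, \alpha(h, x)) = g h x h\adjoint g\adjoint = (gh)x(gh)\adjoint = \alpha(gh, x)$. Each map $\alpha(g,-)$ is linear in $x$, so we get a homomorphism $U(n) \to \GL(M_n)$. Restricting to a subgroup is immediate.

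Second, we check compatibility with the structure. Each $\alpha(g,-)$ preserves multiplication, since $gxg\adjoint\, gyg\adjoint = gxyg\adjoint$ because $g\adjoint g = \identity$. It preserves the unit, since $g\identity g\adjoint = \identity$. It preserves the involution, since $(gxg\adjoint)\adjoint = gx\adjoint g\adjoint$.

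It also preserves the Hilbert space structure, since $\Tr((gxg\adjoint)\adjoint gyg\adjoint) = \Tr(x\adjoint y)$. So $\alpha(g,-)$ is unitary on $M_n$. This in turn gives preservation of the comultiplication and counit: for a unitary $\phi$ we have $\phi\adjoint = \phi^{-1}$, and taking adjoints of $m \circ (\phi\otimes\phi) = \phi\circ m$ yields $(\phi\otimes\phi)\circ m\adjoint = \phi \circ m\adjoint$.

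Diagrammatically, under $M_n \cong \C^n\otimes\C^n$ the map $\alpha(g,-)$ is $g \otimes \overline{g}$ acting on the two wires. In this picture the multiplication preservation is just the cup–cap cancellation $g\transpose \overline{g}$-type identity, i.e.\ $\overline{g}$ rotated into $g\adjoint$ meeting $g$.

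The only real subtlety is the remark in the text that the action must preserve the \Cstar-involution, which is where unitarity rather than mere invertibility is needed. Indeed, for general $g \in \GL(\C^n)$ one has $(gxg^{-1})\adjoint \neq gx\adjoint g^{-1}$. This is the step to emphasise, but it is a one-line check.
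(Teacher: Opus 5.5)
Your proposal is correct and matches what the paper intends: the paper states this observation without proof, justifies the restriction from $\GL(\C^n)$ to $U(n)$ by the need to preserve the \Cstar-involution, and then uses the same $g\otimes\overline{g}$ form. Your extra structure-preservation checks go beyond what the statement needs, and they contain one slip: dualising multiplicativity, by applying the adjoint to the identity for $\phi^{-1}=\alpha(g\adjoint,-)$, gives $(\phi\otimes\phi)\circ m\adjoint = m\adjoint\circ\phi$, not the ill-typed $\phi\circ m\adjoint$.
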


This group action takes a particularly nice graphical form.
\begin{equation*}
  \input{diagrams/groupaction-simplified.tikz}
\end{equation*}

Now just as in the classical case, we say that a group acts on a quantum graph if the adjacency matrix commutes with the group action on the underlying set.

\begin{definition}
  Let $G$ be a quantum graph. A group $X \subseteq U(n)$ \emph{acts on $G$} if
  \begin{equation*}
    (x \otimes \overline{x}) \circ G = G \circ (x \otimes \overline{x})
  \end{equation*}
  for all $x \in X$.
\end{definition}

\subsection{The Unitary and the Orthogonal Group}\label{sec:UO-symmetry}

Let us now investigate which graphs are acted on by different subgroups of $U(n)$, starting with $U(n)$ itself.

\begin{lemma}\label{lem:unitary-invariant-maps}
  The linear maps invariant under the group action of $U(n)$ are precisely those of the form
  \begin{equation*}
    \input{diagrams/unitary-invariant-maps.tikz}\enspace.
  \end{equation*}
  \begin{proof}
    The statement follows from an application of Schur-Weyl duality \cite[Chapter 9]{goodman2009symmetry}. A linear map $F \colon \C^n \otimes \C^n \to \C^n \otimes \C^n$ satisfies the group action condition for the unitary group if and only if
    \begin{equation*}
      \input{diagrams/groupaction-cond-unitary.tikz}
    \end{equation*}
    for all $u \in U(n)$. Taking the partial transpose of both sides and sliding boxes along wires yields
    \begin{equation*}
      \input{diagrams/groupaction-cond-unitary-ptrnsp.tikz}\enspace.
    \end{equation*}
    Schur-Weyl duality now tells us that this is satisfied if and only if $F^\Gamma$ is a linear combination of the identity and the swap map,
    \begin{equation*}
      \input{diagrams/unitary-invariant-ptrnsp.tikz}\enspace.
    \end{equation*}
    The identity is invariant under the partial transpose, and the partial transpose of the flip map is the rank-$1$ projection onto the identity matrix. Consequently, taking the partial transpose of both sides yields the desired result.
  \end{proof}
\end{lemma}

   This result is a slight generalisation of the well-known fact that the unitary covariant quantum channels are precisely the \emph{depolarizing channels}, see \cite{werner1989quantum,keyl1999optimal}, or \cite[Proposition 3.1]{nechita2025random} for a modern treatment. Now we have determined the linear maps that are invariant under the group action, but not every linear map $F \colon M_n \to M_n$ is the adjacency operator of a quantum graph. For that, the linear map has to be Schur idempotent and real. In order to classify the quantum graphs that $U(n)$ acts on, we need to determine the range of parameters $\alpha$, $\beta$ that lead to $F$ satisfying these properties. However, instead of treating $F$ as an adjacency operator, it will be more convenient to view it as the projector onto an operator space. In this case, we have to determine when $F$ is idempotent and self-adjoint, which is much easier to verify. As the following lemma shows, we can start from the same conditions.

\begin{lemma}\label{lem:projinvcond}
  Let $X \subseteq U(n)$ be a group and let $\alpha_x \colon M_n \to M_n$ be the action of a group element $x \in X$ on $M_n$. Then for any $F \colon M_n \to M_n$, $\alpha_x F \alpha_x\adjoint = F$ if and only if $\alpha_x F^R \alpha_x\adjoint = F^R$.
  \begin{proof}
    We know that $\alpha_x$ is of the form $x \otimes \overline{x} \in M_n \otimes M_n$. The statement thus follows by sliding boxes along wires.
    \begin{equation*}
      \input{diagrams/realignment-preserves-groupaction.tikz} \qedhere
    \end{equation*}
  \end{proof}
\end{lemma}

The conditions that make a linear map the adjacency operator of a loopless quantum graph restrict the possible values of the complex parameters $\alpha$ and $\beta$ from \cref{lem:unitary-invariant-maps} to only two possible values: $\alpha = \beta = 0$, leading to the zero map, and $\alpha = -1/n$, $\beta = 1$, leading to the map whose realignment is the projector onto the space of traceless matrices. The former is the empty graph $\compl{K}_n$, while the latter is known as the (irreflexive) quantum complete graph on $M_n$, which we denote by $K_n$.

\begin{proposition}\label{prop:unitary-invariant-qgraphs}
  The quantum graphs on $M_n$ acted on by $U(n)$ are precisely the edgeless graph $\compl{K}_n$ and the complete graph $K_n$.
  \begin{proof}
    We will determine which projectors correspond to quantum graphs acted on by $U(n)$, the adjacency operators are then obtained by realigning. By \cref{lem:unitary-invariant-maps,lem:projinvcond}, we may assume that the projectors are of the form
    \begin{equation*}
      \input{diagrams/unitary-invariant-maps-alt.tikz}\enspace.
    \end{equation*}
    Recall that the realignment of the identity is the rank-$1$ projector onto the identity matrix and vice versa, which simply swaps the roles of $\alpha$ and $\beta$ in this case. Composing a linear map of this form with itself yields
    \begin{equation*}
      \input{diagrams/unitary-invariant-maps-squared.tikz}\enspace,
    \end{equation*}
    so for the map to be an idempotent, it needs to hold that
    \begin{align*}
      \beta &= \beta^2\\
      \alpha &= 2\alpha\beta + \alpha^2n.
    \end{align*}
    The first condition requires that $\beta \in \{0, 1\}$. If $\beta = 0$, then $\alpha = 0$ or $\alpha = 1/n$. If $\beta = 1$, then $\alpha = 0$ or $\alpha = -1/n$. All these values are real, so they all make the map self-adjoint. It follows that they all correspond to quantum graphs. However, by plugging in these values and realigning, one finds that only the choices $\alpha = \beta = 0$ and $\alpha = -1/n, \beta = 1$ yield an adjacency operator that satisfies the looplessness condition. These choices correspond precisely to $\compl{K}_n$ and $K_n$.
  \end{proof}
\end{proposition}

We recover in this way the simplest possible quantum graphs on $M_n$. To obtain more interesting examples, one can consider the action of other subgroups of $U(n)$; the first such candidate is the orthogonal group $O(n)$.

\begin{lemma}
  The linear maps invariant under the group action of $O(n)$ are precisely those of the form
  \begin{equation*}
    \input{diagrams/orthogonal-invariant-maps.tikz}\enspace.
  \end{equation*}
  \begin{proof}
    The statement also follows from Schur-Weyl duality \cite[Chapter 10]{goodman2009symmetry}. In the orthogonal case, the role of the symmetric group is replaced by the Brauer algebra. A graphical basis of the Brauer algebra $\mathfrak{B}_2$ consists of all perfect matchings of the classical complete graph on $4$ vertices, which are precisely the components of the linear combination above.
  \end{proof}
\end{lemma}

This result is again closely related to an existing result in the quantum information literature. Namely that the space spanned by $O(n)$ covariant quantum channels is spanned by three maps: the identity channel, the completely depolarizing channel, and the transpose map, see \cite{werner1989quantum,eggeling2001separability} or \cite[Proposition 3.1]{nechita2025random}. Completely analogous to \cref{prop:unitary-invariant-qgraphs} one can determine the values of $\alpha, \beta, \gamma$ that make the linear map the adjacency operator of a loopless quantum graph. Choosing $\gamma = 0$ recovers the $U(n)$-invariant quantum graphs, and we obtain two additional quantum graphs for $\gamma = \pm 1/2$. We omit the (straightforward, but tedious) proof.

\begin{proposition}
  The quantum graphs on $M_n$ acted on by $O(n)$ are precisely $\compl{K}_n$, $K_n$, and the following two graphs:
  \begin{itemize}
    \item the \emph{antisymmetric quantum graph} $\Gasym$, having as realignment the projector onto the antisymmetric subspace of $\C^n \otimes \C^n$
    \item the \emph{symmetric quantum graph} $\Gsym$, having as realignment the projector onto the orthogonal complement of the identity inside the symmetric subspace of $\C^n \otimes \C^n$.
  \end{itemize}
  Diagrammatically, these quantum graphs are given by:
  \begin{equation*}
    \Gasym ~=~~ \input{diagrams/gasym.tikz} \qquad\&\qquad \Gsym ~=~~ \input{diagrams/gsym.tikz}\enspace.
  \end{equation*}
\end{proposition}

It was shown in \cite{gromada_examples_2022} that there exist precisely four non-isomorphic simple quantum graphs on $M_2$, which are uniquely determined by their number of edges. It turns out that these are exactly the graphs acted on by the orthogonal group $O(2)$.

\begin{proposition}
  There are four non-isomorphic quantum graphs on $M_2$. They are the edgeless graph $\compl{K}_2$, $\Gasym$, $\Gsym$, and the complete graph $K_2$.
  \begin{proof}
    The number of edges of a quantum graph $G$ is given by $u\adjoint G u$. We show that $\compl{K}_2$, $\Gasym$, $\Gsym$, and $K_2$ have $0$, $4$, $8$, and $12$ edges respectively. Clearly $\compl{K}_2$ has $0$ edges. The rest can be seen by a diagrammatic calculation. Note that we have to include the normalisation constants that we suppressed.
    \begin{equation*}
      \input{diagrams/ex-asym-edgecount.tikz} \quad = 2^3 - 2^2 = 4, \qquad\quad \input{diagrams/ex-sym-edgecount.tikz} \quad = 2^3 + 2^2 - 2^2 = 8,
    \end{equation*}
    \begin{equation*}
      \input{diagrams/ex-complete-qgraph-edgecount.tikz} \quad = 2^4 - 2^2 = 12.
    \end{equation*}
    This implies that they are pairwise non-isomorphic and thus must be the four graphs in question.
  \end{proof}
\end{proposition}

\subsection{The Diagonal Unitary and the Diagonal Orthogonal Group}

One way to make the matrix groups even smaller and hence obtain more quantum graphs, is to look at their diagonal fragments. This gives rise to the \emph{diagonal unitary group} $DU(n)$ and the \emph{diagonal orthogonal group} $DO(n)$, which consist of diagonal unitary and orthogonal matrices respectively. The matrices in $DO(n)$ in particular are exactly the diagonal matrices that have $\pm 1$ on the main diagonal. In particular the group is finite!
The linear maps on $M_n$ that satisfy the corresponding relation have been characterised in \cite[Propositions 6.4 and 7.2]{nechita2021graphical}, where they are called CLDUI (\textbf{c}onjugate \textbf{l}ocal \textbf{d}iagonal \textbf{u}nitary \textbf{i}nvariant) and LDOI (\textbf{l}ocal \textbf{d}iagonal \textbf{o}rthogonal \textbf{i}nvariant) respectively. The CLDUI maps are given by
\begin{equation*}
  \input{diagrams/diag-unitary-invariant-maps.tikz}\enspace,
\end{equation*}
while the LDOI maps are given by
\begin{equation*}
  \input{diagrams/diag-orthogonal-invariant-maps.tikz}\enspace.
\end{equation*}
The $A, B, C$ are matrices in $M_n$ such that $\diag A = \diag B = \diag C$, and we write for arbitrary matrices $X$
\begin{equation*}
  \mathring{X} := X - \diag X.
\end{equation*}
Again, we have to restrict to the cases where the linear map corresponds to a quantum graph, and again this will be easiest if we view these maps as as projectors instead of adjacency operators. To relate the conditions to adjacency, we make use of the following lemma.

\begin{lemma}[Proposition 4.3 in \cite{singh2021diagonal}]
  For all $A, B, C \in M_n$ we have $(X_{A, B, C})^R = X_{B, A, C}$.
\end{lemma}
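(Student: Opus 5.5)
The identity is linear in $(A,B,C)$, and realignment $(-)^R$ is a linear involution on $\End(M_n)\cong\End(\C^n\otimes\C^n)$. My plan is therefore to expand $X_{A,B,C}$ in the standard basis of $\C^n\otimes\C^n$ as a sum of rank-one operators $\ket{ab}\bra{cd}$. I will then compute the realignment of each term and regroup.

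\textbf{Effect of realignment on a matrix unit.} First I fix what $(-)^R$ does to one basis operator. It swaps the top-left and bottom-right tensor legs, so
\begin{equation*}
  \bigl(\ket{a}\ket{b}\bra{c}\bra{d}\bigr)^R = \ket{d}\ket{b}\bra{c}\bra{a}.
\end{equation*}
This is a pure relabelling of indices, read off the defining diagram of the realignment. As a sanity check, the identity $\sum_{a,b}\ket{ab}\bra{ab}$ is sent to $\sum_{a,b}\ket{bb}\bra{aa}=\Omega\Omega\adjoint$. This agrees with the computation in the proof of \cref{prop:looplessallloopsequiv}.

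\textbf{Expansion of $X_{A,B,C}$.} Next I read the standard-basis expansion off the LDOI diagram; the three pieces are exactly those of \cite{nechita2021graphical,singh2021diagonal}:
\begin{equation*}
  X_{A,B,C} = \sum_{i,j} A_{ji}\,\ket{jj}\bra{ii} \;+\; \sum_{i\neq j} B_{ij}\,\ket{ij}\bra{ij} \;+\; \sum_{i\neq j} C_{ij}\,\ket{ji}\bra{ij}.
\end{equation*}
\begin{itemize}
  \item The $A$-part is the copy/Frobenius piece, acting as $\ket{ii}\mapsto\sum_j A_{ji}\ket{jj}$ on vectorised diagonal matrices.
  \item The $B$-part is the Schur multiplication by $\mathring B$.
  \item The $C$-part is the Schur multiplication by $\mathring C$ composed with the transpose.
\end{itemize}
The diagonal contributions are carried entirely by the $A$-part. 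Since $\diag A=\diag B=\diag C$, these terms are $\sum_i A_{ii}\ket{ii}\bra{ii}$, which can equally be attributed to any of the three pieces.

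\textbf{Realigning each part.} Now I apply the rule termwise.
\begin{itemize}
  \item $A$-part: $\ket{jj}\bra{ii}\mapsto\ket{ij}\bra{ij}$, so it becomes the Schur-multiplier piece with coefficients from $A$.
  \item $B$-part: $\ket{ij}\bra{ij}\mapsto\ket{jj}\bra{ii}$, so it becomes the copy piece with coefficients from $B$.
  \item $C$-part: $\ket{ji}\bra{ij}\mapsto\ket{ji}\bra{ij}$, so it is fixed.
  \item Diagonal terms $\ket{ii}\bra{ii}$: fixed.
\end{itemize}
Regrouping, $(X_{A,B,C})^R$ has the shape of an LDOI map with $A$ and $B$ interchanged and $C$ untouched, which is the claim $X_{B,A,C}$.

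\textbf{Main obstacle.} The only real difficulty is bookkeeping of conventions. One must check that the index order in the $A$-term (whether $A_{ij}$ or $A_{ji}$) matches the one in the $B$-term after realignment, so that the result is $X_{B,A,C}$ and not $X_{B\transpose,A\transpose,C}$. I would settle this by evaluating both sides on a single input $\ket{k}\ket{l}$ directly in the diagrams, using that the diagram of $X_{A,B,C}$ is drawn so that realignment literally moves the $A$-box into the $B$-position. Failing that, I would cite \cite[Proposition 4.3]{singh2021diagonal} for the convention-matched statement.
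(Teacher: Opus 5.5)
Your route (expand in the standard basis, realign term by term, regroup) is the natural direct verification. The paper itself gives no argument and simply imports the statement from \cite[Proposition 4.3]{singh2021diagonal}. However, the computation as you wrote it does not give the claim.

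Apply your own rule $\ket{a}\ket{b}\bra{c}\bra{d}\mapsto\ket{d}\ket{b}\bra{c}\bra{a}$ to your own expansion. The $A$-part $\sum_{i,j}A_{ji}\ket{jj}\bra{ii}$ goes to $\sum_{i,j}A_{ji}\ket{ij}\bra{ij}$, but in your expansion the Schur piece of $X_{B,A,C}$ is $\sum_{i\neq j}A_{ij}\ket{ij}\bra{ij}$. The $B$-part $\sum_{i\neq j}B_{ij}\ket{ij}\bra{ij}$ goes to $\sum_{i\neq j}B_{ij}\ket{jj}\bra{ii}$, but the copy piece of $X_{B,A,C}$ is $\sum_{i,j}B_{ji}\ket{jj}\bra{ii}$. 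So your termwise computation actually establishes $(X_{A,B,C})^R = X_{B\transpose,A\transpose,C}$. This differs from the lemma as soon as $A$ or $B$ is not symmetric, and the lemma is stated for arbitrary $A,B,C$. Your regrouping step asserts away exactly the obstacle you flagged in your last paragraph. Your identity sanity check cannot detect the problem, because your rule and its conjugate by the tensor swap both send the identity to $\Omega\Omega\adjoint$.

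The mismatch comes from mixing two vectorisation conventions. Writing Schur multiplication by $\mathring{B}$ as $\sum_{i\neq j}B_{ij}\ket{ij}\bra{ij}$ presupposes row stacking, $\ket{i}\bra{j}\mapsto\ket{i}\ket{j}$. Your realignment rule, however, exchanges the copy and Schur pieces without a transpose only under column stacking, $\ket{i}\bra{j}\mapsto\ket{j}\ket{i}$, which is what the paper says its vectorisation does. Fix one convention and the argument closes.

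With column stacking, the Schur piece is $\sum_{i\neq j}B_{ji}\ket{ij}\bra{ij}$, and $Z\mapsto\mathring{C}\odot Z\transpose$ is $\sum_{k\neq l}C_{lk}\ket{kl}\bra{lk}$. Your rule then:
\begin{itemize}
  \item sends $A_{ji}\ket{jj}\bra{ii}$ to $A_{ji}\ket{ij}\bra{ij}$, the Schur piece with matrix $A$;
  \item sends $B_{ji}\ket{ij}\bra{ij}$ to $B_{ji}\ket{jj}\bra{ii}$, the copy piece with matrix $B$;
  \item fixes every $\ket{kl}\bra{lk}$ and $\ket{ii}\bra{ii}$.
\end{itemize}
This gives exactly $X_{B,A,C}$. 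Equivalently, you can keep your row-stacking expansion and use the standard realignment $\ket{a}\ket{b}\bra{c}\bra{d}\mapsto\ket{a}\ket{c}\bra{b}\bra{d}$. The rest of your argument is fine: linearity, and the diagonal terms being fixed and consistently attributable because $\diag A=\diag B=\diag C$.
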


The conditions that make $X_{A, B, C}$ a quantum graph can now be stated nicely in terms of $A, B, C$.

\begin{proposition}\label{prop:xabc-qgraph-char}
  A linear map of the form $X_{A, B, C}$ is the adjacency operator of a quantum graph if and only if
  \begin{enumerate}
    \item $B$ is a projector,
    \item $\begin{pmatrix}
        A_{ij} & C_{ij} \\
        C_{ji} & A_{ji}
      \end{pmatrix}$ is a projector for all $i \neq j \in [n]$.
  \end{enumerate}
  It is undirected if and only if
  \begin{enumerate}[resume]
    \item $\mathring{A}$ is self-adjoint,
    \item $B = B\transpose$,
  \end{enumerate}
  It has no loops if and only if
  \begin{enumerate}[resume]
    \item $\mathbf{1} \in \ker B$.
  \end{enumerate}
  \begin{proof}
    Recall that the projector corresponding to an adjacency matrix is obtained by realigning. In order to show that $X_{A, B, C}$ is the adjacency operator of a quantum graph, we thus have to show that $(X_{A, B, C})^R = X_{B, A, C}$ is a projector. Now it can be shown, cf. \cite[Equation 41]{nechita2021graphical}, that
    \begin{equation}\label{eq:xbac-directsum}
      X_{B, A, C} = B \oplus \bigoplus_{i < j} \begin{pmatrix}
        A_{ij} & C_{ij} \\
        C_{ji} & A_{ji}
      \end{pmatrix}.
    \end{equation}
    Both the adjoint and multiplication are componentwise on the direct summands, so  $X_{B, A, C}$ is a  projector if and only if $B$ and all $AC$-blocks are.

    To derive the conditions for undirectedness, recall that a quantum graph with adjacency operator $G$ is undirected if $G = G\transpose$. Since $G$ is required to be real, this is equivalent to $G$ being self-adjoint. Taking the adjoint of $X_{A, B, C}$ yields
    \begin{equation*}
      \input{diagrams/xabc-adj.tikz}\enspace,
    \end{equation*}
    where we used that $B$ is a projector and hence self-adjoint. This is equal to $X_{A, B, C}$ if and only if both $\mathring{A}\adjoint = \mathring{A}$ and $\mathring{C}\adjoint = \mathring{C}$, as well as
    \begin{equation*}
      \input{diagrams/xabc-adj-b-case.tikz}\enspace.
    \end{equation*}
    The latter holds if and only if $B = B\transpose$: We have
    \begin{equation*}
      \input{diagrams/xabc-adj-b-case-condderiv.tikz}\enspace,
    \end{equation*}
    where the second equality holds by isomorphism of diagrams, or more explictly by the Frobenius equations. The converse can be seen by plugging units in the bottom left and top right. The self-adjointness condition for $\mathring{C}$ is satisfied automatically: As projectors, the $AC$-blocks in \cref{eq:xbac-directsum} have to be self-adjoint, which implies that $C_{ij} = \overline{C_{ji}}$ for all $i < j \in [n]$. This is equivalent to $\mathring{C}\adjoint = \mathring{C}$.

    Finally, recall that a quantum graph with adjacency operator $G$ has no loops if and only if
    \begin{equation*}
      \input{diagrams/adj-op-irreflexive.tikz}.
    \end{equation*}
      On $M_n$, the condition becomes
    \begin{equation*}
      \input{diagrams/noloops-cond-mn.tikz}\enspace.
    \end{equation*}
    For $X_{A, B, C}$ not to have loops, we thus want
    \begin{equation*}
      \input{diagrams/xabc-noloops-cond.tikz},
    \end{equation*}
    which is equivalent to
    \begin{equation*}
      \input{diagrams/xabc-noloops-cond-rephrased.tikz}.
    \end{equation*}
    The $A$ and the $C$ terms are the diagrams for the Schur products $\mathring{A} \bullet \identity$ and $\identity \bullet \mathring{C}\transpose$ respectively, which are both $0$ since $\mathring{A}$ and $\mathring{C}$ have zero diagonal by definition. The $B$ term thus has to be $0$ as well, which happens precisely if $B \circ u = 0$ or equivalently, since the wires are $\C^n$, $\mathbf{1} \in \ker B$.
  \end{proof}
\end{proposition}

\begin{corollary}\label{cor:xab-qgraph-char}
  A linear map of the form $X_{A, B}$ is the adjacency operator of a quantum graph if and only if
  \begin{enumerate}
    \item $B$ is a projector,
    \item $\mathring{A}$ is the adjacency matrix of a classical graph on $n$ vertices.
  \end{enumerate}
  It is undirected if
  \begin{enumerate}[resume]
    \item $B = B\transpose$,
    \item $\mathring{A}$ is the adjacency matrix of a loopless undirected classical graph on $n$ vertices.
  \end{enumerate}
  It has no loops if
  \begin{enumerate}[resume]
    \item $\mathbf{1} \in \ker B$.
  \end{enumerate}
  \begin{proof}
    We have $X_{A, B} = X_{A, B, C}$ with $C = \diag A = \diag B$, hence the conditions of \cref{prop:xabc-qgraph-char} apply. It follows that for $X_{A, B}$ to be a quantum graph, $B$ must be a projector. Moreover, the $2 \times 2$ matrices
    \begin{equation*}
      \begin{pmatrix}
        A_{ij} & C_{ij} \\
        C_{ji} & A_{ji}
      \end{pmatrix} =
      \begin{pmatrix}
        A_{ij} & 0\\
        0 & A_{ji}
      \end{pmatrix}
    \end{equation*}
    must be projectors for $i < j$. This is only the case if $A_{ij}, A_{ji} \in \{0, 1\}$. It follows that $\mathring{A}$ must be the adjacency matrix of a loopless classical graph on $n$ vertices. For the quantum graph to be undirected we know we must have $B = B\transpose$ and $\mathring{A}\adjoint = \mathring{A}$. The latter condition is implies that $\mathring{A}$ is moreover the adjacency matrix of an undirected loopless classical graph. Finally, the loop condition stays equivalent to whether $\mathbf{1} \in \ker B$.
  \end{proof}
\end{corollary}

We will sometimes be interested in looking at special cases of $X_{A, B}$ and $X_{A, B, C}$ graphs where one or more of the matrices $A, B, C$ are trivial, i.e. zero except on the diagonal (which has to match that of the remaining matrices). We denote these special cases by replacing the trivial matrices by a dot. For example, we let
\begin{align*}
  X_{A, \ccdot} &\coloneqq X_{A, \diag A},\\
  X_{A, \ccdot, C} &\coloneqq X_{A, \diag A, C}.
\end{align*}
We also have $X_{A, B} = X_{A, B, \ccdot}$ and $X_{A, \ccdot} = X_{A, \ccdot, \ccdot}$. Graphically, the adjacency operators are obtained by simply dropping the terms corresponding to the trivial matrices. For quantum graphs of the form $X_{A, \ccdot, C}$, this requires some justification, since the $B$ term contributes the diagonal. However, note that a for loopless quantum graphs we have $B\mathbf{1} = 0$, and for a diagonal $B$ we have $B\mathbf{1} = \diag B$. It follows that $\diag B = 0$ and thus $B = 0$. We thus have
\begin{equation*}
  \input{diagrams/xac-adjop.tikz}
\end{equation*}
as desired. Now a particularly interesting special case occurs when $A$ is the only non-trivial matrix.

\begin{corollary}\label{xa-qgraph-char}
  A linear map of the form $X_{A, \ccdot}$ is the adjacency operator of a (loopless, undirected) quantum graph if and only if $A$ is the adjacency matrix of a (loopless, undirected) classical graph $G$.
  \begin{proof}
    Note that $X_{A, \ccdot} = X_{A, B}$ for $B = \diag A$. \cref{cor:xab-qgraph-char} implies that $X_{A, B}$ is a quantum graph if only if $B$ is a projector and $\mathring{A}$ is the adjacency matrix of a classical graph. Since $B$ is diagonal, it follows that $\diag A = \diag B \in \{0,1\}^n$. Consequently, $A = \mathring{A} + \diag A$ is also the adjacency matrix of a classical graph, potentially containing loops. Now for diagonal $B$, we have $B = B\transpose$, so $X_{A, \ccdot}$ is undirected if and only if $A$ is the adjacency matrix of an undirected classical graph. Finally, $X_{A, \ccdot}$ is loopless if and only if $B\mathbf{1} = 0$. For a diagonal matrix $B$ we have $B\mathbf{1} = 0$ if and only if $\diag B = 0$. Since $\diag A = \diag B$, it follows that $A$ has zero diagonal and is thus the adjacency matrix of a loopless classical graph.
  \end{proof}
\end{corollary}

Since $U(n) \supseteq DU(n)$ and $O(n) \supseteq DO(n)$, the $X_{A, B}$ and $X_{A, B, C}$ graphs must subsume the complete-, edgeless-, symmetric-, and antisymmetric quantum graphs. In particular, $K_n$ and $\compl{K_n}$ must be of the form $X_{A, B}$, while $\Gsym$ and $\Gasym$ must be of the form $X_{A, B, C}$.

\begin{proposition}\label{prop:qgraphs-as-xabc}
  It holds that
  \begin{enumerate}
    \item $\compl{K}_n = X_{\ccdot, \ccdot}$
    \item $K_n = X_{\tfrac{nJ - I}{n}, \tfrac{nI - J}{n}}$
    \item $\Gasym = X_{\tfrac{J - I}{2}, \ccdot, \tfrac{I - J}{2}}$
    \item $\Gsym = X_{\tfrac{J + I}{2} - \tfrac{I}{n}, \tfrac{nI - J}{n}, \tfrac{J + I}{2} - \tfrac{I}{n}}$
  \end{enumerate}
  \begin{proof}
    Can be verified by a straightforward calculation. We show this representatively for the case of $K_n$. We have $J = uu\adjoint$. The $\mathring{A}$ part is thus given graphically by
    \begin{equation*}
      \input{diagrams/kn-xa.tikz}\enspace,
    \end{equation*}
    since we have to remove the diagonal. Together with the $B$ part, we get
    \begin{equation*}
      \input{diagrams/kn-deriv.tikz}\enspace,
    \end{equation*}
    which is precisely the adjacency operator of $K_n$.
  \end{proof}
\end{proposition}

\subsection{The Hyperoctahedral Group}\label{ssec:hypgr}

We now consider an intermediate group that sits between the diagonal orthogonal group and the full orthogonal group: the \emph{hyperoctahedral group}. This group arises naturally as a semi-direct product construction and imposes strong structural constraints on the matrices $A$, $B$, and $C$ parameterizing invariant linear maps.

\begin{definition}
  The \emph{hyperoctahedral group} $\operatorname{Hyp}(n)$ is defined as the semi-direct product
  \begin{equation*}
    \operatorname{Hyp}(n) \coloneqq S_n \ltimes DO(n),
  \end{equation*}
  where $S_n$ denotes the symmetric (permutation) group on $n$ elements and $DO(n)$ denotes the diagonal orthogonal group. Concretely, $\operatorname{Hyp}(n)$ consists of all $n \times n$ matrices that have exactly one nonzero entry in each row and column, and that entry is $\pm 1$. Equivalently, $\operatorname{Hyp}(n)$ is the group of \emph{signed permutation matrices}.
\end{definition}

The hyperoctahedral group fits into the following chain of inclusions:
\begin{equation*}
  DU(n) \supseteq DO(n) \subseteq \operatorname{Hyp}(n) \subseteq O(n) \subseteq U(n).
\end{equation*}
On the diagonal side, we have the containments
\begin{equation*}
  DO(n) \subseteq DU(n) \qquad \text{and} \qquad DO(n) \subseteq \operatorname{Hyp}(n),
\end{equation*}
where $DO(n)$ is a subgroup of both. On the other hand, $DU(n)$ and $\operatorname{Hyp}(n)$ are in general \emph{not} comparable: $DU(n)$ contains diagonal unitaries with arbitrary phases, which do not belong to $\operatorname{Hyp}(n)$, while $\operatorname{Hyp}(n)$ contains non-diagonal permutation matrices, which do not belong to $DU(n)$. Since the invariance group is larger than $DO(n)$, the family of $\operatorname{Hyp}(n)$-invariant linear maps is a \emph{subclass} of the LDOI maps from \cite{singh2021diagonal}. At the same time, since $\operatorname{Hyp}(n) \subseteq O(n)$, this family contains the $O(n)$-invariant maps from \cref{sec:UO-symmetry}.

\begin{proposition}\label{prop:hyp-abc-structure}
  A linear map $X \colon M_n \to M_n$ of the LDOI form $X_{A, B, C}$ satisfies
  \begin{equation*}
    (O \otimes O)\, X_{A,B,C}\, (O \otimes O)^\top = X_{A,B,C} \qquad \text{for all } O \in \operatorname{Hyp}(n)
  \end{equation*}
  if and only if the matrices $A$, $B$, and $C$ each lie in $\operatorname{span}\{I, J\}$, where $I$ denotes the $n \times n$ identity matrix and $J$ denotes the $n \times n$ all-ones matrix.
\end{proposition}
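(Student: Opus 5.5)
Since $\operatorname{Hyp}(n) = S_n \ltimes DO(n)$ and $X_{A,B,C}$ is already LDOI, the map is automatically invariant under the $DO(n)$ factor. Invariance under all of $\operatorname{Hyp}(n)$ is therefore equivalent to invariance under the permutation matrices $P_\pi$, $\pi \in S_n$. The plan is first to compute how conjugation by $P_\pi \otimes P_\pi$ acts on $X_{A,B,C}$, and then to read off what invariance forces on $A$, $B$, $C$.

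\textbf{Step 1 (covariance).} Write $X_{A,B,C}$ in coordinates. From the diagrams, the LDOI map acts on matrix units as
\begin{equation*}
  X_{A,B,C}(\ket{i}\bra{j}) = \delta_{ij}\sum_k A_{ki}\ket{k}\bra{k} + (1-\delta_{ij})\bigl(B_{ij}\ket{i}\bra{j} + C_{ij}\ket{j}\bra{i}\bigr),
\end{equation*}
possibly with some indices transposed depending on the convention. In the $\C^n\otimes\C^n$ picture, the action of $O \in \operatorname{Hyp}(n)$ is $O\otimes \overline{O} = O\otimes O$, since $O$ is real. Applying this to a permutation $P_\pi$ simply relabels every index by $\pi$. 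Hence
\begin{equation*}
  (P_\pi\otimes P_\pi)\,X_{A,B,C}\,(P_\pi\otimes P_\pi)^\top = X_{P_\pi A P_\pi^\top,\; P_\pi B P_\pi^\top,\; P_\pi C P_\pi^\top}.
\end{equation*}
Diagrammatically this is immediate: a permutation matrix commutes with the copy/merge spiders of $\C^n$, so the $P_\pi$ boxes slide through each spider and collect on the single wire through $A$, $B$ or $C$. This sliding works exactly because $P_\pi$ maps basis vectors to basis vectors, unlike a general orthogonal matrix.

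\textbf{Step 2 (uniqueness of parameters).} From $X_{A,B,C} = X_{A',B',C'}$ we must conclude $A=A'$, $B=B'$, $C=C'$. For this I would use the matrix-unit formula from Step 1. Evaluate on the diagonal units $\ket{i}\bra{i}$ to recover every entry of $A$, including its diagonal. Then evaluate on off-diagonal units and read off the $\ket{i}\bra{j}$ and $\ket{j}\bra{i}$ components, which recover $B_{ij}$ and $C_{ij}$ for $i\neq j$. The diagonals of $B$ and $C$ are fixed by the constraint $\diag A=\diag B=\diag C$. Alternatively, one can use the direct sum decomposition of $X_{B,A,C}$ from \cref{eq:xbac-directsum}, since the LDOI parametrisation is injective. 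I expect this bookkeeping, in particular making sure the diagonal constraint is respected, to be the main obstacle, although it is only a mild one.

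\textbf{Step 3 (permutation-invariant matrices).} Steps 1 and 2 reduce the problem to this: $X_{A,B,C}$ is $\operatorname{Hyp}(n)$-invariant if and only if each of $A$, $B$, $C$ commutes with every permutation matrix. A matrix $M$ with $P_\pi M P_\pi^\top = M$ for all $\pi$ satisfies $M_{\pi(i)\pi(j)} = M_{ij}$. Since $S_n$ acts transitively on the diagonal pairs and, for $n\ge 2$, on the ordered off-diagonal pairs, $M$ has constant diagonal $a$ and constant off-diagonal entries $b$. Thus $M = (a-b)I + bJ \in \lspan\{I,J\}$. Conversely, $I$ and $J$ commute with every permutation matrix. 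Together with Step 1 this proves both directions. The diagonal compatibility is consistent with this conclusion: the three matrices then share one common scalar diagonal, and that is the only constraint tying them together.
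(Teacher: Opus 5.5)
Your proposal is correct and follows essentially the same route as the paper. The paper also reduces to permutation invariance and derives $P_\sigma M P_\sigma^{-1} = M$ for each $M \in \{A, B, C\}$; it does this by directly comparing coefficients in the standard basis, which combines your covariance and injectivity steps into one computation. It then uses the same transitivity argument for $S_n$ on diagonal and off-diagonal index pairs to conclude $M \in \operatorname{span}\{I, J\}$.
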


\begin{proof}
  Since $DO(n) \subseteq \operatorname{Hyp}(n)$, any $\operatorname{Hyp}(n)$-invariant map is in particular $DO(n)$-invariant and hence of the LDOI form $X_{A,B,C}$, where $A, B, C \in M_n$ satisfy $\diag A = \diag B = \diag C$. It remains to determine what the additional invariance under all permutation matrices $P_\sigma$ ($\sigma \in S_n$) imposes on $A$, $B$, and $C$.

  By the same argument as in \cite[Section~3]{singh2021diagonal} (see also \cite[Section IV.A]{gulati2025entanglement}), the invariance condition
  \begin{equation*}
    (P_\sigma \otimes P_\sigma)\, X_{A,B,C}\, (P_\sigma^{-1} \otimes P_\sigma^{-1}) = X_{A,B,C} \qquad \text{for all } \sigma \in S_n
  \end{equation*}
  is equivalent to requiring
  \begin{equation*}
    P_\sigma\, A\, P_\sigma^{-1} = A, \qquad P_\sigma\, B\, P_\sigma^{-1} = B, \qquad P_\sigma\, C\, P_\sigma^{-1} = C
  \end{equation*}
  for every permutation matrix $P_\sigma$ with $\sigma \in S_n$. Indeed, expanding the invariance condition for $X_{A, B, C}$ and plugging in the standard basis, we get
  \begin{equation*}
    \input{diagrams/xabc-perm-invariant.tikz}\enspace.
  \end{equation*}
  Using that $P_\sigma \ket{k} = \ket{\sigma(k)}$, the definitions of multiplication and comultiplication, and the fact that $\sigma$ is a bijection one finds this to be equivalent to
  \begin{alignat*}{4}
    &(1 - \delta_{ij}) \delta_{ik} \delta_{\ell j} \cdot A_{\sigma(i)\sigma(j)}
    & {}+{} & \delta_{ij} \delta_{k\ell} B_{\sigma(i)\sigma(j)}
    & {}+{} & (1 - \delta_{ij}) \delta_{i\ell} \delta_{jk} \cdot C_{\sigma(j)\sigma(i)} \\
    =\enspace &(1 - \delta_{ij}) \delta_{ik} \delta_{\ell j} \cdot A_{ij}
    & {}+{} & \delta_{ij} \delta_{k\ell} B_{ij}
    & {}+{} & (1 - \delta_{ij}) \delta_{i\ell} \delta_{jk} \cdot C_{ji}
  \end{alignat*}
  Comparing the two sides of the equation for all choices of $i, j, k, \ell \in [n]$ yields $A_{\sigma(i)\sigma(j)} = A_{ij}$ for all $i, j$ and all $\sigma \in S_n$, where the case $i = j$ follows from the $B$-term and the condition $\diag A = \diag B$. The same can be concluded for $B$ and $C$. This is precisely the condition that $A$, $B$, and $C$ commute with every permutation matrix (via conjugation), i.e., $P_\sigma M P_\sigma^{-1} = M$ for $M \in \{A, B, C\}$ and all $\sigma \in S_n$.

  We now show that a matrix $M \in M_n$ satisfies $P_\sigma M P_\sigma^{-1} = M$ for all $\sigma \in S_n$ if and only if $M \in \operatorname{span}\{I, J\}$, i.e., $M = \alpha I + \beta J$ for some $\alpha, \beta \in \mathbb{C}$. The condition $P_\sigma M P_\sigma^{-1} = M$ means $M_{\sigma(i)\,\sigma(j)} = M_{ij}$ for all $i,j \in [n]$ and all $\sigma \in S_n$. In particular:
  \begin{itemize}
    \item Taking $i \neq j$: For any two pairs $(k, l)$ with $k \neq l$, the $2$-transitivity of $S_n$ provides a permutation $\sigma$ with $\sigma(i) = k$ and $\sigma(j) = l$. It follows that $M_{kl} = M_{ij}$. Hence all off-diagonal entries of $M$ are equal; call this common value $\beta$.
    \item Taking $i = j$: For any two diagonal indices $k, l \in [n]$, there exists $\sigma \in S_n$ with $\sigma(i) = k$ and $\sigma(i) = l$ only if $k = l$. Instead, for any $k$, choose $\sigma$ with $\sigma(i) = k$; then $M_{kk} = M_{ii}$. Hence all diagonal entries of $M$ are equal; call this common value $\alpha + \beta$.
  \end{itemize}
  Consequently, $M_{ij} = \alpha \delta_{ij} + \beta$ for all $i,j$, i.e., $M = \alpha I + \beta J$. The converse is immediate: Any matrix $\alpha I + \beta J$ commutes with every permutation matrix, since $P_\sigma I P_\sigma^{-1} = I$ and $P_\sigma J P_\sigma^{-1} = J$. Applying this to each of $A$, $B$, and $C$ completes the proof.
\end{proof}

It follows that a hyperoctahedral-invariant linear map $X_{A,B,C}$ is described by at most four free parameters, compared to the $3n^2 - 2n$ parameters needed for a general LDOI map. Explicitly, writing $A = a I + a' \mathring{J}$, $B = b_{\mathrm{d}} I + b \mathring{J}$, and $C = c_{\mathrm{d}} I + c \mathring{J}$, the diagonal-matching condition $\diag A = \diag B = \diag C$ forces $a = b_{\mathrm{d}} = c_{\mathrm{d}}$, leaving precisely $4$ real parameters $(a, a', b, c)$. This exactly matches the parameterization of hyperoctahedral states $X^{\mathsf{Hyp}}_{a,a',b,c}$ from \cite{park2024universal}.

\begin{proposition}\label{prop:x-hyp-aa'bc-qgraph-char}
  A linear map of the form $X^{\mathsf{Hyp}}_{a,a',b,c}$ is the adjacency operator of a quantum graph if and only if
  \begin{enumerate}
    \item $(a, b) \in \left\{ (0,0), \left(1/n, 1/n\right), (1,0), \left(1-1/n, -1/n\right) \right\}$,
    \item $(a', c) \in \left\{ (0,0), (1,0), \left(1/2, 1/2\right), \left(1/2, -1/2\right) \right\}$.
  \end{enumerate}
  Every such quantum graph is automatically undirected. It has no loops if and only if
  \begin{enumerate}[resume]
    \item $(a,b)=(0,0)$ or $(a,b)=\left(1-1/n, -1/n\right)$.
  \end{enumerate}

  \begin{proof}
    By \cref{prop:xabc-qgraph-char}, $X^{\mathsf{Hyp}}_{a,a',b,c} = X_{A,B,C}$ for $A = aI + a'\mathring{J}$, $B = aI + b\mathring{J}$, and $C = aI + c\mathring{J}$ is the adjacency operator of a quantum graph if and only if $B$ is a projector and the off-diagonal $2 \times 2$ blocks
    \begin{equation*}
      X_{ij} = \begin{pmatrix}
        A_{ij} & C_{ij} \\
        C_{ji} & A_{ji}
      \end{pmatrix}
    \end{equation*}
    are projectors for all $i \neq j$. The only projections in the span of $I$ and $J$ are $0, J/n, I-J/n$, and $I$. This yields the four cases in the first point.
    For the second condition, observe that
    \begin{equation*}
      X_{ij} = \begin{pmatrix}
        a' & c \\
        c & a'
      \end{pmatrix}.
    \end{equation*}
    For $X_{ij}$ to be a projector, this real symmetric matrix must be zero, the identity, or a rank-$1$ projector. The latter condition is equivalent to $a'=1/2$ and $c = \pm 1/2$. This produces the four cases in the second point.
    Since $(a, a', b, c)$ are real parameters and $\mathring{J}$ is symmetric, the matrices $\mathring{A} = a'\mathring{J}$, $B = aI + b\mathring{J}$, and $\mathring{C} = c\mathring{J}$ are inherently symmetric and self-adjoint. By \cref{prop:xabc-qgraph-char}, this guarantees every such quantum graph is automatically undirected.
    Finally, the graph has no loops if and only if $\mathbf{1} \in \ker B$. This is the case for two of the four options in the first point, finishing the proof.
  \end{proof}
\end{proposition}

Note that the four graphs in \cref{prop:qgraphs-as-xabc} have automatically hyperoctahedral symmetry and correspond to the following choices of parameters:
\begin{enumerate}
  \item $\compl{K_n}$ $\to$ $(a, a', b, c) = (0, 0, 0, 0)$
  \item $K_n$ $\to$ $(a, a', b, c) = (1-1/n, 1, -1/n, 0)$
  \item $\Gsym$ $\to$ $(a, a', b, c) = (1 - 1/n, 1/2, -1/n, 1/2)$
  \item $\Gasym$ $\to$ $(a, a', b, c) = (0, 1/2, 0, -1/2)$.
\end{enumerate}

Each of the four graphs above corresponds to a different choice of parameters $(a',c)$. The other four graphs correspond to the same four choices of $(a', c)$ with value of $(a, b) \in \{(0,0),(1-1/n, -1/n)\}$ swapped. In other words, these extra four quantum graphs are $\compl{K_n}$, $K_n$, $\Gsym$, and $\Gasym$ with the $B = I - J/n$ part added (for $\compl{K_n}$ and $\Gasym$) or removed (for $K_n$ and $\Gsym$).

\subsection{Classical Models of Quantum Graphs}\label{ssec:classicalmodels}
The structure of the $X_{A, B}$ and $X_{A, B, C}$ quantum graphs nicely captures an increasing level of ``quantumness'' as we relax the symmetry requirements. The easiest class of quantum graphs acted on by the diagonal unitary group are those of the form $X_{A, \ccdot}$. By \cref{cor:xab-qgraph-char}, quantum graphs of this form satisfy that $\mathring{A}$ is the adjacency matrix of a classical graph. In fact, $X_{A, \ccdot}$ is precisely the adjacency operator of a classical graph $G$ under the diagonal embedding of $\C^n$ into $M_n$. Moreover, it is an enlightening exercise to see that the corresponding projector, obtained by realigning, is precisely the projector onto the operator space $S = \lspan \{\ket{i}\bra{j} \mid i \sim j\}$.

Moving from purely classical graphs into the quantum realm, our first stop are the quantum graphs of the form $X_{A, B}$, which we will simply call AB graphs. As one can see in \cref{cor:xab-qgraph-char}, the conditions on $A$ and $B$ that make $X_{A, B}$ a quantum graph are almost completely independent. Except for the diagonal---which has to coincide with that of $B$---$A$ is still the adjacency matrix of a classical graph. $B$ has to be a projector. We may hence think of $X_{A, B}$ simply as a classical graph to which a subspace of $\C^n$ is attached.

If instead of requiring full diagonal unitary symmetry, we only require diagonal orthogonal symmetry, we end up with ABC graphs: quantum graphs of the form $X_{A, B, C}$. Taking this step, we lose the direct connection to classical graphs. As we have seen in \cref{prop:xabc-qgraph-char}, the conditions on $A$ are much less strict, in particular $\mathring{A}$ no longer has to be an adjacency matrix. There is, however, still enough structure to think of these quantum graphs in terms of a classical model, at least whenever $X_{A, B, C}$ is undirected. The central observation is that condition (2) of \cref{prop:xabc-qgraph-char} restricts how $A$ and $C$ must interact, while $B$ is still almost entirely independent. Condition (2) requires that
\begin{equation*}
  X_{ij} \coloneqq \begin{pmatrix}
    A_{ij} & C_{ij}\\
    C_{ji} & A_{ji}
  \end{pmatrix}
\end{equation*}
be a projector for all $i < j$. In particular, one of three cases must apply.

\paragraph{$X_{ij}$ has rank 0.} In this case $X_{ij} = 0$ and thus $A_{ij} = A_{ji} = C_{ij} = C_{ji} = 0$.

\paragraph{$X_{ij}$ has rank 2.} In this case, $X_{ij}$ has full rank and is thus equal to the identity. It follows that $A_{ij} = A_{ji} = 1$.

\paragraph{$X_{ij}$ has rank 1.} In this case, we must have $\Tr X_{ij} = \rk X_{ij} = 1$. Since $X_{ij}$ is a projector and hence positive-semidefinite, the diagonal elements $A_{ij}$, $A_{ji}$ must be non-negative reals. Assuming $X_{A, B, C}$ is undirected, we have $\mathring{A}\adjoint = \mathring{A}$ and hence $A_{ij} = A_{ji}$. Consequently $A_{ij} = A_{ji} = 1/2$. Since $X_{ij}$ is rank-deficient, we have $\det X_{ij} = A_{ij}A_{ji} - C_{ij}C_{ji} = 0$, and since $X_{ij}\adjoint = X_{ij}$, we have $C_{ji} = \overline{C_{ij}}$. It follows that $A_{ij}A_{ji} = \abs{C_{ij}}^2$ and thus $\abs{C_{ij}}^2 = 1/4$. This occurs precisely when $C_{ij} = e^{\theta i}/2$. It follows that every rank-1 $X_{ij}$ is uniquely determined by a phase $\theta$.
\\

Consequently, we may interpret an undirected quantum graph of the form $X_{A, \ccdot, C}$ as an undirected classical graph on $n$ vertices with two kinds of edges: Classical edges, which correspond to rank-2 $X_{ij}$, and ``strange'' edges, which are annotated with a phase $\theta$ and correspond to rank-1 $X_{ij}$.

\begin{definition}[Strange Graphs]
  Let $G$ be an undirected quantum graph on $M_n$ of the form $X_{A, B, C}$. The \emph{strange graph} $\mathfrak{G}(A, C)$ associated to $G$ is an undirected graph with vertex set $[n]$ and edge set $E \cup \bigcup_{\theta \in [0, 2\pi)}\tilde{E}_\theta$, where
  \begin{equation*}
    E = \{ \{i, j\} \subseteq [n] \mid X_{ij} = \identity \} \qquad\&\qquad \tilde{E}_\theta = \{\{i, j\} \subseteq [n] \mid i < j, X_{ij} = \frac{1}{2}\begin{pmatrix}
        1 & e^{\theta i} \\
        e^{-\theta i} & 1
    \end{pmatrix}\}.
  \end{equation*}
\end{definition}

A quantum graph of the form $X_{A, B, C}$ can then be thought of as a strange graph to which a subspace of $\C^n$ is attached. We have seen that in particular $\Gsym$ and $\Gasym$ are of this form.

\begin{proposition}\label{prop:gsymgasymstrange}
  The strange graph associated to $\Gsym$ is the complete graph consisting only of strange edges with phase $0$. The strange graph associated to $\Gasym$ is the complete graph consisting only of strange edges with phase $\pi$.
  \begin{proof}
    By \cref{prop:qgraphs-as-xabc}, we have $\Gsym = X_{A, B, C}$ for $A = (J + \identity)/2$ and $C = (J + \identity)/2$ and thus
    \begin{equation*}
      X_{ij} = \frac{1}{2} \begin{pmatrix}
        1 & 1 \\
        1 & 1
        \end{pmatrix} = \frac{1}{2} \begin{pmatrix}
        1 & e^{0 i} \\
        e^{0 i} & 1
      \end{pmatrix}.
    \end{equation*}
    Similarly, we have $\Gasym = X_{A, B, C}$ for $A = (J - \identity)/2 - \identity/n$ and $C = (\identity - J)/2 - \identity/n$. It follows that
    \begin{equation*}
      X_{ij} = \frac{1}{2} \begin{pmatrix}
        1 & -1 \\
        -1 & 1
        \end{pmatrix} = \frac{1}{2} \begin{pmatrix}
        1 & e^{\pi i} \\
        e^{-\pi i} & 1
      \end{pmatrix}.\qedhere
    \end{equation*}
  \end{proof}
\end{proposition}

To be precise, $\Gsym$ additionally has a subspace attached. A counterpart without the subspace contribution also exists and has hyperoctahedral symmetry instead of full orthogonal symmetry, see \cref{ssec:hypgr}.
We have also seen that $X_{A, \ccdot}$ is a classical graph embedded into $M_n$. It turns out that this classical graph is precisely the strange graph associated to $X_{A, \ccdot}$.

\begin{proposition}\label{prop:xastrangeisa}
  Let $A$ be an undirected classical graph. Then the strange graph associated to $X_{A, B}$ has no strange edges and satisfies $\mathfrak{G}(A, \ccdot) \cong A$.
  \begin{proof}
    We have $X_{A, B} = X_{A, B, C}$ for some $C \in M_n$ with $C_{ij} = 0$ for all $i \neq j$. This implies that $X_{ij} = A_{ij} \identity$ for all $i \neq j \in [n]$ and thus $\mathfrak{G}(A, C) \cong A$.
  \end{proof}
\end{proposition}

%% file: content/graphtheory.tex
With our examples in hand, we now want to take a closer look at their graph theoretic properties. This section is structured as follows. To each graph theoretic property introduced in \cref{ssec:graphprops}, we dedicate a subsection. We start with the number of connected components, continue with the chromatic number, and conclude with the independence- and the clique number. Within each subsection we look at the corresponding property for each of the above-mentioned graphs. In the first subsection, to get used to the diagrammatic arguments involved, we start easy with the empty and complete graphs, moving on through the symmetric and antisymmetric graphs, AB graphs, and finally ABC graphs in full generality. In the subsequent subsections, we invert this order, starting with ABC graphs, the most general case, to avoid proving redundant results.

\subsection{Connected Components}
\input{content/graphtheory/conncomp}

\subsection{Colouring}
\input{content/graphtheory/colouring}

\subsection{Independent Sets}
\input{content/graphtheory/indepsets}

\subsection{Cliques}
\input{content/graphtheory/cliques}

We summarise our findings in the table below. Note that while we only list graph properties, we have in certain cases proved even stronger results, such as correspondences between the quantum graph structure and structure of matrices or classical graphs. We have also proved stronger bounds in certain special cases that we do not list for the sake of legibility.

\begin{landscape}
    \def\arraystretch{2}
    \begin{table}
    \begin{center}
  % \parbox[c][\textwidth][s]{\linewidth}{%
      \setlength\dashlinegap{2pt}
      \renewcommand\tabularxcolumn[1]{m{#1}}
      \begin{tabularx}{\linewidth}{|Y|Y|Y|c|c|c|}
        \hline
        Group Action & Quantum Graph & Connected \mbox{Components} & $\chi$ & $\alpha$ & $\omega$ \\
        \hline
        \multirow{2}{*}{$U(n)$}
                     & \cellcolor{gray!15}$\compl{K}_n$ &\cellcolor{gray!15}$n$ &\cellcolor{gray!15}$1$ &\cellcolor{gray!15}$n$ &\cellcolor{gray!15}$1$ \\

                     & $K_n$ & $1$ & not colourable & $1$ & $n$ \\

                     \hdashline

        \multirow{2}{*}{$O(n)$} &\cellcolor{gray!15}$\Gsym$ &\cellcolor{gray!15}1 &\cellcolor{gray!15}\makecell{\pad{1.35cm}{$2$ for $n=2$} \\not colourable for $n \geq 3$} &\cellcolor{gray!15}$1$ &\cellcolor{gray!15}$\lceil n/2 \rceil$\\
                                & $\Gasym$ & \makecell{$2$ for $n = 2$\\ \mbox{$1$ for $n \geq 3$}} & $n$ & $1$ & $\lceil n/2 \rceil$\\

                                \hdashline

        \multirow{3}{*}{$DU(n)$} &\cellcolor{gray!15}$ X_{A, \ccdot}$ &\cellcolor{gray!15}same as $A$ &\cellcolor{gray!15}$\chi(A)$ &\cellcolor{gray!15}$\alpha(A)$ &\cellcolor{gray!15}\makecell{\pad{1.4cm}{$\geq \omega(A) - 1$} \\$\leq n - 1$} \\
                                 & $X_{\ccdot, B}$ & $n$ & \makecell{$\leq n$ \\ $\geq n / \dim\ker B$} & \makecell{$\leq n - \rk B$ \\$\geq \max \{ \operatorname{EqRows}(B), 1 + \left\lfloor \frac{n-1}{\rk B + 1} \right\rfloor \}$} &$\leq \sqrt{\rk B + 1}$ \\
                                 &\cellcolor{gray!15}$X_{A, B}$ &\cellcolor{gray!15}same as $A$ &\cellcolor{gray!15}\makecell{\pad{.3cm}{not colourable in general} \\ $\geq \max \{\chi(A), \chi(X_{\ccdot, B}) \}$} &\cellcolor{gray!15}$\leq \min\{\alpha(A), \alpha(X_{\ccdot, B})\}$ &\cellcolor{gray!15}$\geq \max \{\omega(X_{A, \ccdot}), \omega(X_{\ccdot, B})\}$ \\

                                 \hdashline

        \multirow{2}{*}{$DO(n)$} & $X_{A, \ccdot, C}$ & $\geq \mathfrak{G}(A, C)$ & $\leq \chi(\mathfrak{G}(A, C))$ & $\alpha(\mathfrak{G}(A, C))$ &  \\
                                 &\cellcolor{gray!15} $X_{A, B, C}$ &\cellcolor{gray!15}$\geq \mathfrak{G}(A, C)$ &\cellcolor{gray!15}$\geq \max \{\chi(X_{A, \ccdot, C}), \chi(X_{\ccdot, B})\}$ &\cellcolor{gray!15}$\leq \min \{\alpha(\mathfrak{G}(A, C)), \alpha(X_{\ccdot, B})\}$ &\cellcolor{gray!15}$\geq \max \{\omega(X_{A, \ccdot, C}), \omega(X_{\ccdot, B}))\}$ \\

                                 \hline
      \end{tabularx}
    \end{center}
  \caption{Summary of quantum graph properties.}
  \label{fig:qgraphexamples-with-props}
\end{table}
\end{landscape}

%% file: content/graphtheory/conncomp.tex
Recall that a quantum graph has at least $k$ connected components if there exist $k$ non-zero projectors $P_1, \dots, P_k$ on $\C^n$ that sum to the identity, such that
\begin{equation}\label{eq:conn-comp-cond-mn}
  \input{diagrams/conn-comp-condition-mn.tikz}.
\end{equation}
for all $s \neq t \in [k]$. This condition can obviously be satisfied vacuously, so every quantum graph has at least one connected component. On the other hand, the non-zero requirement on the projectors shows that a quantum graph can have at most $n$ connected components. This upper bound is achieved by the empty graph: Since the adjacency operator of $\compl{K}_n$ is the zero map, any complete collection of pairwise orthogonal rank-1 projectors trivially satisfies \cref{eq:conn-comp-cond-mn}. We hence conclude the following.

\begin{proposition}
  $\compl{K}_n$ has $n$ connected components.
\end{proposition}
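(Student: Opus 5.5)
The plan is to prove the two inequalities separately: $\compl{K}_n$ has at least $n$ components, and no quantum graph on $M_n$ has more than $n$.

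For the lower bound, I would first use \cref{prop:qgraphs-as-xabc}, which gives $\compl{K}_n = X_{\ccdot,\ccdot}$. Its realignment is then the projector onto the zero operator space, so its adjacency operator is the zero map $M_n \to M_n$. I would take the standard basis projectors $P_s = \ket{s}\bra{s}$ for $s \in [n]$. These are non-zero, pairwise orthogonal, and satisfy $\sum_s P_s = \identity$. The diagram in \cref{eq:conn-comp-cond-mn} has the adjacency operator as a factor, sandwiched between the left multiplications by $P_s$ and $P_t$. Hence it equals zero for every $s \neq t$, and the condition holds trivially. This witnesses at least $n$ connected components.

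For the upper bound, the definition requires the projectors $P_1,\dots,P_k$ to be non-zero with $\sum_s P_s = \identity$. Squaring $\sum_s P_s = \identity$, or taking traces of $P_sP_t$, shows that they are mutually orthogonal. Taking traces then gives $\sum_s \rk P_s = n$ with each $\rk P_s \geq 1$, so $k \leq n$. Together with the lower bound, $\compl{K}_n$ has exactly $n$ connected components.

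There is essentially no obstacle here. The only point needing care is that the suppressed normalisation of $M_n$ does not matter. Scaling the adjacency operator by the factor $n$ preserves whether the condition diagram vanishes, so the zero map stays zero in either normalisation.
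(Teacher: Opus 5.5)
Your proposal is correct and follows the paper's argument: the upper bound $k \leq n$ comes from the $P_s$ being non-zero projectors summing to $\identity$, and the lower bound from the adjacency operator of $\compl{K}_n$ being the zero map, so any complete family of pairwise orthogonal rank-$1$ projectors (such as your standard basis projectors) satisfies the condition trivially. As a minor simplification, you do not need mutual orthogonality for the upper bound, since $\sum_s \rk P_s = \Tr \sum_s P_s = \Tr \identity = n$ holds directly.
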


We would also expect the complete graph $K_n$ to be connected, and indeed this is what we find.

\begin{proposition}
  $K_n$ is connected.
  \begin{proof}
    For $K_n$, \cref{eq:conn-comp-cond-mn} reads
    \begin{equation*}
      \input{diagrams/kn-conn-comp-cond.tikz}.
    \end{equation*}
    Since the $P_s$ sum to the identity, $P_s$ and $P_t$ must project onto orthogonal subspaces, and hence $P_sP_t = 0$. The condition thus simplifies to
    \begin{equation*}
      \input{diagrams/kn-conn-comp-cond-simpl.tikz}.
    \end{equation*}
    Observe that for non-zero $P_s, P_t$, the left-hand side is a rank-1 matrix and hence cannot be equal to zero. It follows that any collection of projectors satisfying \cref{eq:conn-comp-cond-mn} contains only the identity and hence $K_n$ is connected.
  \end{proof}
\end{proposition}

\subsubsection{The Symmetric and Antisymmetric Cases}

The number of connected components of the Symmetric and Antisymmetric quantum graphs is also limited by rank constraints. Generally, both graphs are connected, although $n = 2$ is a special case for $\Gasym$.

\begin{proposition}\label{prop:gasymconncomp}
  For $n \geq 3$, $\Gasym$ is connected. For $n = 2$, $\Gasym$ has $2$ connected components.
  \begin{proof}
    For $\Gasym$, \cref{eq:conn-comp-cond-mn} is equivalent to
    \begin{equation}\label{eq:gasymconncompcond}
      \input{diagrams/gasym-conn-comp-cond.tikz}\enspace.
    \end{equation}
    Note that the left-hand side has rank $1$, while the right-hand side has rank $\rk P_s \cdot \rk P_t$. Consequently, the condition is satisfiable only if $\rk P_s = \rk P_t = 1$. We hence must have $P_s = \ket{\psi_s}\bra{\psi_s}$, where $\braket{\psi_s | \psi_t} = \delta_{st}$ for all $s, t$. Plugging these definitions into \cref{eq:gasymconncompcond} yields
    \begin{equation*}
      \input{diagrams/gasym-conn-comp-deriv.tikz}\enspace.
    \end{equation*}
    This is true if and only if $\ket{\overline{\psi_s}} = c\ket{\psi_t}$ and $\ket{\overline{\psi_t}} = d\ket{\psi_s}$ where $c\overline{d} = 1$. Now if $n \geq 3$, then there exist pairwise orthogonal $\ket{\psi_1}$, $\ket{\psi_2}$, $\ket{\psi_3}$, while the condition implies that $\ket{\psi_1} \sim \ket{\overline{\psi_2}} \sim \ket{\psi_3}$. This contradicts their orthogonality, so the only way to satisfy \cref{eq:gasymconncompcond} is vacuously and it follows that $\Gasym$ is connected for $n \geq 3$. On the other hand, if $n = 2$, then $\ket{\psi_1} = (\ket1 + i\ket2)/\sqrt{2}$ and $\ket{\psi_2} = (\ket1 - i\ket2)/\sqrt{2}$ form a maximal set of orthonormal vectors, and we have $\ket{\psi_1} = \ket{\overline{\psi_2}}$. It follows that $\Gasym$ has $2$ connected components for $n = 2$.
  \end{proof}
\end{proposition}

For $\Gsym$, the same reasoning shows that it is connected even for $n = 2$.

\begin{proposition}
  $\Gsym$ is connected.
  \begin{proof}
    For $\Gsym$, \cref{eq:conn-comp-cond-mn} becomes
    \begin{equation*}
      \input{diagrams/gsym-conn-comp-cond.tikz}\enspace.
    \end{equation*}
    Again, since the $P_s$ sum to the identity, the condition simplifies to
    \begin{equation}\label{eq:gsymconncompcond}
      \input{diagrams/gsym-conn-comp-cond-simpl.tikz}\enspace.
    \end{equation}
    Just as for $\Gasym$, the condition is satisfiable only if $\rk P_s = \rk P_t = 1$. We hence must have $P_s = \ket{\psi_s}\bra{\psi_s}$, where $\braket{\psi_s | \psi_t} = \delta_{st}$ for all $s, t$. Plugging these definitions into \cref{eq:gsymconncompcond} yields
    \begin{equation*}
      \input{diagrams/gsym-conn-comp-deriv.tikz}\enspace.
    \end{equation*}
    This is true if and only if $\ket{\overline{\psi_s}} = c\ket{\psi_t}$ and $\ket{\overline{\psi_t}} = d\ket{\psi_s}$ where $c\overline{d} = -1$. But at the same time we have
    \begin{equation*}
      \ket{\psi_t} = \overline{d} \ket{\overline{\psi_s}} = c \overline{d} \ket{\psi_t},
    \end{equation*}
    so $c\overline{d} = 1 \neq -1$. It follows that the only way to satisfy \cref{eq:gsymconncompcond} is vacuously and hence $\Gsym$ is connected.
  \end{proof}
\end{proposition}

\subsubsection{A Splitting Principle}\label{ssec:indepres}
We now move on to the classes of quantum graphs parameterised by matrices $A, B, C$ with $\diag A = \diag B = \diag C$. We have seen in \cref{ssec:classicalmodels} that we can think of these quantum graphs in discrete terms: A quantum graph of the form $X_{A, B, C}$ is uniquely determined by a classical graph on $n$ vertices, whose edges may be annotated with phases. This graph only depends on $A$ and $C$, whereas $B$---always a projector---can be thought of as attaching some subspace of $\C^n$. If $\mathring{C} = 0$, we even recover embeddings of classical graphs. It turns out that these models are not just ways to visualise quantum graphs, they also are a useful model for graph-theoretic investigations. Concretely, we will be able to give tight bounds on graph-theoretic parameters in terms of the strange graphs $\mathfrak{G}(A, C)$, and the subspace $B$.

The first such result is that the conditions for the existence of connected components split into independent conditions for $A, C$---which we may phrase in terms of $\mathfrak{G}(A, C)$---and $B$. We will later find that analogous results hold for the other graph parameters as well. We call this the \emph{splitting principle}. We begin by expanding the connected component condition for $X_{A, B, C}$.
\begin{equation*}
  \input{diagrams/conn-comp-cond-xabc.tikz}
\end{equation*}
We can plug the standard basis into the right tensor legs,
\begin{equation*}
  \input{diagrams/conn-comp-cond-xabc-stdbas.tikz}\enspace.
\end{equation*}
Evaluating the (co)multiplications yields two distinct conditions:
\begin{equation}\label{eq:xacconncondsimpl}
  \input{diagrams/conn-comp-cond-xabc-xac.tikz}
\end{equation}
for $i \neq j$, and since per definition $\mathring{A}_{ii} = \mathring{C}_{ii} = 0$,
\begin{equation*}
  \input{diagrams/conn-comp-cond-xabc-xb.tikz}
\end{equation*}
for $i = j$.
These conditions need to hold for all $i, j \in [n]$. It follows that a set of projectors $P_1, \dots, P_k$ witnesses the existence of $k$ connected components of a quantum graph $X_{A, B, C}$ if and only if they form $k$ connected components of both $X_{A, \ccdot, C}$ and $X_{\ccdot, B}$ individually. We may thus conclude the following.

\begin{proposition}\label{prop:conncompindep}
  Let $k_{AC}$ be the number of connected components of a quantum graph $X_{A, \ccdot, C}$ and $k_B$ the number of connected components of a quantum graph $X_{\ccdot, B}$. Then $X_{A, B, C}$ has at most $\min(k_{AC}, k_B)$ connected components.
\end{proposition}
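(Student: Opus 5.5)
The plan is to turn the decomposition derived just before the statement into a short witness argument. By definition, $X_{A, B, C}$ has at least $k$ connected components exactly when there are non-zero projectors $P_1, \dots, P_k$ on $\C^n$ with $\sum_s P_s = \identity$ satisfying \cref{eq:conn-comp-cond-mn} for all $s \neq t$.

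The first step is to make precise that this condition for $X_{A, B, C}$ is equivalent to the conjunction of the same condition for $X_{A, \ccdot, C}$ and for $X_{\ccdot, B}$, for one and the same family $P_1, \dots, P_k$.

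\begin{itemize}
\item Expand $X_{A, B, C}$ as the sum of its $\mathring{A}$-, $B$- and $\mathring{C}$-terms. Since the condition is linear in the adjacency operator, the left-hand side of \cref{eq:conn-comp-cond-mn} splits into three diagrams.
\item Plug the standard basis $\ket{i}, \ket{j}$ into the two remaining legs, which is legitimate since these legs are $\C^n$ wires.
\item Use the comultiplications to sort the resulting equations by whether $i \neq j$ or $i = j$.
\item For $i \neq j$, the $B$-term vanishes, because its copying structure forces $i = j$. What remains is exactly the condition \cref{eq:xacconncondsimpl}. This is also what one obtains when the same expansion is applied to $X_{A, \ccdot, C}$, whose adjacency operator is given by dropping the $B$ term, as shown earlier.
\item For $i = j$, the $\mathring{A}$- and $\mathring{C}$-terms vanish because these matrices have zero diagonal. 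What remains is exactly the $i = j$ condition for $X_{\ccdot, B}$.
\end{itemize}

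Conversely, each of $X_{A, \ccdot, C}$ and $X_{\ccdot, B}$ yields, under the same expansion, only one of the two families of equations, the other family being trivially satisfied. Hence a family of projectors witnesses $k$ components of $X_{A, B, C}$ if and only if it witnesses $k$ components of both $X_{A, \ccdot, C}$ and $X_{\ccdot, B}$.

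The second step gives the bound. Suppose $X_{A, B, C}$ has $k$ connected components, witnessed by $P_1, \dots, P_k$. By the first step, the same projectors witness $k$ components of $X_{A, \ccdot, C}$, so $k \leq k_{AC}$. Likewise they witness $k$ components of $X_{\ccdot, B}$, so $k \leq k_B$. Taking the maximal such $k$ gives the claim, namely at most $\min(k_{AC}, k_B)$ components.

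The step that needs care is the justification that the two families of equations are truly separate, i.e.\ that the $B$-term contributes nothing for $i \neq j$ and the $A$- and $C$-terms contribute nothing for $i = j$. I would verify this by writing the $M_n$ legs in the standard basis and checking the Kronecker deltas, in the same way as in the proof of \cref{prop:hyp-abc-structure}.

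One should not expect equality in general. Projectors that witness the maximal number of components for $X_{A, \ccdot, C}$ need not coincide with those that do so for $X_{\ccdot, B}$, which is why only an upper bound is claimed.
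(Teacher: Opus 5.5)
Your proposal is correct and follows the same route as the paper. You expand the condition, plug the standard basis into the right legs, and separate the $i \neq j$ equations (the $\mathring{A}$/$\mathring{C}$ condition) from the $i = j$ equations (the $B$ condition), so that any family witnessing $k$ components of $X_{A, B, C}$ also witnesses $k$ components of both $X_{A, \ccdot, C}$ and $X_{\ccdot, B}$. The converse direction of the splitting, which you also state, is not needed for the bound, but it is harmless.
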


\subsubsection{AB Graphs}
It turns out that the number of connected components of $X_{A, B}$ only depends on $A$. We first show that $X_{A, B}$ has at least as many connected components of $A$.

\begin{proposition}\label{prop:aconncomptoxab}
  For all $k \in \N$, it holds that if $A$ has $k$ connected components then so does $X_{A, B}$.
  \begin{proof}
    Let $C_1, \dots, C_k$ be the connected components of $A$. For all $s \in [k]$, we let
    \begin{equation*}
      P_s = \sum_{i \in C_s} \ket{i}\bra{i}.
    \end{equation*}
    Since the $C_s$ form a partition of $[n]$, the $P_s$ sum to the identity. If we plug in the definition of $P_s$ into the connected component condition, we get a sum with summands
    \begin{equation*}
      \input{diagrams/conn-comp-cond-xab-deriv.tikz}
    \end{equation*}
    for all $i \in C_s$ and $j \in C_t$. These are equal to
    \begin{equation*}
      \input{diagrams/conn-comp-cond-xab-deriv-2.tikz}
    \end{equation*}
    Since $C_s$ and $C_t$ are disjoint, we always have $i \neq j$, so the $B$ term vanishes. Since $i$ and $j$ are in different connected components by assumption, we have that $i \not\sim j$ and thus $A_{ij} = 0$. This makes the remaining terms vanish, which completes the proof.
  \end{proof}
\end{proposition}

The converse is slightly more involved.

\begin{proposition}\label{prop:aconncompfromxab}
  For all $k \in \N$, it holds that if $X_{A, B}$ has $k$ connected components then so does $A$.
  \begin{proof}
    Suppose that $P_1, \dots, P_k$ are projectors witnessing that $X_{A, B}$ has $k$ connected components. By the splitting principle of \cref{ssec:indepres}, we know that $P_1, \dots, P_k$ also form $k$ connected components of $X_{A, \ccdot}$. The projectors thus satisfy
    \begin{equation}\label{eq:conncompxaedgecond}
      \input{diagrams/conn-comp-cond-xa-simpl.tikz}
    \end{equation}
    for all $i, j \in [n]$, cf. \cref{eq:xacconncondsimpl}. This implies that whenever we have an edge $i \sim j$, that is $A_{ij} \neq 0$, then we must have
    \begin{equation*}
      P_s\ket{i} = 0 \qquad\textsc{or}\qquad P_t\ket{j} = 0.
    \end{equation*}

    Consider the following family of sets
    \begin{equation*}
      C_s \coloneqq \{i \in [n] \mid P_s \ket{i} \neq 0 \}.
    \end{equation*}
    First note that all the $C_s$ are non-empty. Indeed, suppose there was an $s \in [k]$ with $C_s = \varnothing$. Then $P_s\ket{i} = 0$ for all $i \in [n]$ and hence $P_s = 0$, contradicting the assumption that all $P_s$ are non-zero.
    Along similar lines, we can prove that the $C_s$ cover $[n]$. Assume for contradiction that there is some $i \in [n]$ such that $i \not\in C_s$ for all $s \in [k]$. Then $P_s \ket{i} = 0$ for all $s$, and since the $P_s$ sum to the identity, we get
    \begin{equation*}
      \ket{i} = \identity \ket{i} = \sum_{s \in [k]} P_s \ket{i} = 0,
    \end{equation*}
    contradiction. Next, we claim that the $C_s$ almost partition $[n]$.

    \begin{claim}\label{cl:conncompxab3}
      Let $s_1 \neq s_2 \in [k]$. If $i \in C_{s_1} \cap C_{s_2}$ then $i$ is an isolated vertex in $A$.
      \begin{subproof}
        Suppose for contradication that there exists some $i \in C_{s_1} \cap C_{s_2}$ that is not isolated. Then there exists some $j \neq i \in [n]$ such that $\{i, j\}$ is an edge in $A$. By \cref{eq:conncompxaedgecond}, we have
        \begin{align*}
          P_{s_1}\ket{i} &= 0 \qquad\textsc{or}\qquad P_{t}\ket{j} = 0 \qquad\text{for all } t \neq s_1\\
          P_{s_2}\ket{i} &= 0 \qquad\textsc{or}\qquad P_{t}\ket{j} = 0 \qquad\text{for all } t \neq s_2
        \end{align*}
        By assumption $P_{s_1}\ket{i} \neq 0$ and $P_{s_2}\ket{i} \neq 0$, so we have $P_t\ket{j} = 0$ for all $t \neq s_1$ and for all $t \neq s_2$. But $s_1 \neq s_2$, so we have $P_t\ket{j} = 0$ for all $t$, contradicting that the $P_s$ sum to the identity.
      \end{subproof}
    \end{claim}

    Most importantly, there do not exist edges between different $C_s$.

    \begin{claim}\label{cl:conncompxabnoedges}
      Let $i \neq j \in [n]$ and $s_1 \neq s_2 \in [k]$ such that $i \in C_{s_1}$ and $j \in C_{s_2}$. Then $A_{ij} = 0$.
      \begin{subproof}
        If $\{i, j\}$ was an edge in $A$, then by \cref{eq:conncompxaedgecond}
        \begin{equation*}
          P_{s_1}\ket{i} = 0 \qquad\textsc{or}\qquad P_{t}\ket{j} = 0
        \end{equation*}
        for all $t \neq s_1$. Since $i \in C_{s_1}$, we have $P_{s_1}\ket{i} \neq 0$ and thus $P_t\ket{j} = 0$ for all $t \neq s_1$. In particular this holds for $t = s_2 \neq s_1$, so $P_{s_2}\ket{j} = 0$, contradicting $j \in C_{s_2}$.
      \end{subproof}
    \end{claim}

    We now show that $A$ has at least $k$ connected components. Define the sets $B_s := C_s \setminus \bigcup_{t \neq s} C_t$ and the set $F \coloneqq [n] \setminus \bigcup_s B_s$. By \cref{cl:conncompxab3}, every non-isolated vertex of $C_s$ belongs to $B_s$, and every vertex in $F$ lies in at least two distinct $C_t$'s and is therefore isolated. Since the $B_s$ are pairwise disjoint and there are no edges between different $C_s$ by \cref{cl:conncompxabnoedges}, each non-empty $B_s$ is a union of connected components of $A$, and each vertex in $F$ forms an isolated connected component. It follows that $A$ has at least $m + |F|$ connected components, where $m$ is the number of non-empty sets $B_s$.

    It remains to show that $m + |F| \geq k$. For every $i \in B_s$ and $t \neq s$, we have $P_t\ket{i} = 0$ since $i \notin C_t$. Since $\sum_t P_t = \identity$, this gives $\ket{i} = P_s\ket{i}$, so $\ket{i} \in \img P_s$. The vectors $\{\ket{i} \mid i \in B_s\}$ are orthonormal, so $|B_s| \leq \rk P_s$. Setting $r_s \coloneqq \rk P_s \geq 1$, we have $\sum_s r_s = n$ and $\sum_s |B_s| + |F| = n$, so
        \begin{equation*}
          |F| = \sum_{s=1}^k (r_s - |B_s|) \geq \sum_{\substack{s \in [k] \\ B_s = \varnothing}} r_s \geq k - m.
        \end{equation*}
        Consequently, $A$ has at least $m + |F| \geq k$ connected components as desired.
  \end{proof}
\end{proposition}

Since \cref{prop:aconncomptoxab,prop:aconncompfromxab} hold for any $A$ and $B$---subject to the condition that $X_{A, B}$ be a quantum graph---we obtain the following corollaries.

\begin{corollary}
  For all $k \in \N$, it holds that $X_{A, \ccdot}$ has $k$ connected components if and only if $A$ does.
\end{corollary}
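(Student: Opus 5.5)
This corollary should follow by specialising \cref{prop:aconncomptoxab,prop:aconncompfromxab} to the case $B = \diag A$, since $X_{A, \ccdot} = X_{A, \diag A}$ by definition. The only point to check is that both propositions apply, i.e.\ that $X_{A, \diag A}$ is a quantum graph of the form $X_{A, B}$ satisfying their standing hypotheses. By \cref{xa-qgraph-char}, $X_{A, \ccdot}$ is a quantum graph exactly when $A$ is the adjacency matrix of a classical graph. In that case $\diag A \in \{0,1\}^n$, so $B = \diag A$ is a diagonal projector and \cref{cor:xab-qgraph-char} is satisfied.

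For the forward direction, suppose $A$ has $k$ connected components. Then \cref{prop:aconncomptoxab}, applied with $B = \diag A$, gives $k$ connected components of $X_{A, \diag A} = X_{A, \ccdot}$; the diagonal projectors $P_s = \sum_{i \in C_s}\ket{i}\bra{i}$ are the witnesses. Conversely, suppose $X_{A, \ccdot}$ has $k$ connected components. Then \cref{prop:aconncompfromxab}, applied with the same $B$, shows that $A$ has (at least) $k$ connected components.

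The one possible obstacle is interpreting ``$A$ has $k$ connected components'' when $A$ has loops, since \cref{xa-qgraph-char} allows $\diag A \neq 0$. Loops do not affect connectivity of a classical graph, and the arguments of both propositions only use the off-diagonal entries $A_{ij}$ for $i \neq j$, so the specialisation goes through verbatim.
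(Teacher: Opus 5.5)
Your proposal is correct and takes the same route as the paper, which obtains this corollary directly by specialising \cref{prop:aconncomptoxab,prop:aconncompfromxab} to $B = \diag A$ (valid because $X_{A, \diag A}$ is a quantum graph by \cref{xa-qgraph-char}). Your remark that loops in $A$ are harmless, since both arguments only use the off-diagonal entries $A_{ij}$ with $i \neq j$, is a useful check that the paper leaves implicit.
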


\begin{corollary}\label{cor:conncompxb}
  $X_{\ccdot, B}$ has $n$ connected components.
  \begin{proof}
    It is $X_{\ccdot, B} = X_{A, B}$ for $\mathring{A} = 0$, which is a graph with $n$ isolated vertices.
  \end{proof}
\end{corollary}

\subsubsection{ABC Graphs}
It turns out that \cref{prop:aconncomptoxab} can be generalised to ABC graphs using the same ideas.

\begin{proposition}\label{prop:conncompstrangetoxabc}
  A quantum graph of the form $X_{A, B, C}$ has at least as many connected components as the strange graph $\mathfrak{G}(A, C)$.
  \begin{proof}
    Let $C_1, \dots, C_k$ be a partition of $\mathfrak{G}(A, C)$ into $k$ connected components. We show that the projectors $P_1, \dots, P_k$ with
    \begin{equation*}
      P_s = \sum_{i \in C_s} \ket{i}\bra{i}
    \end{equation*}
    witness that $X_{A, B, C}$ also has $k$ connected components. The condition
    \begin{equation*}
      \input{diagrams/conn-comp-cond-xabc.tikz}
    \end{equation*}
    becomes
    \begin{equation*}
      \input{diagrams/conn-comp-cond-xabc-deriv-2.tikz}
    \end{equation*}
    for all $i \in C_s$ and $j \in C_t$. Since $C_s$, $C_t$ are disjoint, the middle term vanishes. Since $i$ and $j$ are in different connected components by assumption, we have that $i \not\sim j$ in $\mathfrak{G}(A, C)$ and thus $A_{ij} = C_{ji} = 0$.
  \end{proof}
\end{proposition}

However, there is no hope to generalise the converse direction. Take the Antisymmetric quantum graph $\Gasym$. We have shown that its corresponding strange graph is the complete graph, whose edges all have phase $\pi$. For $n = 2$ in particular, we get a single edge with phase $\pi$. This graph is obviously connected, but we have shown in \cref{prop:gasymconncomp} that for $n = 2$, $\Gasym$ has $2$ connected components. While $\Gasym$ is connected for $n \geq 3$, the problem persists for arbitrary $n$. The reason for this is given by the next lemma, which shows that connected components of two quantum graphs are connected components of their disjoint union.

\begin{definition}
    The disjoint union of two quantum graphs $G_1 = (M_k, A_1)$ and $G_2 = (M_\ell, A_2)$ is the quantum graph  $G_1 \sqcup G_2$ on $M_{k + \ell}$ with adjacency operator $A_1 \oplus A_2$.\footnote{In the literature, the vertex set of the disjoint union is usally defined as $M_k \oplus M_\ell$, but since we work with quantum graphs on $M_n$ we view the direct sum as a suitable subspace of $M_{k + \ell}$.}
\end{definition}

Recalling that
\begin{equation*}
  X_{A, B, C}^R = B \oplus \bigoplus_{i < j} \begin{pmatrix}
    A_{ij} & C_{ij} \\
    C_{ji} & A_{ji}
  \end{pmatrix},
\end{equation*}
it is not hard to see that the strange graph of disjoint union of two ABC graphs is given by the disjoint union of their strange graphs.

\begin{lemma}\label{lem:conncompsofdisjunion}
  Let $X_{A_1, \ccdot, C_1}, \dots, X_{A_k, \ccdot, C_k}$ be quantum graphs that respectively decompose into connected components $P_1^{(i)}, \dots, P_{m_i}^{(i)}$ for $i \in [k]$. Then there exists a decomposition of the disjoint union $\bigsqcup_i X_{A_i, \ccdot, C_i}$ into connected components $R_1^{(1)}, \dots, R^{(k)}_{m_k}$.
  \begin{proof}
    Without loss of generality suppose $X_{A_i, \ccdot, C_i}$ is a quantum graph on $M_{n_i}$. Let $n = \sum_i n_i$ and define $n_i^\downarrow = \sum_{j < i} n_j$ and $n_i^\uparrow = \sum_{j > i} n_j$ for all $i \in [k]$. The projectors witnessing the connected components of the disjoint union are then given by
    \begin{equation*}
      R^{(i)}_s = 0_{n_i^\downarrow} \oplus P^{(i)}_s \oplus 0_{n_i^\uparrow}.
    \end{equation*}
    Per assumption, $P_1^{(i)}, \dots, P_{m_i}^{(i)}$ sum to $\identity_{n_i}$, so $R_1^{(1)}, \dots, R^{(k)}_{m_k}$ sum to $\identity_n$. Now recall that the connected component condition is equivalent to
    \begin{equation*}
      \input{diagrams/conn-comp-cond-xabc-xac.tikz},
    \end{equation*}
    for all $i, j \in [n]$ and $s \neq t$. This is always satisfied if $i \not\sim j$ in the strange graph of $\bigsqcup_i X_{A_i, \ccdot, C_i}$. Suppose then that $i \sim j$. Then $i$ and $j$ must both be in the part of the strange graph belonging to some $X_{A_a, \ccdot, C_a}$. This implies that if any $P \in \{P_s, P_t\}$ is equal to $R^{(b)}_s$ for $b \neq a$, then $P\ket{i} = P\ket{j} = 0$ and the condition is satisfied. In the remaining case, where $P_s$ and $P_t$ are equal to $R^{(a)}_s$ and $R^{(a)}_t$ respectively, we recover one of the connected component conditions of $X_{A_a, \ccdot, C_a}$ (up to padding), which is satisfied by assumption.
  \end{proof}
\end{lemma}

We can use this lemma to prove the following proposition, showing that the equivalence between the number of connected components of $X_{A, B, C}$ and of the strange graph $\mathfrak{G}(A, C)$ fails for every $n$. The case for odd $n$ follows by taking the disjoint union with an isolated vertex.

\begin{proposition}\label{prop:conncompcounterex}
  For every even $n \in \N$ there exists a quantum graph on $M_n$ of the form $X_{A, \ccdot, C}$ such that $X_{A, \ccdot, C}$ has twice as many connected components as $\mathfrak{G}(A, C)$.
  \begin{proof}
    Consider the strange graph obtained by taking the disjoint union of $n / 2$ strange edges with phase $\pi$. The strange graph has $n/2$ connected components, while by \cref{lem:conncompsofdisjunion}, the corresponding quantum graph has $n$ connected components.
  \end{proof}
\end{proposition}

However, if $n \geq 3$ then we can still recover the equivalence for the question of connectedness. To show this, we first prove the following lemma, which might be interesting in its own right.

\begin{lemma}\label{lem:connectedequivclassicaledge}
  If $\mathfrak{G}(A, C)$ contains at least one classical edge, then $X_{A, B, C}$ is connected if and only if $\mathfrak{G}(A, C)$ is.
  \begin{proof}
    The forward direction follows contrapositively from \cref{prop:conncompstrangetoxabc}: If $\mathfrak{G}(A, C)$ is disconnected it has at least $2$ connected components, so $X_{A, B, C}$ also has at least $2$ connected components and is thus also disconnected.

    Conversely, suppose that $X_{A, B, C}$ is disconnected. Then in particular $X_{A, \ccdot, C}$ is disconnected. There thus exists a projector $P$ that satisfies
    \begin{equation*}
      \input{diagrams/connected-cond-xac-simpl.tikz},
    \end{equation*}
    cf. \cref{eq:xacconncondsimpl}. We show that there exists a decomposition of $\mathfrak{G}(A, C)$ into $2$ connected components. Consider the following sets of vertices
    \begin{align*}
      C &\coloneqq \{i \in [n] \mid P\ket{i} = \ket{i}\},\\
      C^\bot &\coloneqq \{i \in [n] \mid P\ket{i} = 0\},\\
      C^\sim &\coloneqq [n] \setminus (C \cup C^\bot).
    \end{align*}
    Clearly, $C, C^\bot, C^\sim$ partition $[n]$. We show that edges of $\mathfrak{G}(A, C)$ can only exist inside these sets. Let $i \sim j$. We have to exclude the following cases.
    \begin{itemize}
      \item $i \in C$, $j \in C^\bot$: In this case we have $P\ket{i} = \ket{i} \neq 0$ and $P^\bot\ket{j} = \ket{j}$, so the $A$-term is non-zero. This already excludes classical edges. We moreover have $P\ket{j} = 0$, so the $C$-term is zero and thus cannot cancel out the $A$ contribution. This excludes strange edges as well.
      \item $i \in C$, $j \in C^\sim$: In this case we have $P\ket{i} = \ket{i} \neq 0$ and $P^\bot\ket{j} \neq 0$, which excludes classical edges. We also have $P^\bot\ket{i} = 0$, so the $C$-term is zero. This excludes strange edges.
      \item $i \in C^\bot$, $j \in C^\sim$: Follows from the previous case by considering the condition with $P \leftrightarrow P^\bot$ exchanged.
    \end{itemize}
    The cases with $i$ and $j$ exchanged are follow by undirectedness.
    Finally, we show that there cannot be any classical edges inside $C^\sim$. Indeed, $i, j \in C^\sim$ implies $P\ket{i} \not\in \{0, \ket{i}\}$ and $P\ket{j} \not\in \{0, \ket{j}\}$, and thus also $P^\bot\ket{j} \not\in \{0, \ket{j}\}$. It follows that the $A$-term is non-zero and thus the condition unsatisfiable for classical edges.

    Now by assumption, $\mathfrak{G}(A, C)$ has at least one classical edge. This implies that either $C$ or $C^\bot$ must be non-empty. Without loss of generality assume $C$ is non-empty. Then $C$ and $C^\bot \cup C^\sim$ partition $[n]$ and by our previous observations there is no edge between the two sets. It follows that $\mathfrak{G}(A, C)$ is disconnected, which completes the proof.
  \end{proof}
\end{lemma}

Note that there is no hope to extend \cref{lem:connectedequivclassicaledge} to the number of connected components: The disjoint union of a classical $(n-2)$-clique and a strange edge of phase $\pi$ has $2$ connected components, but the corresponding quantum graph has at least $3$ by \cref{lem:conncompsofdisjunion}. We now prove that connectedness is preserved in general for $n \geq 3$.

\begin{proposition}\label{prop:connectedequivn3}
  For $n \geq 3$, a quantum graph of the form $X_{A, B, C}$ on $M_n$ is connected if and only if $\mathfrak{G}(A, C)$ is.
  \begin{proof}
    The forward direction again follows contrapositively. For the backward direction we may moreover assume by the previous proposition that $\mathfrak{G}(A, C)$ does not have any classical edges. Assume for contradiction that $\mathfrak{G}(A, C)$ is connected but $X_{A, B, C}$ has at least $2$ connected components. By the splitting principle, $X_{A, \ccdot, C}$ also has at least $2$ connected components. There thus exists a projector $P$ such that
    \begin{equation}\label{eq:xacconncondsimpln3}
      \input{diagrams/connected-cond-xac-simpl.tikz}
    \end{equation}
    for all $i, j \in [n]$. We claim that every such projector must have full-coordinate support. Otherwise there exists an $i \in [n]$ such that $P\ket{i} = 0$. Since the strange graph is connected and $P$ is non-zero, there also exists a $j \in [n]$ with $i \sim j$ such that $P\ket{j} \neq 0$. But then \cref{eq:xacconncondsimpln3} cannot be sastisfied. Indeed, the $A$-term is $0$ since $P\ket{i} = 0$, but the $C$-term is not: Per assumption, $\mathfrak{G}(A, C)$ contains no classical edges, so $\mathring{C}_{ij} \neq 0$. At the same time $P\ket{j} \neq 0$ and $P^\bot\ket{i} = \ket{i} \neq 0$.

    Since per assumption $\mathfrak{G}(A, C)$ contains no classical edges, $\mathring{C}_{ij}$ is always non-zero, so we can rewrite \cref{eq:xacconncondsimpln3} as
    \begin{equation*}
      \input{diagrams/connected-cond-simpl-lhs.tikz} \quad=\quad e^{(\pi - \gamma_{ij})i}\quad \input{diagrams/connected-cond-simpl-rhs.tikz}\enspace,
    \end{equation*}
    where we substituted $\mathring{A}_{ji}$ and $\mathring{C}_{ij}$ by their values $1/2$ and $1/2$ times a phase, multiplied both sides by $2$, rotated the input wire to the output, and moved the $C$-term to the right-hand side. This requires that
    \begin{align}
      P\ket{i} &= \alpha P\ket{j}\label{eqs:pconds1}\\
      \overline{P^\bot}\ket{j} &= \beta \overline{P^\bot}\ket{i}\label{eqs:pconds2}
    \end{align}
    with $\alpha\beta = e^{(\pi - \gamma_{ij})i} \eqqcolon c_{ij}$.

    Choose an orthonormal basis $\psi_1, \dots, \psi_k$ of $\img P$ and complete it to an orthonormal basis of $\C^n$ by vectors $\psi_{k + 1}, \dots, \psi_n$. We define a unitary matrix $H$ by $H_{is} = \braket{\psi_s | i}$. The fact that $P$ has full coordinate support implies that every row of $H$ has at least one non-zero entry in the first $k$ columns.
    \cref{eqs:pconds1} now implies
    \begin{equation*}
      \sum_{s = 1}^k \braket{\psi_s | i} \ket{\psi_s} = P \ket{i} = \alpha P\ket{j} = \alpha \sum_{s = 1}^k \braket{\psi_s | j} \ket{\psi_s}.
    \end{equation*}
    Since the $\psi_s$ are orthogonal, this is only the case if
    \begin{equation*}
      H_{is} = \braket{\psi_s | i} = \alpha \braket{\psi_s | j} = \alpha H_{js}.
    \end{equation*}
    for all $s \leq k$. Similarly, by \cref{eqs:pconds2} we have
    \begin{equation*}
      \sum_{t = k + 1}^n \braket{\overline{\psi_t} | j} \ket{\overline{\psi_t}} = \overline{P^\bot} \ket{j} = \beta \overline{P^\bot}\ket{i} = \beta \sum_{t = k+1}^n \braket{\overline{\psi_t} | i} \ket{\overline{\psi_t}},
    \end{equation*}
    which implies
    \begin{equation*}
      \overline{H}_{jt} = \braket{\overline{\psi_t} | j} = \beta \braket{\overline{\psi_t} | i} = \beta \overline{H}_{it}.
    \end{equation*}
    for all $k < t \leq n$. We thus conclude that
    \begin{equation*}
      \frac{H_{is}}{H_{js}} = \frac{\overline{H}_{it}}{\overline{H}_{jt}} c_{ij} \qquad\textsc{and}\qquad \frac{H_{jt}}{H_{it}} = \frac{\overline{H}_{js}}{\overline{H}_{is}} \overline{c_{ij}}
    \end{equation*}
    for all $s \leq k$, $k < t \leq n$ whenever $i \sim j$. This means that if $i \sim j$, the rows $H_{i*}$ and $H_{j*}$ split into two parts: the first $k$ columns and the last $n - k$ columns, which are respectively scalar multiples of each other. Call this property (M). Since $\mathfrak{G}(A, C)$ is connected and $n \geq 3$, there are at least three vertices connected by a path. It follows that the corresponding rows must satisfy property (M). We show that no unitary matrix satisfies this constraint.

    \begin{claim}\label{cl:x2alwaysnonzero}
      Let $x, x'$ be a pair of rows of a unitary matrix that satisfy property (M). Suppose $x$ splits into $x_1$ and $x_2$. Then $x_2 \neq 0$.
      \begin{subproof}
        Suppose otherwise. Since $H$ is unitary, $x$ and $x'$ are orthogonal. We thus have
        $x'_1 x_1\adjoint = x'_1 x_1\adjoint + x'_2 x_2\adjoint = x' x\adjoint = 0$, but $x_1$ is a scalar multiple of $x'_1$ by property (M), so their inner product cannot be $0$.
      \end{subproof}
    \end{claim}

    \begin{claim}\label{cl:nosuchunitary}
      Let $k < n$ and $H$ be a unitary matrix such that every row has at least one non-zero entry in the first $k$ columns. Then $H$ does not contain three rows that pairwise satisfy property (M).
      \begin{subproof}
        Assume for contradiction that there are rows $u, v, w$ pairwise satisfying property (M). This means that
        \begin{align*}
          v_1 = \alpha u_1 &\quad\textsc{and}\quad v_2 = \beta u_2  \\
          w_1 = \gamma u_1 &\quad\textsc{and}\quad w_2 = \delta u_2
        \end{align*}
        Since $H$ is unitary, they must be mutually orthogonal. It follows that
        \begin{align*}
          \overline{\alpha} \norm{u_1}^2 + \overline{\beta} \norm{u_2}^2 = u_1 (\alpha u_1)\adjoint + u_2 (\beta u_2)\adjoint = uv\adjoint = 0 \\
          \overline{\gamma} \norm{u_1}^2 + \overline{\delta} \norm{u_2}^2 = u_1 (\gamma u_1)\adjoint + u_2 (\delta u_2)\adjoint = uw\adjoint = 0
        \end{align*}
        Set $\xi = \norm{u_1}^2 / \norm{u_2}^2$. Note that this is well-defined since $u_2 \neq 0$ by \cref{cl:x2alwaysnonzero}. It is moreover positive, since by assumption $u_1$ contains at least one non-zero entry. Substituting $\xi$ into the orthogonality equations yields
        \begin{equation}\label{eq:betadeltadefxi}
          \beta = -\xi\alpha \quad\textsc{and}\quad \delta = -\xi\gamma.
        \end{equation}
        Now consider the remaining orthogonality constraint, which yields
        \begin{equation*}
          \alpha\overline{\gamma} \norm{u_1}^2 + \beta\overline{\delta} \norm{u_2}^2 = \alpha u_1 \cdot (\gamma u_1)\adjoint + \beta u_2 \cdot (\delta u_2)\adjoint = vw\adjoint = 0.
        \end{equation*}
        By \cref{eq:betadeltadefxi} this is equivalent to
        \begin{equation*}
          \alpha \overline{\gamma} \norm{u_1}^2 + (-\xi\alpha)(-\xi \overline{\gamma})\norm{u_2}^2 = \alpha \overline{\gamma} \norm{u_1}^2 + \xi^2 \alpha\overline{\gamma} \norm{u_2}^2 = 0
        \end{equation*}
        Dividing both sides by $\alpha\overline{\gamma}\norm{u_2}^2$ leaves us with
        \begin{equation*}
          \xi + \xi^2 = 0,
        \end{equation*}
        but this is unsatisfiable for positive $\xi$, contradiction.
      \end{subproof}
    \end{claim}

    We have seen that \cref{eq:xacconncondsimpln3} is only satisfied for all $i, j \in [n]$ if the unitary matrix $H = (\braket{\psi_s | i})_{is}$ contains at least three rows that pairwise satisfy property (M). But $H$ satisfies the constraints of \cref{cl:nosuchunitary} and thus cannot contain such a collection of rows. This is a contradiction, so $X_{A, B, C}$ must be connected.
  \end{proof}
\end{proposition}

% Observe that \cref{prop:connectedequivn3} implies that it is really only isolated strange edges that the break the correspondence between the number of connected components of $X_{A, B, C}$ and the strange graph.

%% file: content/graphtheory/colouring.tex
We now move on to colouring. Recall that a quantum graph is $k$-colourable if there exist $k$ non-zero projectors $P_1, \dots, P_k$ on $\C^n$ that sum to the identity, such that
\begin{equation*}
  \input{diagrams/colouring-cond-mn.tikz}
\end{equation*}
for all $s \in [k]$. There is a trivial lower bound: Every quantum graph can be at most $1$ colourable, which is satisfied if and only if $G$ is the zero map. However, contrary to the case of connected components, there is no trivial upper bound on the number of colours that are needed. We will see that there are quantum graphs that are not colourable at all.

\subsubsection{ABC Graphs}

Since the colouring condition is very similar to that for connected components, it is unsurprising that we can also prove a splitting principle for colouring. In fact, it is proved the exact same way we did for connected components in \cref{ssec:indepres}, so we refrain from repeating the proof here. Merely the conclusion for the chromatic number is different. The splitting principle for colouring states that a collection of projectors is a colouring for $X_{A, B, C}$ if and only if it is a colouring for both $X_{A, \ccdot, C}$ and $X_{\ccdot, B}$. While for the number of connected components we were interested in the maximum possible number of projectors, the chromatic number is equal to the minimum number of projectors needed. We thus obtain the following result.

\begin{proposition}
  We have $\chi(X_{A, B, C}) \geq \max \{\chi(X_{A, \ccdot, C}), \chi(X_{\ccdot, B})\}$.
\end{proposition}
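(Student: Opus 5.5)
The plan is to establish the splitting principle for colourings and then read off the bound from it. The colouring condition for $X_{A, B, C}$ and a family $P_1, \dots, P_k$ is linear in the adjacency operator. Concretely, it says that the map obtained by sandwiching $X_{A, B, C}$ between $P_s$ and $\overline{P_s}$ vanishes for every $s$.

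I will expand $X_{A, B, C}$ as the sum of its $\mathring{A}$-term, its $B$-term and its $\mathring{C}$-term, exactly as was done for connected components in \cref{ssec:indepres}, and then plug standard basis vectors $\ket{i}$, $\ket{j}$ into the right tensor legs. Evaluating the (co)multiplications on the standard basis, the $B$-term only survives when $i = j$. Since $\mathring{A}_{ii} = \mathring{C}_{ii} = 0$, the $\mathring{A}$- and $\mathring{C}$-terms only survive when $i \neq j$.

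The condition for $X_{A, B, C}$ therefore decouples into two independent families of equations:
\begin{itemize}
  \item the $i \neq j$ equations involve only $\mathring{A}$ and $\mathring{C}$, and are exactly the colouring conditions for $X_{A, \ccdot, C}$;
  \item the $i = j$ equations involve only $B$, and are exactly the colouring conditions for $X_{\ccdot, B}$.
\end{itemize}
For this I also need the observation, made before \cref{xa-qgraph-char}, that the adjacency operator of $X_{A, \ccdot, C}$ consists of just the $\mathring{A}$- and $\mathring{C}$-terms, and similarly that $X_{\ccdot, B}$ consists of just the $B$-term. Hence $P_1, \dots, P_k$ is a $k$-colouring of $X_{A, B, C}$ if and only if it is simultaneously a $k$-colouring of $X_{A, \ccdot, C}$ and of $X_{\ccdot, B}$.

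The inequality then follows directly. Suppose $X_{A, B, C}$ has a $k$-colouring with $k = \chi(X_{A, B, C})$. By the splitting principle the same projectors colour $X_{A, \ccdot, C}$, so $\chi(X_{A, \ccdot, C}) \leq k$, and likewise $\chi(X_{\ccdot, B}) \leq k$. If $X_{A, B, C}$ is not colourable at all, with the convention $\chi = \infty$, the inequality holds trivially.

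The only delicate step is checking that the decoupling is genuinely exact. The concern is that cross terms could mix: for fixed $i \neq j$, the $\mathring{A}$- and $\mathring{C}$-contributions appear in the same equation, whereas the $B$-contribution is confined to $i = j$. Both follow from the copying property of the comultiplication on standard basis vectors, as in the connected-components computation. The only difference here is that the same $P_s$ sits on both sides, in place of $P_s$ and $P_t^\perp$, and this does not affect which terms vanish.
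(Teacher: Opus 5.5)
Your proposal is correct and follows the paper's own route. The paper also obtains the colouring splitting principle by repeating the connected-components computation of \cref{ssec:indepres}: standard basis vectors go into the right legs, the $B$-term survives only at $i=j$, and the $\mathring{A}$- and $\mathring{C}$-terms survive only at $i\neq j$. It then reads off the inequality from the fact that $\chi$ is the minimal number of projectors in a colouring, and only the forward half of the splitting is needed. The condition being expanded has the same $P_s$ on the left wire on both sides of the adjacency operator, parallel to $P_s$ and $P_t$ ($s\neq t$) in the connected-components condition.
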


Similarly, an analogous argument as for connected components shows that every $k$-colouring of the strange graph $\mathfrak{G}(A, C)$ induces a $k$-colouring of $X_{A, \ccdot, C}$.

\begin{proposition}\label{prop:xabc-colouring-ub}
  For all $k \in \N$ it holds that if $\mathfrak{G}(A, C)$ is $k$-colourable then so is $X_{A, \ccdot, C}$. In particular, we have $\chi(X_{A, \ccdot, C}) \leq \chi(\mathfrak{G}(A, C))$.
  \begin{proof}
    Let $C_1, \dots, C_k$ be the colour classes of a $k$-colouring of $\mathfrak{G}(A, C)$. We set
    \begin{equation*}
      P_s = \sum_{i \in C_i} \ket{i}\bra{i}.
    \end{equation*}
    Then the $P_s$ are orthogonal projectors summing to the identity and the condition
    \begin{equation*}
      \input{diagrams/xac-colouring-cond.tikz}
    \end{equation*}
    becomes
    \begin{equation*}
      \input{diagrams/xac-colouring-cond-exp.tikz}\enspace.
    \end{equation*}
    Since $C_s$ is a colour class, we have $ii' \not\in E(\mathfrak{G}(A, C))$ for all $i \neq i' \in C_s$ and thus $A_{ii'} = C_{i'i} = 0$ as desired. For $i = i'$, we have $\mathring{A}_{ii'} = \mathring{C}_{ii'} = 0$ by definition. This completes the proof.
  \end{proof}
\end{proposition}

Note that unlike the corresponding result for connected components, this result only holds for ABC graphs with trivial $B$. This is related to the splitting principle: In the case of connected components we were lucky that our translation of the connected components of $\mathfrak{G}(A, C)$ also happened to be a decomposition into connected components of $X_{\ccdot, B}$. We have no such luck for colouring. While it would generally be possible that there exists a different translation that does work, the next section will show that this is not the case.

\subsubsection{AB Graphs}

The first observation we make is that AB graphs can be arbitrarily difficult to colour.

\begin{proposition}\label{prop:xabnotcolourable}
  There exist quantum graphs of the form $X_{A, B}$ that are not colourable.
  \begin{proof}
    Recall that the complete graph $K_n$ satisfies
    \begin{equation*}
      K_n = X_{\tfrac{nJ - I}{n}, \tfrac{nI - J}{n}}
    \end{equation*}
    and is thus of the form $X_{A, B}$. However, it is not colourable: The colouring condition reads
    \begin{equation*}
      \input{diagrams/kn-colouring-cond.tikz}\enspace,
    \end{equation*}
    but the left-hand side has rank $1$, while the right-hand side has rank $n \cdot \rk P_s$. This is unsatisfiable for $n \geq 2$.
  \end{proof}
\end{proposition}

Since every strange graph is at least $n$-colourable, this shows that \cref{prop:xabc-colouring-ub} cannot be generalised to non-trivial $B$. This is a symptom of the colourings for $X_{A, \ccdot, C}$ and $X_{\ccdot, B}$ generally being incompatible; it is not exclusively caused by $B$. Indeed, $X_{\ccdot, B}$ is always at least $n$-colourable.

\begin{proposition}\label{prop:xb-ncolourable}
  Every quantum graph of the form $X_{\ccdot, B}$ is $n$-colourable.
  \begin{proof}
    For $X_{\ccdot, B}$ the colouring condition becomes
    \begin{equation*}
      \input{diagrams/xb-colouring-cond.tikz},
    \end{equation*}
    which is equivalent to
    \begin{equation}\label{eq:xbcolouringderiv}
      \input{diagrams/xb-colouring-cond-deriv.tikz}.
    \end{equation}
    Now let $v_1, \dots, v_n$ be the orthonormal DFT basis of $\C^n$, that is the $k$th entry of $v_j$ is $\exp(jk \cdot 2\mathrm{i}\pi/n)/\sqrt n$. Let $P_s = \ket{v_s}\bra{v_s}$. Plugging in $\overline{v_i}$ into the bottom left- and $v_j$ into the bottom right of \cref{eq:xbcolouringderiv} yields $0$ if $i \neq s$ or $j \neq s$, and $B \circ m(\overline{v_s} \otimes v_s)$ otherwise. Since $m$ is entrywise multiplication on $\C^n$ and the entries of $v_s$ have norm $1$, we have $m(\overline{v_s} \otimes v_s) = \mathbf{1}$. Since we assume $B$ to be loopless, we have $B\mathbf{1} = 0$, and thus $B \circ m(\overline{v_s} \otimes v_s) = 0$. Since both the $\overline{v_i}$ and the $v_i$ form a basis, we conclude that \cref{eq:xbcolouringderiv} holds.
  \end{proof}
\end{proposition}

In this case, the colourability is limited by the dimension of the kernel of $B$.

\begin{proposition}
  It holds that $\chi(X_{\ccdot, B}) \geq n / \dim\ker B$.
  \begin{proof}
    Recall that the colouring condition for $X_{\ccdot, B}$ is equivalent to
    \begin{equation*}
      \input{diagrams/xb-colouring-cond-deriv.tikz}.
    \end{equation*}
    Let $T_s = \img P_s$. The condition requires that the subspaces $T_s^2 = \lspan \{m(\overline{v} \otimes w) \mid v, w \in T_s\}$ must be contained in the kernel of $B$. Now suppose there existed a $k$-colouring $P_1, \dots, P_k$ of $X_{\ccdot, B}$. Since the $P_s$ sum to the identity, there needs to exist an $s$ such that $\rk P_s \geq n/k$. We claim that $\dim T_s^2 \geq \dim T_s$, and thus $\dim \ker B \geq n/k$.

    Without loss of generality assume that for all $j \in [n]$ there exists a $v \in T_s$ such that $v_j \neq 0$. Otherwise we consider $T_s$ as a subspace of $\C^m$ for some suitable $m < n$.

    \begin{claim}
      There exists a $w \in T_s$ such that $w_j \neq 0$ for all $j \in [n]$.
      \begin{subproof}
        Choose some arbitrary $w^{(0)} \in T_s$. Let $i$ be minimal with $w^{(0)}_i = 0$. By assumption, there exists a $u \in T_s$ such that $u_i \neq 0$. Let
        \begin{equation*}
          \alpha \coloneqq \max_i \abs{w^{(0)}_i}
          \qquad\quad
          \beta \coloneqq \min \{\abs{u_i} \mid u_i \neq 0\}
          \qquad\quad
          \gamma \coloneqq \inf \{r \in R^+ \mid r\beta > \alpha\},
        \end{equation*}
        and set $w^{(1)} = w^{(0)} + ru$. Then $w^{(1)} \in T_s$, $w^{(1)}_i \neq 0$ and for all $j \in [n]$, $w^{(0)}_j \neq 0$ implies $w^{(1)}_j \neq 0$. Iterating this procedure for at most $n$ steps yields the desired $w$.
      \end{subproof}
    \end{claim}

    Let $w$ be as in the claim and observe that $T_s^2 \supseteq \{m(\overline{v} \otimes w) \mid v \in T_s\}$. Recall that $m$ is componentwise multiplication, and thus $m(\overline{v} \otimes w) = \overline{v} \bullet w$. This in turn is equal to $W\overline{v}$ for $W = \diag w$. Now $\dim T_s = \dim \overline{T_s}$ and $W$ is invertible. It follows that
    \begin{equation*}
      \dim T_s^2 \geq \dim \{m(\overline{v} \otimes w) \mid v \in T_s\} = \dim \{Wv \mid v \in \overline{T_s}\} = \dim \overline{T_s} = \dim T_s
    \end{equation*}
    as desired. As shown above, this implies that $\dim \ker B \geq \dim T_s^2 \geq \dim T_s \geq n/k$ for any $k$-colouring. In particular, for an optimal $\chi(X_{\ccdot, B})$-colouring, we get
    \begin{equation*}
      \chi(X_{\ccdot, B}) \geq \frac{n}{\dim\ker B}
    \end{equation*}
    as desired.
  \end{proof}
\end{proposition}

For $A$ alone, we recover precisely the chromatic number.

\begin{proposition}
  For all $k \in \N$, it holds that $X_{A, \ccdot}$ is $k$-colourable if and only if $A$ is.
  \begin{proof}
    \cref{prop:xabc-colouring-ub} and \cref{prop:xastrangeisa} establish that $X_{A, \ccdot}$ is $k$-colourable if $A$ is. Conversely, suppose that $P_1, \dots, P_k$ are a $k$-colouring of $X_{A, \ccdot}$. Then per definition, we have
    \begin{equation*}
      \input{diagrams/xa-colouring-cond.tikz}
    \end{equation*}
    for all $s \in [k]$. This is equivalent to
    \begin{equation*}
      \input{diagrams/xa-colouring-cond-ij.tikz}
    \end{equation*}
    for all $i, j \in [n]$. We can simplify the (co)multiplications, which yields
    \begin{equation*}
      \input{diagrams/xa-colouring-cond-ij-rewr.tikz}.
    \end{equation*}
    This in turn is only satisfied if $A_{ij} \neq 0$ implies $P\ket{i} = 0$ or $P\ket{j} = 0$. Let $C_1, \dots, C_k \subseteq [n]$ be defined as
    \begin{equation*}
      C_s = \compl{\{i \in [n] \mid P_s\ket{i} = 0\}}
    \end{equation*}
    The $C_s$ are valid colour classes: Assume for contradiction that there were $i, j \in C_s$ with $i \sim j$. Then $A_{ij} \neq 0$ and thus $P_s\ket{i} = 0$ or $P_s\ket{j} = 0$. This implies that $i \not\in C_s$ or $j \not\in C_s$, contradicting our assumption. They also cover $[n]$. Suppose there was some $x \in [n]$ such that $x \not\in C_s$ for all $s$. Then we have $P_s\ket{x} = 0$ for all $s$ and thus
    \begin{equation*}
      \ket{x} = \identity\ket{x} = \sum_s P_s\ket{x} = 0 \neq \ket{x}.
    \end{equation*}
    We have proved that there exist colour classes $C_1, \dots, C_k$ that jointly cover $[n]$. To obtain a graph colouring, we let
    \begin{equation*}
      \tilde{C}_s = C_s \setminus \bigcup_{t > s} C_t
    \end{equation*}
    Then the $\tilde{C}_s$ form a partition of $[n]$ and we have $\tilde{C}_s \subseteq C_s$, so the colouring condition is still satisfied. Consequently, $\tilde{C}_1, \dots, \tilde{C}_k$ is a $k$-colouring of $A$.
  \end{proof}
\end{proposition}

\subsubsection{The Symmetric and Antisymmetric Cases}

We now turn our attention to the Symmetric and Antisymmetric quantum graphs. The Antisymmetric quantum graph turns out to be $n$-colourable, which we show by another rank-based argument.

\begin{proposition}
  It holds that $\chi(\Gasym) = n$.
  \begin{proof}
    In \cref{prop:qgraphs-as-xabc} we have seen that $\Gasym$ is of the form $X_{A, \ccdot, C}$, and by \cref{prop:gsymgasymstrange} its corresponding strange graph is the complete graph of strange edges with phase $\pi$. \cref{prop:xabc-colouring-ub} thus implies that $\chi(\Gasym) \leq n$. On the other hand, the colouring condition for $\Gasym$ is equivalent to
    \begin{equation*}
      \input{diagrams/gasym-colouring-cond.tikz}\enspace.
    \end{equation*}
    The left-hand side has rank $1$, while the right-hand side has rank $(\rk P_s)^2$. The only way to satisfy the equation is thus when $\rk P_s = 1$ for all $s$, which implies that $\chi(\Gasym) \geq n$.
  \end{proof}
\end{proposition}

  For the Symmetric quantum graph we find a similar situation as for the connected components of $\Gasym$, where $n = 2$ is a special case. We also find that for $n \geq 3$, $\Gsym$ is not colourable at all.

\begin{proposition}
  For $n = 2$, $\Gsym$ is $2$-colourable. For $n \geq 3$, $\Gsym$ is not colourable.
  \begin{proof}
    The colouring condition for $\Gsym$ is equivalent to
    \begin{equation*}
      \input{diagrams/gsym-colouring-cond.tikz}\enspace.
    \end{equation*}
    Note that the right-hand side has rank $n \cdot \rk P_s$, while the left-hand side has rank at most $1 + (\rk P_s)^2$. An elementary computation shows that this is only compatible if $n = 2$ or $\rk P_s = n$. Suppose $n \neq 2$ and $\rk P_s = n$. The only projector of rank $n$ is the identity, in which case the right-hand side becomes a multiple of the identity on $\C^n \otimes \C^n$, while the left-hand side becomes the projector onto the symmetric subspace of $\C^n \otimes \C^n$. These are obviously different maps, so the equality cannot be satisfied for $n \neq 2$. It follows that $\Gsym$ is not colourable for $n \geq 3$.

    On the other hand, suppose $n = 2$. We know that $P_s \not\in \{0, \identity\}$, so we must have $\rk P_s = 1$. This means that we can rewrite the condition equivalently as
    \begin{equation*}
      \input{diagrams/gsym-colouring-cond-rk1.tikz}\enspace.
    \end{equation*}
    Moving wires and simplifying yields
    \begin{equation*}
      \input{diagrams/gsym-colouring-cond-rk1-rewr.tikz}\enspace.
    \end{equation*}
    We can factor the $P_s$ term, in which case the condition simplifies to
    \begin{equation*}
      \frac{1}{2} (\ket{\overline{\psi_s}}\bra{\overline{\psi_s}} + \ket{\psi_s}\bra{\psi_s}) = \frac{1}{2} \identity,
    \end{equation*}
    which is satisfied for $\psi_s = (\ket{1} + i\ket{2})/\sqrt 2$.
  \end{proof}
\end{proposition}

\subsubsection{The Complete and Empty Graphs}

The cases of the complete and empty graphs are now straightforward. Since the adjacency operator of $\compl{K_n}$ is the zero map, the identity satisfies the colouring condition, which implies that $\chi(\compl{K_n}) = 1$. On the other hand, we have shown in \cref{prop:xabnotcolourable} that the complete graph $K_n$ is not colourable at all. This concludes our investigation of the chromatic number.

\begin{proposition}
  We have $\chi(\compl{K_n}) = 1$, while $K_n$ is not colourable.
\end{proposition}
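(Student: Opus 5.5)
The statement has two parts: the edgeless graph has chromatic number $1$, and the complete graph admits no colouring at all. I would treat them separately, since the first is immediate from the definitions and the second is already in hand.

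\textbf{The edgeless graph.} By \cref{prop:qgraphs-as-xabc}, $\compl{K_n} = X_{\ccdot,\ccdot}$, and its adjacency operator is the zero map. Its operator space is therefore $S = \img (0)^R = \{0\}$. I would take the single projector $P_1 = \identity$, which trivially sums to the identity. The condition $P_1 X P_1 = 0$ for all $X \in S$ then holds vacuously, so $\compl{K_n}$ is $1$-colourable. Since no quantum graph can have a $0$-colouring (the empty family cannot sum to $\identity$), this gives $\chi(\compl{K_n}) = 1$.

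\textbf{The complete graph.} Here I would simply invoke \cref{prop:xabnotcolourable}, whose proof exhibits exactly $K_n$ as a non-colourable $X_{A,B}$ graph. In case one wants the argument spelled out: take any family $P_1,\dots,P_k$ summing to $\identity$ and pick some nonzero $P_s$. The colouring condition for $K_n$ equates a rank-$1$ term, coming from the projector onto the identity matrix, with a term of rank $n \cdot \rk P_s \geq n \geq 2$. This is impossible, so no $k$-colouring exists for any $k$.

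\textbf{Where the content lies.} The only substantive step is the rank comparison in the $K_n$ case, and that comparison is already carried out in the proof of \cref{prop:xabnotcolourable}. The edgeless case requires nothing beyond the observation that the zero map yields the trivial operator space.
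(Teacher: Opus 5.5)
Your proposal is correct and matches the paper's argument. The zero adjacency operator of $\compl{K_n}$ makes $P_1 = \identity$ a valid $1$-colouring, and the non-colourability of $K_n$ is taken directly from \cref{prop:xabnotcolourable}, via the same comparison of a rank-$1$ term against a term of rank $n \cdot \rk P_s$.
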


\subsubsection{Notes on (Non-)Colourability of Quantum Graphs}

We have seen that there exist quantum graphs that are not classically colourable. On first glance, this might seem like a flaw in the definition of $k$-colourability. We argue that this is not the case: the obstruction is intrinsic to the non-commutativity of the quantum edge space. A classical $k$-colouring requires orthogonal projections $P_1, \dots, P_k$ summing to the identity and satisfying $P_sXP_s = 0$ for every $X \in S$. For quantum graphs whose adjacency operator has large rank (see \cref{prop:xabnotcolourable}, where the colouring condition leads to an irreconcilable rank mismatch) no non-zero projection can satisfy this simultaneously for all edges. This is natural: In the classical setting, adjacency matrices have entries in $\{0,1\}$ and one can always assign one colour per vertex. For quantum graphs, however, the edge space $S$ can be non-commutatively too dense for any classical partition of the identity to separate.

In \cite{brannan_quantumclassical_2022}, see also \cite{ganesan_spectral_2023}, the authors also propose a definition of quantum colouring. While the colouring notion we used so far corresponds to the existence of a \emph{classical} winning strategy in a suitably chosen non-local game played on the quantum graph, quantum colourings correspond to \emph{quantum} strategies. More precisely, the classical chromatic number $\chi$ restricts to an ancilla algebra $\mathcal{N} = \mathbb{C}$, whereas the quantum chromatic number $\chi_q$ allows any finite-dimensional von Neumann algebra $\mathcal{N}$. The entanglement encoded in $\mathcal{N}$ provides the players with shared quantum resources, enabling correlations that are impossible classically and making every quantum graph on $M_n$ finitely colourable.

\begin{proposition}[Theorem 6.6. in \cite{brannan_quantumclassical_2022}]
  For every quantum graph $G$ on $M_n$, we have $\chi_q(G) \leq n^2$.
\end{proposition}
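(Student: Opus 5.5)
The plan is to build an explicit quantum colouring with $n^2$ colours, using the ancilla algebra $\mathcal{N} = M_n$ and a maximally entangled Bell basis. Rather than re-derive the non-local game framework of \cite{brannan_quantumclassical_2022}, I will work with its operator-space characterisation of quantum colourings. A quantum $c$-colouring of a quantum graph with operator space $S = \img G^R \subseteq M_n$ is a family of projections $P_1, \dots, P_c \in M_n \otimes \mathcal{N}$, for some finite-dimensional von Neumann algebra $\mathcal{N}$, such that
\begin{equation*}
  \sum_a P_a = \identity \qquad\text{and}\qquad P_a (X \otimes \identity_{\mathcal{N}}) P_a = 0 \quad\text{for all } X \in S,\ a \in [c].
\end{equation*}
This is the classical definition of \cref{ssec:graphprops} with $\C$ replaced by $\mathcal{N}$. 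The only property of $S$ I intend to use is that it is loopless. By \cref{prop:qgraphpropsopspace}, this means every $X \in S$ is traceless.

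First, I fix the generalised Pauli (Heisenberg--Weyl) unitaries $W_{(p,q)} = Z^p Y^q$ for $(p,q) \in \mathbb{Z}_n^2$, where $Y$ is the cyclic shift and $Z$ the clock matrix. These $n^2$ unitaries are orthogonal in the Hilbert--Schmidt inner product. Let $\Omega = n^{-1/2} \sum_i \ket{i} \otimes \ket{i}$ be the maximally entangled vector. The vectors $\Omega_a = (W_a \otimes \identity)\Omega$ then form an orthonormal basis of $\C^n \otimes \C^n$: the Bell basis. This follows because $\braket{\Omega_a | \Omega_b} = \tfrac1n \Tr(W_a\adjoint W_b) = \delta_{ab}$, and there are exactly $n^2$ of them. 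I therefore set $P_a = \ket{\Omega_a}\bra{\Omega_a}$. These are $n^2$ rank-one projections in $M_n \otimes M_n$ summing to the identity.

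The key computation uses that $P_a$ has rank one, which gives
\begin{equation*}
  P_a (X \otimes \identity) P_a = \braket{\Omega_a | (X \otimes \identity) \Omega_a}\, P_a = \tfrac{1}{n}\Tr\bigl(W_a\adjoint X W_a\bigr)\, P_a = \tfrac{1}{n}\Tr(X)\, P_a .
\end{equation*}
This vanishes for every traceless $X$, and hence for every $X \in S$. In the diagrammatic language, this is the statement that inserting $X$ on one leg of a cup and closing it with a cap produces the trace loop. This gives a quantum $n^2$-colouring, so $\chi_q(G) \leq n^2$.

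The main obstacle is not the computation, which is elementary, but matching conventions. I must check that the definition of $\chi_q$ in \cite{brannan_quantumclassical_2022} reduces to the operator-space condition above with the ancilla acting by $X \otimes \identity$, and not, say, by a transposed or conjugated copy. I must also check that their standing assumptions on quantum graphs agree with the loopless setting used here; if their setting instead has loops at every vertex, I would use the correspondence of \cref{prop:looplessallloopsequiv} to pass to the traceless part $S \ominus \C\identity$. If the conventions differ by a conjugation, I would replace $\Omega_a$ by its conjugate. The trace identity is unaffected by this change, so the bound $n^2$ persists.
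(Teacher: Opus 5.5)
Your proposal is correct and is essentially the argument the paper points to. The paper gives no proof of its own: it cites \cite[Theorem 6.6]{brannan_quantumclassical_2022} and notes that the $n^2$ colouring projections come from a ``shift and multiply'' unitary error basis, of which your clock-and-shift Bell-basis projections $P_a = \ket{\Omega_a}\bra{\Omega_a}$ are the standard instance, and your identity $P_a(X \otimes \identity)P_a = \tfrac{1}{n}\Tr(X)\,P_a$ colours the complete quantum graph $K_n$ and hence every loopless quantum graph, since all of their operator spaces consist of traceless matrices.
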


The bound $n^2$ is tight: It holds that $\chi_q(K_n) = n^2 = \dim M_n$ \cite{brannan_quantumclassical_2022, ganesan_spectral_2023}. By constrast, we have seen in \cref{prop:xabnotcolourable} that $K_n$ is not classically colourable. In particular, $K_n$ exhibits a strict separation
\begin{equation*}
  \chi(K_n) = \infty \;>\; n^2 = \chi_q(K_n),
\end{equation*}
demonstrating that quantum entanglement yields a decisive advantage in the colouring game for quantum graphs. The $n^2$ projections achieving the quantum colouring are given by a ``shift and multiply'' unitary error basis for $M_n$, see \cite[Theorem 6.6]{brannan_quantumclassical_2022} for details.

%% file: content/graphtheory/indepsets.tex
Recall that the notion of independent sets for quantum graphs was introduced in \cref{def:indepsets} via the operator space point of view: A \emph{$k$-independent set} of a quantum graph $G$ is a projector $P \in M_n$ of rank $k$ such that $PSP \subseteq \C P$, where $S$ is the operator space associated to $G$. Clearly, any rank one projector $P = \ket{v}\bra{v}$ has this property, hence every quantum graph $G$ has $\alpha(G) \geq 1$.

\medskip

Before moving on to the study of the independent sets of quantum graphs with symmetry, let us comment on the fact that the independent set condition from \cref{def:indepsets} is very close to the notion of \emph{isotropic subspace} introduced in \cite{bei2021independent}: A subspace $U \subseteq \mathbb C^n$ is an isotropic subspace of an alternating matrix space
\begin{equation*}
  \mathcal A \subseteq \{X \in M_n \, : \, X^\top = -X\}
\end{equation*}
if, for any $A \in \mathcal A$ and any $u,u' \in U$, we have $u\transpose A u' = 0$. Comparing this definition with the independent set definition in our work, we spot two important differences:
\begin{itemize}
  \item On the one hand, we only require that $PSP$ is a multiple of $P$, not necessarily $P S P = 0$ as in the definition of the isotropic subspace from \cite{bei2021independent}.
  \item On the other hand, we work exclusively in the complex case scenario, so the forms we are considering are sesquilinear, not bilinear: We would work with conditions of the form $u\adjoint A u' = 0$ instead of $u\transpose A u' = 0$.
\end{itemize}
Although not strictly equivalent, these conditions are closely related: In \cite[Theorem 1.3]{bei2021independent}, the authors show that the independence number of a graph $G$ is equal to the maximum dimension of an isotropic subspace of the alternating matrix space $\mathcal A(G) = \{\ket{i}\bra{j} - \ket{j}\bra{i} \mid i \sim j\}$, a result similar to our \cref{prop:alpha-equal-strange-AC}. Note that $\mathcal{A}(G)$ is precisely the operator space associated to $X_{G/2, \ccdot, -G/2}$. A similar comment applies to the case of the coloring graph parameters.

\subsubsection{ABC Graphs}

Just as for colourings and connected components, it turns out that the independent set condition for $X_{A, B, C}$ splits into independent conditions for the $AC$ part and the $B$ part. Indeed, a projector $P$ witnesses an independent set of $X_{A, B, C}$ if
\begin{equation*}
  \input{diagrams/xabc-indepset-cond.tikz}\enspace.
\end{equation*}
for all $x \in M_n$. Plugging in the standard basis for $x$ and simplifying, we get
\begin{equation*}
  \input{diagrams/xabc-indepset-cond-exp.tikz}\enspace.
\end{equation*}
For $i \neq j$, the $B$-term is zero and we recover the independent set condition for $X_{A, \ccdot, C}$. For $i = j$, we have $\mathring{A}_{ji} = \mathring{C}_{ij} = 0$, and we recover the independent set condition for $X_{\ccdot, B}$. It thus follows that an independent set for $X_{A, B, C}$ must simultaneously be an independent set of $X_{A, \ccdot, C}$ and $X_{\ccdot, B}$. We thus get the following result.

\begin{proposition}\label{prop:xabcindepsetjointbounds}
  We have $\alpha(X_{A, B, C}) \leq \min\{\alpha(X_{A, \ccdot C}), \alpha(X_{\ccdot, B}))\}$.
\end{proposition}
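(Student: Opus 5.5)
The plan is to make the splitting observation preceding the statement precise at the level of operator spaces, and then use that an independent set of a larger operator space is one of any subspace. By \cref{prop:adjoprealignedisproj} and the direct sum decomposition \cref{eq:xbac-directsum}, the operator space of $X_{A, B, C}$ is $S = \img X_{B, A, C}$. The first step is to check that
\begin{equation*}
  S = S_{AC} \oplus S_B, \qquad S_{AC} = \img X_{\diag A, A, C}, \quad S_B = \img X_{B, \diag B},
\end{equation*}
where $S_{AC}$ and $S_B$ are the operator spaces of $X_{A, \ccdot, C}$ and $X_{\ccdot, B}$. The summand $B$ in \cref{eq:xbac-directsum} acts on the diagonal matrices $\lspan\{\ket{i}\bra{i}\}$. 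The $2\times 2$ blocks act on $\lspan\{\ket{i}\bra{j}, \ket{j}\bra{i}\}$ for $i<j$. Hence $S_B$ consists of diagonal matrices and $S_{AC}$ of matrices with zero diagonal.

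One bookkeeping point needs care: the diagonal of $A, C$ must be replaced by the trivial value. For the loopless graphs under consideration this value is $0$, as remarked before \cref{xa-qgraph-char}. That choice contributes nothing to the diagonal block, so $X_{A,\ccdot,C}$ has operator space exactly $S_{AC}$. Likewise, $X_{\ccdot, B}$ has zero $AC$-blocks, so its operator space is exactly $S_B$.

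Once the decomposition is in hand, the inequality is immediate from \cref{def:indepsets}. Suppose $P$ is a $k$-independent set of $X_{A,B,C}$, so $PSP \subseteq \C P$. Since $S_{AC} \subseteq S$, we get $PS_{AC}P \subseteq PSP \subseteq \C P$, so $P$ is a $k$-independent set of $X_{A,\ccdot,C}$. The same argument with $S_B \subseteq S$ shows $P$ is a $k$-independent set of $X_{\ccdot, B}$. Taking $k = \alpha(X_{A,B,C})$ gives $\alpha(X_{A,B,C}) \leq \alpha(X_{A,\ccdot,C})$ and $\alpha(X_{A,B,C}) \leq \alpha(X_{\ccdot,B})$, hence the bound by the minimum.

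Alternatively, one can work with the diagrammatic condition stated just before the proposition. Plugging $x = \ket{i}\bra{j}$, the case $i \neq j$ kills the $B$-term and the case $i=j$ kills the $\mathring{A}, \mathring{C}$ terms. Each case is then literally the independent set condition of the respective summand graph, evaluated on a spanning set of $M_n$. I would present the operator space version, since the containment argument is cleaner there.

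The only step needing real care is the first: identifying $\img X_{B,A,C}$ with the direct sum of the images of $X_{\diag B, A, C}$ (with zero diagonal) and $X_{B,\diag B}$. In particular, the diagonal conventions $\diag A = \diag B = \diag C$ must not make the "trivial" parts overlap. I would handle this by writing \cref{eq:xbac-directsum} explicitly in the basis $\{\ket{i}\bra{j}\}$ and reading off that the diagonal entries are governed solely by $B$.
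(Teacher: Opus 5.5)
Your proposal is correct and is essentially the paper's argument: the paper expands the independent-set condition on the basis $x = \ket{i}\bra{j}$ and observes that off-diagonal $x$ only see the $\mathring{A}, \mathring{C}$ terms while diagonal $x$ only see the $B$ term, which is exactly your decomposition $S = S_{AC} \oplus S_B$. Your operator-space phrasing via $P S_{AC} P \subseteq P S P \subseteq \C P$ is the one the paper itself uses for the clique analogue in \cref{prop:clique-splitting}, and your reading of $X_{A, \ccdot, C}$ as the $\mathring{A}, \mathring{C}$ part with zero diagonal matches the paper's convention.
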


For the independence number, this result is particularly useful, since we can determine $\alpha(X_{A, \ccdot, C})$ exactly.

\begin{proposition}\label{prop:indepnumdetbystrange}
  For quantum graphs of the form $X_{A, \ccdot, C}$ we have $\alpha(X_{A, \ccdot, C}) = \alpha(\mathfrak{G}(A, C))$.
  \begin{proof}
    We first show that $\alpha(X_{A, \ccdot, C}) \geq \alpha(\mathfrak{G}(A, C))$. We show the stronger results that every independent set of $\mathfrak{G}(A, C)$ induces an independent set of $X_{A, \ccdot C}$ of the same size. By realigning, we find that the projector onto the operator space corresponding to $X_{A, \ccdot, C}$ is given by
    \begin{equation*}
      \input{diagrams/xac-proj.tikz}.
    \end{equation*}
    Now let $C$ be an independent set of $\mathfrak{G}(A, C)$ of size $k$ and let
    \begin{equation*}
      P = \sum_{c \in C} \ket{c} \bra{c}.
    \end{equation*}
    Then $P$ is a rank-$k$ projector and the independent set condition becomes
    \begin{equation*}
      \input{diagrams/xac-indepset-cond.tikz}\enspace,
    \end{equation*}
    which is equivalent to
    \begin{equation}\label{eq:xac-indepset}
      \input{diagrams/xac-indepset-cond-rewr.tikz}\enspace.
    \end{equation}
    Now suppose $c \neq c'$. Since $c, c'$ are part of an independent set, we have $c \not\sim c'$ and thus $\mathring{A}_{c'c} = \mathring{C}_{c'c} = 0$. The same holds for $c = c'$, since $\mathring{A}$ and $\mathring{C}$ have no diagonal. It follows that \cref{eq:xac-indepset} is satisfied for $c_x = 0$, and thus $P$ witnesses an independent set of $X_{A, \ccdot, C}$.

    Now suppose $P$ is a rank-$k$ projector witnessing an independent set of $X_{A, \ccdot, C}$.
    Without loss of generality, we assume that $k > 1$.
    We show that there exists a corresponding independent set of $\mathfrak{G}(A, C)$.
    To see this, we rewrite the independent set condition by plugging in a basis of $M_n$ for $x$, concretely the outer products formed by the standard basis of $\C^n$.
    \begin{equation*}
      \input{diagrams/xac-indepset-cond-ij.tikz}
    \end{equation*}
    Simplifying the (co)multiplications and moving tensor legs yields
    \begin{equation}\label{eq:indepsetijrewr}
      \input{diagrams/xac-indepset-cond-ij-rewr.tikz}\enspace.
    \end{equation}
    We now choose our independent set as $X = \compl{\{i \in [n] \mid P\ket{i} = 0 \}}$. Then we have $\abs{X} \geq n - \dim \ker P = \rk P = k$.  Suppose then that $i \sim j$ in $\mathfrak{G}(A, C)$. If \cref{eq:indepsetijrewr} is satsfied for $c_{ij} = 0$ then it is straightforward to see that either $P\ket{i} = 0$ or $P\ket{j} = 0$ and thus $i \not\in X$ or $j \not\in X$. Suppose then that the equation is satisfied for $c_{ij} \neq 0$. Dividing by $c_{ij}$ yields
    \begin{equation*}
      \alpha P\ket{i}\bra{j}P + \beta P\ket{j}\bra{i}P = P.
    \end{equation*}
    Since $P$ is a projector, we must have
    \begin{align*}
      (\alpha P\ket{i}\bra{j}P + \beta P\ket{j}\bra{i}P)^2 &= \alpha^2 P_{ji} P\ket{i}\bra{j}P + \alpha\beta P_{jj} P\ket{i}\bra{i}P + \beta\alpha P_{ii} P\ket{j}\bra{j}P + \beta^2 P_{ij} P\ket{j}\bra{i}P\\
                                                           &= \alpha P\ket{i} \big(\alpha P_{ji} \bra{j}P + \beta P_{jj} \bra{i}P\big) + \beta P\ket{j} \big(\beta P_{ij} \bra{i}P + \alpha P_{ii} \bra{j}P\big)\\
                                                           &\overset{!}{=} \alpha P\ket{i} \bra{j}P + \beta P \ket{j}\bra{i}P
    \end{align*}
    Since $i \sim j$, we know that $\alpha = \mathring{A}_{ji} / c_{ij} \neq 0$. The equation is thus only satisfied if $\beta = 0$, $P_{jj} = 0$, or $\bra{i}P = 0$. If $\bra{i}P = 0$ we have $P\ket{i} = 0$ and thus $i \not\in X$. If $P_{jj} = 0$, then $\norm{P\ket{j}}^2 = \bra{j}PP\ket{j} = \bra{j}P\ket{j} = 0$ and thus $P\ket{j} = 0$ and $j \not\in X$. Suppose then that $\beta = 0$. Then we have
    \begin{equation*}
      P = \alpha P\ket{i}\bra{j}P,
    \end{equation*}
    which implies that $\rk P = 1$, contradicting our assumption. It follows that $X$ is indeed an independent set, which completes the proof.
  \end{proof}
\end{proposition}

\subsubsection{AB Graphs}

An immediate corollary of \cref{prop:indepnumdetbystrange} is that the independence number of $X_{A, \ccdot}$ coincides with that of $A$.

\begin{proposition}\label{prop:alpha-equal-strange-AC}
  It holds that $\alpha(X_{A, \ccdot}) = \alpha(A)$.
  \begin{proof}
    By \cref{prop:xastrangeisa} we have $A \cong \mathfrak{G}(A, \ccdot)$ and by \cref{prop:indepnumdetbystrange} we have $\alpha(X_{A, \ccdot}) = \alpha(\mathfrak{G}(A, \ccdot))$.
  \end{proof}
\end{proposition}

While we cannot determine the independence number of $X_{\ccdot, B}$ precisely, we can give strong bounds. We will see that the bounds are not only tight, but the upper and lower bounds coincide for certain $X_{\ccdot, B}$. Concretely, it turns out that the independence number is closely related to both the rank of $B$ and the matrix representation of $B$ in the standard basis.

\begin{definition}
  Let $X$ be an $n \times n$ matrix. We denote by
  \begin{equation*}
    \operatorname{EqRows}(X) \coloneqq \max \{k \in [n] \mid \exists j_1 <  \cdots < j_k \in [n], X_{j_1*} = X_{j_2*} = \dots = X_{j_k*} \}
  \end{equation*}
  the maximum number of equal rows of $X$.
\end{definition}

Towards proving our bounds, we begin by rephrasing the independent set condition for $X_{\ccdot, B}$. By realigning the adjacency operator, we find that the projector onto the corresponding operator space is given by
\begin{equation*}
  \input{diagrams/xb-proj.tikz}.
\end{equation*}
The independent set condition thus becomes
\begin{equation*}
  \input{diagrams/xb-indepset-cond.tikz}\enspace.
\end{equation*}
Like we did for $X_{A, \ccdot, C}$, we can plug in the standard basis for $x$, which yields
\begin{equation*}
  \input{diagrams/xb-indepset-cond-rewr.tikz}\enspace.
\end{equation*}
For $i \neq j$, this is satisfied for $c_{ij} = 0$. Otherwise, by bending the right wire downwards, we find that the condition is equivalent to
\begin{equation}\label{eq:xbindebdi}
  \input{diagrams/xb-indepset-cond-leftmul.tikz}\enspace.
\end{equation}
Let $D_i \colon \C^n \to \C^n$ be left-multiplication by $B\ket{i}$. Then in the standard basis, we have $D_i = \diag(B\ket{i})$, and equation \cref{eq:xbindebdi} becomes
\begin{equation}\label{eq:xbindepdirest}
  PD_iP = \beta_i P.
\end{equation}

This phrasing of the independence condition suggests to construct $P$ explicitly as follows.

\begin{proposition}
  Let $B$ be a projector. Every set of equal rows of $B$ induces an independent set of $X_{\ccdot, B}$.
  \begin{proof}
    Suppose $B$ contained $k$ equal rows, indexed by $j_1, \dots, j_k$. This means that $B_{j_s i} = B_{j_{t} i}$ for all $i \in [n]$ and $s, t \in [k]$. The rank-$k$ projector
    \begin{equation*}
      P = \sum_{s \in [k]} \ket{j_s} \bra{j_s}
    \end{equation*}
    then satisfies
    \begin{equation*}
      P D_i P = \sum_{s,t \in [k]} \ket{j_s}\braket{j_s | D_i | j_{t}}\bra{j_{t}} = \sum_{s \in [k]} B_{j_si} \ket{j_s}\bra{j_s} = \beta_i P
    \end{equation*}
    as desired.
  \end{proof}
\end{proposition}

We hence get the following corollary regarding the independence number of $X_{\ccdot, B}$.

\begin{corollary}\label{cor:eqrowsindepbound}
  We have $\alpha(X_{\ccdot, B}) \geq \operatorname{EqRows}(B)$.
\end{corollary}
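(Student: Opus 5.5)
The corollary follows directly from the preceding proposition, so I would simply make explicit how the maximum number of equal rows becomes a rank. Let $k = \operatorname{EqRows}(B)$. By definition there are indices $j_1 < \dots < j_k$ in $[n]$ with $B_{j_1*} = \dots = B_{j_k*}$; since $k \geq 1$ always (a single row is trivially a set of equal rows), this set is non-empty.

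Applying the preceding proposition to these rows gives the projector $P = \sum_{s \in [k]} \ket{j_s}\bra{j_s}$. It satisfies $PD_iP = \beta_i P$ for all $i \in [n]$ with $\beta_i = B_{j_1 i}$, which is exactly condition \cref{eq:xbindepdirest}. The next step is to check that this suffices for $P$ to be an independent set. From the derivation above, the condition for the basis elements $\ket{i}\bra{j}$ with $i \neq j$ holds automatically with $c_{ij} = 0$, and for $i = j$ it is equivalent to \cref{eq:xbindepdirest}. Since the $\ket{i}\bra{j}$ span $M_n$ and the condition $PSP \subseteq \C P$ is linear in $x$, $P$ satisfies \cref{def:indepsets} for $X_{\ccdot, B}$.

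The indices $j_s$ are distinct, so $P$ is a sum of $k$ orthogonal rank-one projectors and hence has rank exactly $k$. By the definition of the independence number as a maximum, $\alpha(X_{\ccdot, B}) \geq k = \operatorname{EqRows}(B)$.

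There is no real obstacle here. The only points that need care are the passage from the basis-wise condition to all $x \in M_n$, which is linearity, and the fact that the rank equals the number of chosen rows.
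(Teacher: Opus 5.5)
Your proposal is correct and follows the paper's argument. The paper obtains the corollary directly from the preceding proposition, which takes a maximal set of equal rows, builds the same projector $P = \sum_{s} \ket{j_s}\bra{j_s}$, and checks $PD_iP = \beta_i P$. Your remarks on linearity in $x$ and on $\rk P = k$ only spell out what the paper leaves implicit.
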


We can also lower bound $\alpha(X_{\ccdot, B})$ in terms of $\rk B$. The proof is a nice application of \emph{Tverberg's theorem} \cite{tverberg1966generalization}, which states that any set of at least $1+(k-1)(r+1)$ vectors in $\R^r$ can be partitioned into $k$ groups whose convex hulls share a common point. For what follows, it will be convenient to rephrase \cref{eq:xbindepdirest} in terms of isometries instead of projectors. Concretely, $X_{\ccdot, B}$ has a $k$-independent set if and only if there exists an isometry $V\colon \C^k \to \C^n$ such that
\begin{equation}\label{eq:xbindepirestiso}
  V\adjoint D_i V = \beta_i \identity_k
\end{equation}
\cref{eq:xbindepdirest} follows from \cref{eq:xbindepirestiso} by multiplying both sides from the left with $V$ and from the right with $V\adjoint$. Conversely, any isometry $V$ with $\img V = \img P$ satisfies $P = VV\adjoint$, so multiplying \cref{eq:xbindepdirest} from the left with $V\adjoint$ and from the right with $V$ recovers \cref{eq:xbindepirestiso}.

% In the case of the quantum graph $X_{\cdot, B}$, the corresponding operator system contains only diagonal matrices, with the vectors in the image of $B$ on the diagonal. We can immediately apply \cite[Lemma 2.9]{weaver2019quantum} to our setting to obtain the upper bound below. Note that the proof of this result makes use of \emph{Tverberg's theorem},  Tverberg's theorem \cite{tverberg1966generalization} is a central result in combinatorics \cite{barany2016tverbergs} that has applications in many areas, including quantum graphs \cite{weaver2017quantum,weaver2019quantum} and quantum information theory \cite{knill2000theory, li2011generalized,loulidi2021compatibility}.

\begin{proposition}\label{prop:indep-number-LB}
  It holds that
  \begin{equation*}
    \alpha(X_{\ccdot, B}) \geq 1 + \left\lfloor \frac{n-1}{\rk B + 1} \right\rfloor.
  \end{equation*}
  \begin{proof}
    First, note that since we assume $X_{\ccdot, B}$ to be undirected, it holds by \cref{prop:xabc-qgraph-char} that $\overline{B} = B$. Let $r \coloneqq \rk B$ and let $k$ be the right hand side of the ineqality in the statement. Our goal is to construct an isometry $V: \C^k \to \C^n$ such that
    \begin{equation}\label{eq:isometry-indep-set}
      V^\dag \diag(B_{*i})V = \beta_i I_k \qquad \forall i \in [r],
    \end{equation}
    for some $\beta_i \in \R$; actually we are going to construct below a \emph{real} isometry $V$. The condition above implies, for all $s \in [k]$,
    $$\sum_{j=1}^n B_{ji}\abs{V_{js}}^2 = \beta_i \qquad \forall i \in [r].$$
    Since for a fixed $s$, the vector $(|V_{js}|^2)_{j \in [n]}$ is a probability vector, we can interpret the equation above as $\beta \in \R^r$ lying in the convex hull of the vectors $B_{j*} \in \R^r$ for $j \in [n]$.

    To construct the (real) orthonormal vectors $v_s$ and the vector $\beta \in \R^r$, we are going to apply Tverberg's theorem. From the assumption in the statement we have $n \geq 1+(k-1)(r+1)$, hence we can partition the set $[n]$ into $k$ parts
    \begin{equation*}
      [n] = J_1 \sqcup J_2 \sqcup \cdots \sqcup J_k
    \end{equation*}
    such that
    \begin{equation*}
      \bigcap_{s=1}^k \operatorname{conv} \{ B_{j*} \mid j \in J_s \} \neq \emptyset.
    \end{equation*}
    Let $\beta \in \R^r$ be an arbitrary element of the intersection above. For all $s \in [k]$, we can write
    \begin{equation*}
      \beta_i = \sum_{j \in J_s} p_j^{(s)}B_{ji} \qquad \forall i \in [r]
    \end{equation*}
    for some probability vector $p^{(s)}$. Now we define the isometry $V$ as follows: for all $s \in [k]$ and $j \in [n]$,
    \begin{equation*}
      V_{js} \coloneqq \begin{cases}
        \sqrt{p^{(s)}_j} &\qquad \text{ if } j \in J_s\\
        0 &\qquad \text{ if } j \notin J_s.
    \end{cases}
    \end{equation*}
    First, note that the $k$ column vectors $V_{*s} \in \R^n$ have unit norm by construction. Moreover, for $s \neq t$, it holds that $V_{*s} \perp V_{*t}$ because they have disjoint supports. For any $s \in [k]$ we have
    \begin{equation*}
      [V\adjoint \diag(B_{*i}) V]_{ss} = \sum_{j=1}^n B_{ji}|V_{js}|^2 = \sum_{j \in J_s} B_{ji} p^{(s)}_j = \beta_i \qquad \forall i \in [r].
    \end{equation*}
    Using one more time the disjoint support property, we have for $s \neq t \in [k]$:
    \begin{equation*}
      [V\adjoint \diag(B_{*i}) V]_{st} = \sum_{j=1}^n B_{ji} V_{js} V_{jt} = 0.
    \end{equation*}
    We have constructed thus an isometry as in \eqref{eq:isometry-indep-set}, finishing the proof.
  \end{proof}
\end{proposition}

We remark that the proof is conceptually very similar to that of Lemma 2.9 in \cite{weaver2019quantum}. In fact, we could have applied their result directly to obtain the bound: As required by their lemma, the operator systems corresponding to quantum graphs of the form $X_{\ccdot, B}$ consist only of diagonal matrices. The only difference is that we need to account for the fact that our graphs do not have loops, which results in the $+ 1$ in the denominator. We now proceed to prove a simple upper bound for the independence number that also depends on the rank of the matrix $B$.

\begin{proposition}\label{prop:indep-number-UB}
  It holds that $\alpha(X_{\cdot, B}) \leq n - \rk B$.
    \begin{proof}
        Let $V\colon\C^k \to \C^n$ be an isometry witnessing the independence number of the quantum graph $X_{\ccdot, B}$ with $k = \alpha(X_{\ccdot, B})$. Let $b_1, \ldots, b_r \in \R^n$ be vectors that span the image of the projection $B$. The isometry $V$ satisfies
        \begin{equation}\label{eq:indep-number-isometry}
            \forall i \in [r] \ \exists \beta_i \in \R \qquad V^* \diag(b_i) V = \beta_i I_k.
        \end{equation}
        Writing $c_i := b_i - \beta_i \mathbf{1}$, the relation above can be re-written as $V^* \diag(c_i) V = 0$ for $i \in [r]$. Setting $\mathcal V := \img V$, we can rephrase the condition as
        \begin{equation*}
          \diag(c_i) \mathcal V \subseteq \mathcal V^\perp.
        \end{equation*}
        Pick a vector $v \in \mathcal V$ having non-zero coordinates; if such a vector $v$ does not exist, then $V$ is effectively an isometry $V\colon \C^k \to \C^m$ with $m < n$ and the bound in the statement can be improved to $\alpha(X_{\cdot, B}) \leq m-r$. We have
        \begin{equation*}
          d_i:=\diag(c_i) v = \diag(v) c_i.
        \end{equation*}
        Since $\mathbf{1} \notin \img B$, the vectors $c_1, \ldots c_r$ are linearly independent, hence so are the vectors $d_1, \ldots, d_r \in \mathcal C^\perp$, proving that $r \leq n-k$.
    \end{proof}
\end{proposition}
 
Note that this upper bound nicely complements the lower bound in \cref{cor:eqrowsindepbound}: A natural inequality is that $\rk B \leq n - \operatorname{EqRows}(B) + 1$. For loopless $X_{\ccdot, B}$, we have $\mathbf{1} \in \ker B$, which implies that the column-sums of $B$ must be $0$. The inequality thus becomes $\rk B \leq n - \operatorname{EqRows}(B)$, or equivalently $\operatorname{EqRows}(B) \leq n - \rk B$.  Crucially, there exist projectors $B$ that saturate this inequality. Take, for instance, the rank-$1$ projection onto a vector $v$ with $v_1 = \dots = v_{n - 1} = c$ and $v_n = -(n-1)c$. It follows that there exist quantum graphs $X_{\ccdot, B}$ for which our upper and lower bounds coincide.

Finally, note that a naive dimension count for the existence of a complex isometry $V\colon \C^k \to \C^n$ witnessing the independence number of $X_{\cdot, B}$ indicates that the real dimension of the complex Stiefel manifold of isometries $\C^k \to \C^n$ needs to be larger than the total number of constraints in \cref{eq:indep-number-isometry}:
\begin{equation*}
  2nk - k^2 \geq r(k^2-1) \Implies k \leq  \frac{n}{r+1} + \sqrt{\frac{n^2+r^2+r}{(r+1)^2}}.
\end{equation*}
We plot below the difference between the lower bound in \cref{prop:indep-number-LB} and, respectively, the upper bound in \cref{prop:indep-number-UB} (left panel) and the parameter cound in the equation above (right panel).
%We conjecture that for arbitrary (or at least generic) projections $B$, the value of the indepennce number of $X_{\cdot, B}$ is given by the lower bound from \cref{eq:indep-number-LB}.
\begin{center}
\includegraphics[width=0.45\textwidth]{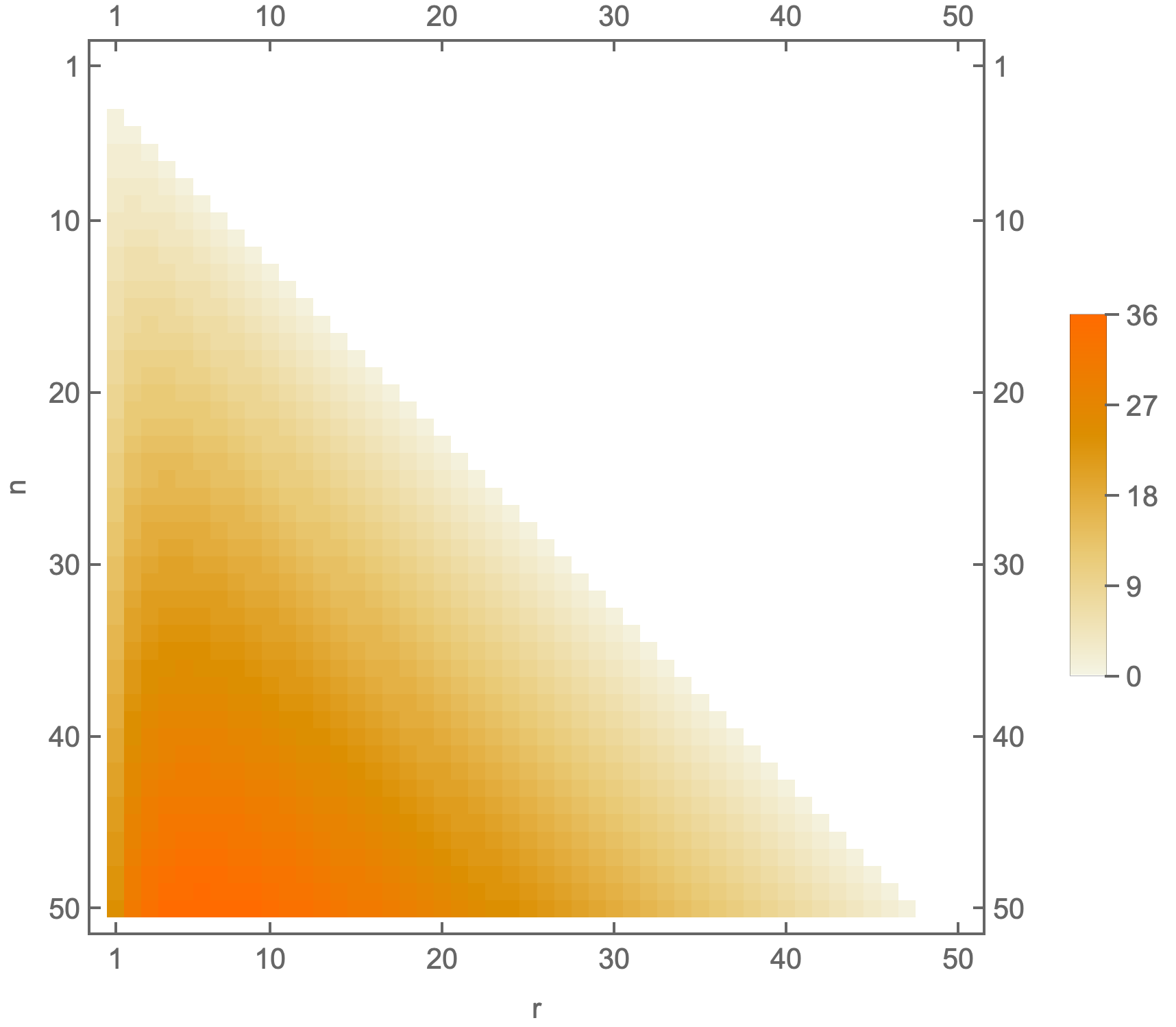} \qquad
\includegraphics[width=0.45\textwidth]{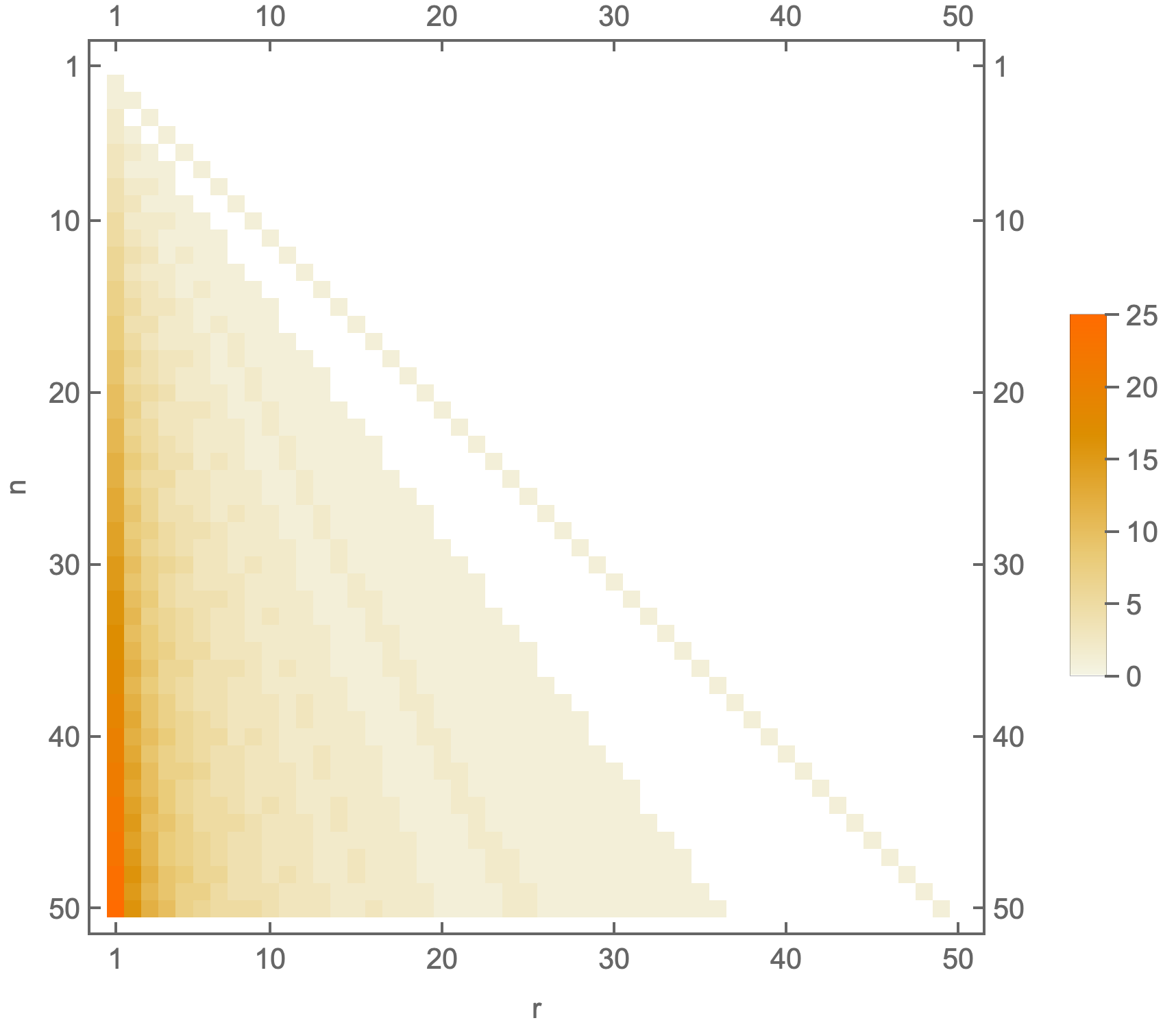}
\end{center}

\subsubsection{The Symmetric and Antisymmetric Cases}

Given our knowledge of $\alpha(X_{A, \ccdot, C})$ and $\alpha(X_{\ccdot, B})$, the treatment of the Symmetric and Antisymmetric quantum graphs is straightforward. The independence number of $\Gsym$ can be determined using \cref{prop:indepnumdetbystrange} and the splitting principle, but it also follows from the more elemental proof below.

\begin{proposition}
  It holds that $\alpha(\Gsym) = 1$.
  \begin{proof}
    Every non-empty quantum graph has an independent set of size $1$, so it suffices to show that there exists no independent set of size $\geq 2$. Suppose that $P$ is a projector witnessing an independent set. Then $P \neq 0$, so there exists an $i \in [n]$ such that $P\ket{i} \neq 0$. Now the operator space corresponding to $\Gsym$ plus $\C\identity$ is spanned by $\{\ket{i}\bra{j} + \ket{j}\bra{i} \mid i, j \in [n]\}$, so $P$ in particular satisfies
    \begin{equation*}
      P\ket{i}\bra{i}P = c_i P.
    \end{equation*}
    Since $P\ket{i} \neq 0$, the left-hand side has rank $1$, which implies that $c_i \neq 0$ and $\rk P = 1$.
  \end{proof}
\end{proposition}

The case for the Antisymmetric quantum graph is again an immediate corollary of \cref{prop:indepnumdetbystrange}.

\begin{proposition}
  It holds that $\alpha(\Gasym) = 1$.
  \begin{proof}
    $\Gasym$ is of the form $X_{A, \ccdot, C}$ so \cref{prop:indepnumdetbystrange} applies. By \cref{prop:gsymgasymstrange} the strange graph corresponding to $\Gasym$ is a complete graph, which has independence number $1$.
  \end{proof}
\end{proposition}

\subsubsection{The Complete and Empty Graphs}

We conclude with the independence numbers of the complete and the empty quantum graphs.

\begin{proposition}
  We have $\alpha(\compl{K_n}) = n$ and $\alpha(K_n) = 1$.
  \begin{proof}
    The operator space corresponding to $\compl{K_n}$ is the trivial space, so any projector $P$ satisfies $PSP \subseteq P$, in particular this holds for $P = \identity$. The complete graph satisfies $K_n = X_{\tfrac{nJ - I}{n}, \tfrac{nI - J}{n}}$, so its corresponding strange graph is the complete graph with only classical edges. \cref{prop:xabcindepsetjointbounds} thus implies that $\alpha(K_n) \leq 1$. The trivial lower bound $\alpha(K_n) \geq 1$ then establishes the equality.
  \end{proof}
\end{proposition}

%% file: content/graphtheory/cliques.tex
Finally, we want to turn our attention to cliques. We will find it more convenient to phrase the definition of cliques in terms of isometries instead of projectors. As such, a $k$-clique of a loopless quantum graph with operator space $S$ is given by an isometry $V\colon \C^k \to \C^n$ such that $V\adjoint (S \oplus \C\identity)V = M_k$. This rephrasing suggests the following strategy to prove the (non-)existence of $k$-cliques. Let $P_S \colon M_n \to M_n$ be the projector onto $S$. Then an isometry $V\colon \C^k \to \C^n$ gives a $k$-clique if and only if the map
\begin{equation*}
  L_V \colon M_n \oplus \C \to M_k, (x, \alpha) \mapsto V\adjoint(P_S(x) + \alpha \identity)V
\end{equation*}
has full image. Since $(\img L_V)^\bot = \ker L_V\adjoint$, this is happens if and only if $L_V\adjoint$  has trivial kernel. Now we have
\begin{equation*}
  \lrangle{(x, \alpha), L_V\adjoint(y)} = \lrangle{L_V(x, \alpha), y} = \lrangle{V\adjoint P_S(x)V, y} + \overline{\alpha}\lrangle{\identity, y} = \lrangle{x, P_S(V y V\adjoint)} + \overline{\alpha} \Tr y
\end{equation*}
for all $x \in M_n$, $\alpha \in \C$. This implies that an arbitrary $y \in M_k$ is in the kernel of $L_V\adjoint$ if and only if $P_S(V y V\adjoint) = 0$ and $\Tr y = 0$. To find a $k$-clique it is thus necessary and sufficient to find an isometry $V \colon \C^k \to \C^n$ such that there are no traceless $y \in M_k$ in the kernel of $x \mapsto P_S(V x V\adjoint)$.

\begin{proposition}\label{prop:cliquecond}
  Given a quantum graph with operator space $S$, the clique number $\omega$ is given by
  $$\omega = \max \{ k \mid \exists V \colon \C^k \to \C^n \text{ isometry such that } \big[\Tr y = 0 \text{ and } P_S(V y V\adjoint) = 0\big] \implies y = 0\}.$$
\end{proposition}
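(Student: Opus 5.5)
The plan is to derive the formula directly from \cref{def:cliques}, using the isometry reformulation and the adjoint computation in the paragraph preceding the statement. First, translate projectors into isometries. Any rank-$k$ projector $P$ factors as $P = VV\adjoint$ with $V \colon \C^k \to \C^n$ an isometry whose image is $\img P$. Conversely, every isometry yields such a projector. Conjugation $Z \mapsto V\adjoint Z V$ is a bijection $PM_nP \to M_k$, with inverse $Y \mapsto VYV\adjoint$. Under this bijection, $P(S \oplus \C\identity)P$ corresponds to $V\adjoint(S \oplus \C\identity)V$. Hence $P$ is a $k$-clique if and only if $V\adjoint(S \oplus \C\identity)V = M_k$.

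Second, express this surjectivity through the linear map $L_V(x,\alpha) = V\adjoint(P_S(x) + \alpha\identity)V$. Since $P_S$ maps $M_n$ onto $S$, the image of $L_V$ is exactly $V\adjoint(S \oplus \C\identity)V$. Note that $S \perp \C\identity$ by looplessness, although this is not even needed. In finite dimensions, $L_V$ is surjective if and only if $\ker L_V\adjoint = (\img L_V)^\perp = 0$.

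Third, compute $L_V\adjoint$ with respect to the Hilbert--Schmidt inner product on $M_n \oplus \C$ and on $M_k$. Using $V\adjoint\identity V = \identity_k$ and the self-adjointness of the orthogonal projector $P_S$, we get
\[
L_V\adjoint(y) = \bigl(P_S(VyV\adjoint),\ \overline{\Tr y}\bigr),
\]
up to the convention for conjugating the scalar. Therefore $y \in \ker L_V\adjoint$ if and only if $P_S(VyV\adjoint) = 0$ and $\Tr y = 0$. Combining the three steps, a $k$-clique exists precisely when some isometry $V \colon \C^k \to \C^n$ satisfies the implication $[\Tr y = 0 \text{ and } P_S(VyV\adjoint) = 0] \implies y = 0$. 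Taking the maximum over $k$ gives the stated formula for $\omega$.

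No step is a genuine obstacle. The only points needing care are the correct identification of the adjoint, in particular the $\C$-component and the fact that $P_S$ is an orthogonal projector (which holds by \cref{prop:adjoprealignedisproj}, since $P_S = G^R$), and checking that the projector–isometry correspondence preserves rank. The maximum is well defined because $k = 1$ always works: for $k = 1$, $M_1 = \C$ is already spanned by $V\adjoint\identity V = 1$.
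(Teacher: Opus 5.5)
Your proposal is correct and follows the paper's argument: you reformulate the clique condition through an isometry, introduce $L_V(x,\alpha) = V\adjoint(P_S(x)+\alpha\identity)V$, and identify $\ker L_V\adjoint$ with the traceless $y$ satisfying $P_S(VyV\adjoint)=0$, while also spelling out the projector--isometry correspondence that the paper leaves implicit. One harmless slip: the scalar component of $L_V\adjoint(y)$ is $\Tr y$, not $\overline{\Tr y}$, since an adjoint is linear whatever the inner-product convention; this does not change the kernel.
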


\subsubsection{ABC Graphs}
We will not be able to prove bounds for $X_{A, B, C}$ in terms of $A, B, C$, but we are able to extend the splitting principle proved in the case of independent sets (see \cref{prop:xabcindepsetjointbounds}) to cliques.

\begin{proposition}\label{prop:clique-splitting}
  Let $X_{A, B, C}$ be a quantum graph. Then we have $\omega(X_{A, B, C}) \geq \max \{\omega(X_{A, \ccdot, C}), \omega(X_{\ccdot, B})\}$.
  \begin{proof}
    Denote the operator spaces of $X_{A, B, C}$, $X_{A, \ccdot, C}$, and $X_{\ccdot, B}$ by $S_{ABC}$, $S_{AC}$, and $S_B$ respectively. Recall from the proof of \cref{prop:xabc-qgraph-char} that the projector onto $S_{ABC}$ decomposes as
    \begin{equation*}
      (X_{A, B, C})^R = X_{B, A, C} = B \oplus \bigoplus_{i < j} \begin{pmatrix}
        A_{ij} & C_{ij} \\
        C_{ji} & A_{ji}
      \end{pmatrix}.
    \end{equation*}
    The $B$-summand is the projector onto $S_B$, while the remaining summands form the projector onto $S_{AC}$. Since the image of a direct sum is the direct sum of the images, we have $S_{ABC} = S_{AC} \oplus S_B$, and in particular $S_{ABC} \supseteq S_{AC}$ and $S_{ABC} \supseteq S_B$. Now suppose $V\colon \C^k \to \C^n$ witnesses a $k$-clique of $X_{A, \ccdot, C}$, i.e.\ $V\adjoint(S_{AC} \oplus \C\identity)V = M_k$. Since $S_{ABC} \supseteq S_{AC}$, we have
    \begin{equation*}
      M_k = V\adjoint(S_{AC} \oplus \C\identity)V \subseteq V\adjoint(S_{ABC} \oplus \C\identity)V \subseteq M_k,
    \end{equation*}
    so $V$ also witnesses a $k$-clique of $X_{A, B, C}$. The same argument with $S_B$ in place of $S_{AC}$ yields the result. \qedhere
  \end{proof}
\end{proposition}

\subsubsection{AB Graphs}

Moving on to AB graphs, we start with the case of $X_{\ccdot, B}$. It turns out that we can upper bound $\omega(X_{\ccdot, B})$ in terms of $\rk B$.

\begin{proposition}\label{prop:clique-bound-XB}
  It holds that $\omega(X_{\ccdot, B}) \leq \sqrt{\rk B + 1}$.
  \begin{proof}
    Recall that the projector $P_S$ onto the operator space associated with $X_{\ccdot, B}$ is given by
    \begin{equation*}
       \input{diagrams/xb-proj.tikz}.
    \end{equation*}
    We want to find out under which conditions on an isometry $V\colon \C^k \to \C^n$ there exists a traceless $y \in M_k$ in the kernel of $P_S(V y V\adjoint)$. We have
    \begin{equation*}
      \input{diagrams/proj-xb-kernel-cond-1.tikz} = 0 \quad\quad\IFF\quad \input{diagrams/proj-xb-kernel-cond-2.tikz} = 0 \quad\quad\IFF\quad\quad \input{diagrams/proj-xb-kernel-cond-3.tikz}~ = 0,
    \end{equation*}
    where $d_y \coloneqq \diag V y V\adjoint$. Let $T \coloneqq \{ \diag V y V\adjoint \mid y \in M_k, \Tr y = 0\}$. The question then becomes under which conditions we can find an isometry $V$ such that
    (i) $\diag VyV\adjoint \neq 0$
    for all non-zero traceless $y \in M_k$ and (ii) $T \cap \ker B = \{0\}$.

    Note that condition (ii) is satisfied if and only if for every basis $t_1, \dots, t_\ell$ of $T$ and $b_1, \dots, b_{\ell'}$ of $\ker B$ the set $\{t_1, \dots, t_\ell, b_1, \dots, b_{\ell'}\}$ is linearly independent. It is thus necessary that $\dim T \leq n - \dim \ker B = \rk B$. Condition (i) on the other hand is equivalent to the map $\Phi_V\colon \mathfrak{sl}_k(\C) \to \C^n, x \mapsto \diag V x V\adjoint$ being injective. It follows that $\dim T = \rk \Phi_V = \dim \mathfrak{sl}_k(\C) = k^2 - 1$, and thus $k \leq \sqrt{\rk B + 1}$.
  \end{proof}
\end{proposition}

It turns out that this upper bound can generally be achieved, and this question is closely related to the existence of specific \emph{informationally complete POVMs} (IC-POVMs, see e.g.~\cite[Section~3.3]{heinosaari2011mathematical}). The easiest case is $B = I - J/n$.

\begin{proposition}
  Let $B = I - J/n$. Then $\omega(X_{\ccdot, B}) = \lfloor \sqrt{n} \rfloor$.
\end{proposition}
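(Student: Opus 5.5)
The bound $\omega(X_{\ccdot, B}) \leq \lfloor \sqrt{n} \rfloor$ should follow from \cref{prop:clique-bound-XB}, and the matching lower bound from an explicit isometry built out of a rank-one informationally complete POVM. For the upper bound: $B = I - J/n$ is the projector onto $\mathbf{1}^\perp$, so $\rk B = n-1$. \Cref{prop:clique-bound-XB} then gives $\omega \leq \sqrt{n}$, and since $\omega$ is an integer, $\omega \leq \lfloor\sqrt n\rfloor$.

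For the lower bound, set $k = \lfloor \sqrt n \rfloor$, so that $k^2 \leq n$. By the reformulation in the proof of \cref{prop:clique-bound-XB} (which rests on \cref{prop:cliquecond}), an isometry $V\colon \C^k \to \C^n$ is a $k$-clique if and only if two conditions hold:
\begin{itemize}
  \item[(i)] $\Phi_V\colon y \mapsto \diag(VyV\adjoint)$ is injective on traceless $y$;
  \item[(ii)] $T \cap \ker B = \{0\}$, where $T = \Phi_V(\mathfrak{sl}_k(\C))$.
\end{itemize}

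Condition (ii) is automatic here. We have $\ker B = \C\mathbf{1}$, and for traceless $y$ the sum of the entries of $\diag(VyV\adjoint)$ equals $\Tr(VyV\adjoint) = \Tr y = 0$. If $c\mathbf{1} \in T$, its entry sum $cn$ must therefore vanish, which forces $c = 0$. So everything reduces to condition (i).

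To handle (i), I would write the $j$-th entry of $\Phi_V(y)$ as $\Tr(y E_j)$, where $E_j = V\adjoint\ket{j}\bra{j}V$. These are rank-$\leq 1$ positive operators with $\sum_j E_j = V\adjoint V = \identity_k$, i.e.\ a POVM. Since $\identity_k = \sum_j E_j$ lies in $\lspan\{E_j\}$, the orthogonal complement (in the Hilbert--Schmidt inner product) $\lspan\{E_j\}^\perp$ is already contained in $\mathfrak{sl}_k$. Hence injectivity of $\Phi_V$ on $\mathfrak{sl}_k$ is equivalent to $\lspan\{E_j\} = M_k$, i.e.\ to $\{E_j\}$ being informationally complete.

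Conversely, any rank-one POVM $E_j = \ket{w_j}\bra{w_j}$ with $n$ elements gives an isometry: the matrix $V$ with rows $\bra{w_j}$ satisfies $V\adjoint V = \sum_j \ket{w_j}\bra{w_j} = \identity_k$. It therefore suffices to build a rank-one IC-POVM on $\C^k$ with at most $n$ elements, and pad with zero rows (zero effects) up to $n$.

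The construction, which is the only real step, goes as follows.
\begin{itemize}
  \item Take $k^2$ unit vectors whose projectors span $M_k$: the $\ket{i}$, together with $(\ket i + \ket j)/\sqrt2$ and $(\ket i + \mathrm{i}\ket j)/\sqrt2$ for $i<j$.
  \item Their sum $S$ is positive definite, since the $\ket{i}\bra{i}$ alone already sum to $\identity$.
  \item Replace each vector $u$ by $S^{-1/2}u$. The resulting rank-one operators sum to $\identity_k$, and they still span $M_k$ because $x \mapsto S^{-1/2} x S^{-1/2}$ is invertible.
\end{itemize}
This uses $k^2 \leq n$ elements, so the padded $V$ is an isometry satisfying (i) and (ii), and hence $\omega \geq k$.
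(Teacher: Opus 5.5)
Your proposal is correct and follows essentially the same route as the paper. It uses the upper bound from \cref{prop:clique-bound-XB} with $\rk B = n-1$, makes condition (ii) automatic by the entry-sum argument, and achieves condition (i) with the same $F^{-1/2}$-normalised $k^2$-element rank-one IC-POVM. The differences are only cosmetic: you pad with zero rows where the paper splits $u_1$ into $n-k^2+1$ rescaled copies, and you derive injectivity on $\mathfrak{sl}_k$ as an equivalence with informational completeness (via $\identity_k \in \lspan\{E_j\}$) rather than through the paper's centred operators. Both variants are valid.
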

\begin{proof}
  We have $\ker B = \lspan\{\1\}$, where $\1 = (1, \dots, 1)\transpose \in \C^n$. The upper bound on the clique number follows from \cref{prop:clique-bound-XB}. For the lower bound, we shall use the same proof strategy as in \cref{prop:clique-bound-XB}: It suffices to show that for $k=\lfloor \sqrt{n} \rfloor$ there exists an isometry $V\colon \C^k \to \C^n$ satisfying the following conditions:
  \begin{enumerate}[label=(\roman*)]
    \item $\diag VyV\adjoint \neq 0$ for all non-zero traceless $y \in M_k$
    \item $\{ \diag V y V\adjoint \mid y \in M_k, \Tr y = 0\} \cap \ker B = \{0\}$.
  \end{enumerate}

  We first observe that condition~(ii) is automatically satisfied whenever condition~(i) holds. Indeed, let $y \in M_k$ be traceless and let $v_1, \dots, v_n \in \C^k$ denote the rows of $V$, so that $(\diag VyV\adjoint)_i = v_i\adjoint y v_i$. Then
  \begin{equation*}
    \sum_{i=1}^n (\diag VyV\adjoint)_i = \sum_{i=1}^n v_i\adjoint y v_i = \Tr\left(y \sum_{i=1}^n v_iv_i\adjoint \right) = \Tr(y) = 0,
  \end{equation*}
  where we used that $\sum_i v_iv_i\adjoint = I_k$ since $V$ is an isometry. In other words, the image of $\Phi_V$ on traceless matrices is contained in the hyperplane $\1^\bot = (\ker B)^\bot$. Thus for any non-zero traceless $y$ satisfying $\diag VyV\adjoint \neq 0$, the vector $\diag VyV\adjoint$ cannot lie in $\ker B = \lspan\{\1\}$, and condition~(ii) is automatic.
  The problem therefore reduces to finding the largest~$k$ for which condition~(i) can be satisfied: The map $\Phi_V \colon \mathfrak{sl}_k(\C) \to \C^n$, $x \mapsto \diag VxV\adjoint$, must be injective.

  It remains to show that for $k = \lfloor \sqrt{n} \rfloor$, an isometry satisfying condition~(i) exists. Set $m = k^2 \leq n$. We will construct $n$ vectors $v_1, \dots, v_n \in \C^k$ with $\sum_i v_iv_i\adjoint = I_k$ whose rank-one operators $v_iv_i\adjoint$ span~$M_k$. Note that this is the defining property of an IC-POVM. Consider the standard basis $\ket{1}, \dots, \ket{k}$ of $\C^k$, and define the following $m = k^2$ vectors:
  \begin{enumerate}[(a)]
    \item The $k$ basis vectors $\ket{1}, \dots, \ket{k}$.
    \item For each pair $1 \leq s < t \leq k$, the vector $(\ket{s} + \ket{t})/\sqrt{2}$.
    \item For each pair $1 \leq s < t \leq k$, the vector $(\ket{s} + i \ket{t})/\sqrt{2}$.
  \end{enumerate}
  This gives $k + 2\binom{k}{2} = k^2$ vectors. Their rank-one outer products span~$M_k$: The vectors in~(a) yield the diagonal matrix units $\ket{s}\bra{s}$; the vectors in~(b) yield operators whose off-diagonal part is proportional to $\ket{s}\bra{t} + \ket{t}\bra{s}$ (recovering the real parts of off-diagonal matrix units); and the vectors in~(c) yield operators whose off-diagonal part is proportional to $i(\ket{s}\bra{t} - \ket{t}\bra{s})$ (recovering the imaginary parts). Together these span all of $M_k$.
  Denote these $m$ vectors by $w_1, \dots, w_m$ and let
  \begin{equation*}
   F \coloneqq \sum_{l=1}^m w_lw_l\adjoint.
  \end{equation*}
   Since the operators $w_lw_l\adjoint$ span $M_k$, the matrix $F$ is positive definite. Set $u_l \coloneqq F^{-1/2} w_l$ for each $l$. Then
  \begin{equation*}
   \sum_{l=1}^m u_lu_l\adjoint = F^{-1/2}\left(\sum_l w_lw_l\adjoint\right)F^{-1/2} = I_k.
  \end{equation*}
  Moreover, the operators $u_lu_l\adjoint = F^{-1/2}w_lw_l\adjoint F^{-1/2}$ still span $M_k$, since conjugation by the invertible matrix $F^{-1/2}$ preserves linear independence.

  If $m = n$, we set $v_l = u_l$ and are done. If $m < n$, we extend the collection to $n$ vectors as follows. Choose any $u_1 \neq 0$ and split it into $n - m + 1$ copies: Set $v_l = u_1 / \sqrt{n - m + 1}$ for $l = 1, \dots, n - m + 1$, and $v_l = u_{l - n + m}$ for $l = n - m + 2, \dots, n$. These $n$ vectors still satisfy $\sum_{i=1}^n v_iv_i\adjoint = I_k$, and the set $\{v_iv_i\adjoint\}$ still spans $M_k$ since it contains all the original $u_lu_l\adjoint$ (up to rescaling $u_1u_1\adjoint$, which remains non-zero).
  We thus obtain an isometry $V\colon \C^k \to \C^n$ with having the vectors $v_1, \dots, v_n \in \C^k$ as rows:
  \begin{equation*}
    V = \sum_{i=1}^n \ket i \bra{v_i}.
  \end{equation*}
  For any traceless $y \in M_k$,
  \begin{equation*}
    \Phi_V(y)_i = v_i\adjoint y v_i = \Tr (v_iv_i\adjoint \cdot y) = \Tr \left((v_iv_i\adjoint - \frac{\|v_i\|^2}{k}I_k) \cdot y\right),
  \end{equation*}
  where the last equality uses $\Tr(y) = 0$. The centred operators $v_iv_i\adjoint - \|v_i\|^2 I_k/k$ are traceless, and they span all of $\mathfrak{sl}_k(\C)$ since the $v_iv_i\adjoint$ span $M_k$. If $\Phi_V(y) = 0$, then $y$ is orthogonal (in the Hilbert--Schmidt inner product) to every $v_iv_i\adjoint - \|v_i\|^2 I_k/k$, hence to all of $\mathfrak{sl}_k(\C)$. Since $y$ is itself traceless, this forces $y = 0$. Thus $\Phi_V$ is injective, establishing condition~(i).
\end{proof}

The case for $X_{A, \ccdot}$ is curious. Given our previous results, we would expect that $\omega(X_{A, \ccdot}) = \omega(A)$. However, this is not the case. Weaver had already noticed this discrepancy in \cite{weaver2017quantum}: Recall that he works with operator spaces that correspond to quantum graphs with loops at every vertex. One such operator space is $D_n$, the space of all diagonal matrices. He proves the following.

\begin{proposition}[Proposition 2.2 in \cite{weaver2017quantum}]
  If $n \geq k^2 + k - 1$ then $D_n$ has a $k$-clique.
\end{proposition}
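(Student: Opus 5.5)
The plan is to reduce the existence of a $k$-clique of $D_n$ to the existence of a tight frame of rank-one operators spanning $M_k$, and then reuse the construction from the proof for $B = I - J/n$ above. Since $D_n$ already contains $\identity$, the clique condition of \cref{def:cliques} reads $P D_n P = P M_n P$. Phrasing it with an isometry $V\colon \C^k \to \C^n$, $P = VV\adjoint$, as at the start of this subsection, we need $V\adjoint D_n V = M_k$.

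First I would compute $V\adjoint D_n V$. The space $D_n$ is spanned by the matrix units $\ket{i}\bra{i}$. Writing $V = \sum_{i=1}^n \ket{i}\bra{v_i}$ with rows $v_i \in \C^k$, we get $V\adjoint \ket{i}\bra{i} V = \ket{v_i}\bra{v_i}$. Hence
\begin{equation*}
  V\adjoint D_n V = \lspan\{ v_iv_i\adjoint \mid i \in [n]\}.
\end{equation*}
The isometry condition $V\adjoint V = \identity_k$ is equivalent to $\sum_i v_iv_i\adjoint = \identity_k$. So $D_n$ has a $k$-clique if and only if there exist $n$ vectors $v_1, \dots, v_n \in \C^k$ with $\sum_i v_iv_i\adjoint = \identity_k$ whose outer products span $M_k$. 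This is exactly an informationally complete POVM with $n$ rank-one elements.

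Next I would invoke the construction already given in the proof that $\omega(X_{\ccdot, B}) = \lfloor \sqrt n \rfloor$ for $B = I - J/n$. Take the $k^2$ vectors $\ket{s}$, $(\ket{s} + \ket{t})/\sqrt2$ and $(\ket{s} + i\ket{t})/\sqrt2$. Their outer products span $M_k$. Renormalise them by $F^{-1/2}$, where $F$ is the (positive definite) sum of these outer products, so that they sum to $\identity_k$. If $n > k^2$, split one vector into $n - k^2 + 1$ equal rescaled copies to reach exactly $n$ vectors. The spanning property survives both the invertible congruence and the splitting, so this produces the required $V$ whenever $n \geq k^2$. Since $k^2 + k - 1 \geq k^2$ for $k \geq 1$, the hypothesis $n \geq k^2 + k - 1$ suffices, and in fact gives slightly more than stated.

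The only point needing care is the reduction in the first step. In particular, one must check that the loopless convention ($S \oplus \C\identity$) plays no role here, since $\identity \in D_n$. There is no real obstacle beyond this, as the spanning-frame construction is already established earlier in the paper.
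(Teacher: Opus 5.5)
Your proof is correct. The paper itself only cites Weaver here, but your argument is the paper's own proof that $\omega(X_{\ccdot, I-J/n}) = \lfloor\sqrt{n}\rfloor$: the same $F^{-1/2}$-normalised informationally complete family built from $\ket{s}$, $(\ket{s}+\ket{t})/\sqrt{2}$, $(\ket{s}+i\ket{t})/\sqrt{2}$, padded by splitting one vector. That result applies verbatim because this graph has $S \oplus \C\identity = D_n$, and your direct reduction to $V\adjoint D_n V = \lspan\{v_iv_i\adjoint\} = M_k$ only skips the paper's kernel reformulation.

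As you note, this gives the stronger threshold $n \geq k^2$. It is sharp, since $\dim V\adjoint D_n V \leq n$.
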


Consider the empty graph with loops at every vertex. Just as for loopless graphs, we can consider the quantum graph $X_{A, \ccdot}$, which in this case is equal to $X_{A, \identity}$ (note that this quantum graph has loops since $\mathbf{1} \not\in \ker B$). The corresponding operator space is given precisely by $\img X^R_{A, \ccdot} = D_n$. Weaver's result can thus be interpreted as the empty graph on $n$ vertices having a $k$-clique if $n$ is large enough. The problem persists when restricting to loopless quantum graphs.

\begin{proposition}\label{prop:xacliquelta}
  Let $A$ be the adjacency matrix of the complete classical graph. Then $\omega(X_{A, \ccdot}) < \omega(A)$.
  \begin{proof}
    The projector onto the operator space corresponding to $X_{A, \ccdot}$ is given by
    \begin{equation*}
      \input{diagrams/xkn-proj.tikz}
    \end{equation*}
    The operator space is thus given by
    \begin{equation*}
      S = \lspan\{\ket{i}\bra{j} \mid i \neq j \in [n]\},
    \end{equation*}
    the space of all matrices with zero diagonal. We have $\omega(A) = n$. If there were an $n$-clique, there would be an isometry $V\colon \C^n \to \C^n$ such that $V(S \oplus \C\identity)V\adjoint = M_n$. This is impossible, since $\dim M_n = n^2 > n^2 - n + 1 = \dim S \oplus \C\identity$.
  \end{proof}
\end{proposition}

There are also loopless graphs for which $\omega(X_{A, \ccdot}) > \omega(A)$. In fact, the former can be arbitrarily larger.

\begin{proposition}\label{prop:cliquenumclassbipart}
  Let $A$ be the adjacency matrix of the classical complete bipartite graph. Then $\omega(A) = 2$ and $\omega(X_{A, \ccdot}) \geq n/2$.
  \begin{proof}
    It is straightforward to see that the complete bipartite graph has clique number $2$, since any larger clique must contain an odd cycle. The operator space $S$ corresponding to $\omega(X_{A, \ccdot})$ is spanned by the non-zero coordinates of the matrix
    \begin{equation*}
      A = \begin{pmatrix}
        0 & J\\
        J & 0
      \end{pmatrix}.
    \end{equation*}
    Choosing the isometry $V\colon \C^{n/2} \to \C^n$ as
    \begin{equation*}
      V = \frac{1}{\sqrt 2}\begin{pmatrix}
        I\\
        I
      \end{pmatrix},
    \end{equation*}
    we get $V\adjoint S V = M_k$ and thus in particular $V\adjoint (S \oplus \C\identity) V = M_k$.
  \end{proof}
\end{proposition}

In general, we can slightly lower the trivial upper bound on $\omega(X_{A, \ccdot})$ from $n$ to $n-1$, but not any further.

\begin{proposition}\label{prop:classcomplomega}
  For all $A$, we have $\omega(X_{A, \ccdot}) \leq n - 1$. If $A$ is the adjacency matrix of the complete classical graph, then $\omega(X_{A, \ccdot}) = n - 1$.
  \begin{proof}
    Since the operator space $S$ associated to $X_{A, \ccdot}$ is a coordinate subspace of $M_n$, and $A$ has zero diagonal, we have $\dim (S \oplus \C\identity) \leq n^2 - n + 1$. It follows that for any isometry $V$, $\dim V\adjoint(S \oplus \C\identity)V \leq n^2 - n + 1 < n^2$, so $X_{A, \ccdot}$ cannot have an $n$-clique.

    We now claim that if $S$ is maximal subject to these constraints, that is
    \begin{equation*}
      S = \lspan\{\ket{i}\bra{j} \mid i, j \in [n], i \neq j\},
    \end{equation*}
    then there exists an isometry $V\colon \C^{n - 1} \to \C^n$ such that for all $y \in M_{n-1}$ with $\Tr y = 0$ we have
    \begin{equation}\label{eq:cliquecond}
      P_S(VyV\adjoint) = 0 \quad\Implies\quad y = 0.
    \end{equation}
    \cref{prop:cliquecond} then implies that if $A$ is the complete classical graph then $X_{A, \ccdot}$ has a clique of size $n - 1$.

    Let $y \in M_{n - 1}$ be arbitrary and let $Q \coloneq VyV\adjoint$. Then $y = V\adjoint Q V$ and thus $Q = VV\adjoint QVV\adjoint = RQR$, where $R$ is the projector onto $\img V$. Now $R$ has rank $n - 1$, so it is of the form
    \begin{equation*}
      R = \identity - uu\adjoint
    \end{equation*}
    for some unit-norm $u \in \C^n$. Choose $u = 1/\sqrt{n} \mathbf{1}$. Suppose then that $P_S(V y V\adjoint) = P_S(Q) = 0$. Then $Q \in S^\bot = D_n$. Then $\diag Q = Q\mathbf{1} = RQR\mathbf{1} = 0$. Since $Q$ is diagonal, this implies that $Q = 0$ and thus $y = V\adjoint Q V = 0$ as required.
  \end{proof}
\end{proposition}

Despite \cref{prop:xacliquelta}, there is some hope for lower bounds on $\omega(X_{A, \ccdot})$ in terms of $\omega(A)$. It turns out that the lower bound just barely fails. Concretely, we can prove the following.

\begin{proposition}
  It holds that $\omega(X_{A, \ccdot}) \geq \omega(A) - 1$.
  \begin{proof}
    Let $k = \omega(A)$. Then the classical graph $A$ has a clique of size $k$. Concretely, there exists a set $C \subseteq [n]$, such that $A_{ij} = 1$ for all $i \neq j \in C$. For the operator space $S$ of $X_{A, \ccdot}$, this implies that
    \begin{equation*}
      \lspan\{\ket{i}\bra{j} \mid i, j \in C, i \neq j\} \subseteq \lspan S.
    \end{equation*}
    Enumerate the elements of $C$ as $c_1, \dots, c_k$, and define an isometry $W$ by
    \begin{equation*}
      W = \sum_{i = 1}^k \ket{c_i}\bra{i}.
    \end{equation*}
    Then $W\adjoint S W = \lspan\{\ket{i}\bra{j} \mid i \neq j \in [k]\} \subseteq M_k$. But this is the operator space of $X_{K_k, \ccdot}$! By \cref{prop:classcomplomega}, we know that there exists an isometry $V\colon \C^{k - 1} \to \C^k$ such that $V\adjoint(W\adjoint S W \oplus \C\identity_k)V = M_{k - 1}$. We also have $W\adjoint \identity_n W = \identity_k$, so
    \begin{equation*}
      W\adjoint (S \oplus \C\identity_n) W = W\adjoint S W \oplus \C\identity_k.
    \end{equation*}
    It follows that
    \begin{equation*}
      V\adjoint W\adjoint (S \oplus \C\identity_n) W V = V\adjoint(W\adjoint S W \oplus \C\identity_k) V = M_{k - 1},
    \end{equation*}
    so $WV \colon \C^{k-1} \to \C^n$ witnesses a clique of size $k - 1$.
  \end{proof}
\end{proposition}

One may wonder why $\omega(A)$ fails to be a lower bound by such a close margin. We can shed some light on this by considering the role of loops, and the different operator spaces that may be associated to a classical graph. Recall that Weaver's original definition of a $k$-clique requires the existence of a rank-$k$ projector such that $PSP = PM_nP$, where $S$ is the operator space associated to the quantum graph. As Weaver mentions in \cite{weaver2021quantum}, the operator space $S$ should be seen as the edge relation of the quantum graph (in analogy to the edge relation $E(G) \subseteq V(G) \times V(G)$ of a classical graph $G$), while the ``sandwiching'' operation $PSP$ should be seen as a restriction of a relation to some subset of the underlying set. In terms of classical graphs, the condition $PSP = PM_nP$ may thus be interpreted as the existence of an induced subgraph that is isomorphic to an induced subgraph of the complete graph with loops at all vertices (the largest possible relation).

This seems to suggest that intuitively, we need our quantum graph to have loops in order to find cliques.\footnote{This is not actually true, as the proof of \cref{prop:cliquenumclassbipart} shows. This is because the mental model of viewing $PSP$ as the restriction of a relation to some set is already not rigorous. Nevertheless, it will suffice for the point we are making.} However, the quantum graphs that we are considering do not have loops.
To address this issue, we have modified the definition of cliques to require the existence of a projection $P$ with $P(S \oplus \C\identity)P = PM_nP$. Adding the identity to the operator space of a loopless quantum graph yields a quantum graph with loops at every vertex, which then allows us to use Weaver's definition. This way of adding loops is very natural, because the resulting operator space is the smallest superspace of $S$ that has loops at every vertex.

The problem arises when considering quantum graphs of the form $X_{A, \ccdot}$. We have seen in \cref{xa-qgraph-char} that $X_{A, \ccdot}$ is loopless if and only if $A$ is loopless, and adding the identity to the corresponding operator space turns it into a quantum graph with loops at every vertex as desired. However, there is a second natural way to add these loops. Namely, we can add the loops to the classical graph $A$, letting $A^\circ = A + \identity$, and then consider $X_{A^\circ, \ccdot}$. This does not result in the same operator space. The former approach yields
\begin{equation*}
  S' = S \oplus \C\identity = \lspan (\{\ket{i}\bra{j} \mid A_{ij} \neq 0 \} \cup \{ \identity \})
\end{equation*}
while the latter approach yields
\begin{equation*}
  S^\circ = \lspan \{\ket{i}\bra{j} \mid A_{ij} \neq 0 \text{ or } i = j\}.
\end{equation*}
Concretely, $S^\circ$ contains the diagonal elements $\ket{i}\bra{i}$ for $i \in [n]$, while $S'$ does not. Otherwise $S'$ and $S^\circ$ are the same. We thus expect $S^\circ$ to have larger cliques. This suggests that $S^\circ$ instead of $S'$ might be the more natural choice to upper bound $\omega(A)$ .

Conveniently, our framework allows us to recover $S^\circ$ exactly. Let us consider some arbitrary loopless classical graph $A$ and consider $X_{A + \diag(1-1/n), B}$ for $B = I - J/n$.\footnote{This choice of $B$ has non-zero diagonal, so we need to add the diagonal to $A$ as well. This is merely a notational quirk, since the diagonal is removed again in the resulting linear map.} Then the projector $X^R_{A + \diag(1 - 1/n), B}$ onto the operator space is given by
\begin{equation*}
  \input{diagrams/xabdiag-proj.tikz}\enspace.
\end{equation*}
Adding the identity as in our clique definition gets rid of the last term,
\begin{equation*}
  \input{diagrams/xabdiag-proj-refl.tikz}\enspace.
\end{equation*}
It is not hard to see that this projector precisely has image $S^\circ$: The first term provides $S$, while the second term provides the diagonal elements. And indeed, we find that its clique number is lower bounded by $\omega(A)$.

\begin{proposition}\label{prop:adiag-clique-lower-bound}
  If $A$ is the adjacency matrix of a graph $G$, then $\omega(X_{A + \mathrm{diag}(1-1/n), I-J/n}) \geq \omega(G)$.
  \begin{proof}
    Let $k = \omega(G)$ and let $C = \{c_1, \dots, c_k\} \subseteq [n]$ be a $k$-clique of $G$, so that $A_{c_sc_t} = 1$ for all $s \neq t$. Define the isometry $V\colon \C^k \to \C^n$ by $V\ket{s} = \ket{c_s}$. Recall from the discussion above that $S \oplus \C\identity = S^\circ = \lspan\{\ket{i}\bra{j} \mid A_{ij} \neq 0 \text{ or } i = j\}$. For all $s, t \in [k]$, the matrix unit $\ket{c_s}\bra{c_t}$ lies in $S^\circ$: If $s \neq t$, this follows from $A_{c_sc_t} = 1$, and if $s = t$ from the diagonal condition. Since $V\adjoint \ket{c_s}\bra{c_t} V = \ket{s}\bra{t}$, it follows that $V\adjoint S^\circ V = M_k$, so $V$ witnesses a $k$-clique of $X_{A + \mathrm{diag}(1-1/n), I-J/n}$. \qedhere
  \end{proof}
\end{proposition}

\subsubsection{The Symmetric and Antisymmetric Cases}

For the symmetric and antisymmetric quantum graphs, we can determine $\omega$ exactly.

\begin{proposition}
  The clique number of the Symmetric and the Antisymmetric quantum graphs on $n$ vertices is
  \begin{equation*}
    \omega(\Gsym) = \omega(\Gasym) = \left\lceil \frac n 2 \right\rceil.
  \end{equation*}
\end{proposition}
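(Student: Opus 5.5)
The plan is to first identify the two operator spaces with loops added. Then I will get the upper bound from the kernel criterion of \cref{prop:cliquecond}, and the lower bound from an explicit isometry. Under vectorisation, the antisymmetric subspace of $\C^n\otimes\C^n$ corresponds to the antisymmetric matrices in $M_n$. Hence for $\Gasym$ we have $S\oplus\C\identity=\{X\transpose=-X\}\oplus\C\identity$. For $\Gsym$, $S$ consists of the symmetric matrices orthogonal to $\identity$, so $S\oplus\C\identity=\{X\transpose=X\}$. Throughout, let $V\colon\C^k\to\C^n$ be an isometry and $R=VV\adjoint$. Note that $R\transpose=\overline{R}$ is the projector onto $\overline{\img V}$.

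\textbf{Upper bound.} Suppose $k>n/2$. Then $U:=\img R\cap\overline{\img R}$ has dimension at least $2k-n\geq1$, and $U$ is closed under complex conjugation. Hence $U$ has a basis of real vectors.

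For a vector $a\in M_n$ of the form $a=u w\transpose$ with $u,w\in U$, we have $RaR=(Ru)(\overline{R}w)\transpose=a$. So $y:=V\adjoint aV$ satisfies $VyV\adjoint=a$ and $\Tr y=\Tr a$. By \cref{prop:cliquecond}, the clique condition fails as soon as we find a nonzero such $a$ (or a linear combination of such) that is traceless and lies in the orthogonal complement of $S\oplus\C\identity$. For $\Gsym$ this means $a$ antisymmetric. For $\Gasym$ it means $a$ symmetric and traceless.

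If $\dim U\geq2$, take real orthonormal $u,w\in U$. Then $uw\transpose-wu\transpose$ kills the $\Gsym$ clique and $uw\transpose+wu\transpose$ kills the $\Gasym$ clique; both are nonzero and traceless. The main obstacle is the boundary case $\dim U=1$, which occurs when $n$ is odd and $k=(n+1)/2$. Here $U=\C u$ with $u$ real, and the rank-one choices $uu\transpose$ are symmetric with nonzero trace. I would handle this by enlarging the ansatz. I would try $a=u w\transpose \pm w u\transpose$ with $w\in\overline{\img R}$ arbitrary and $u\in U$; since $u$ is real, $u\in\img R\cap\img\overline{R}$. Then I would compute $RaR$ and try to cancel the defect using the remaining freedom in $\img R$. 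Failing that, I would fall back on a dimension count for the map $y\mapsto P_{S'^\perp}(VyV\adjoint)$ on $\mathfrak{sl}_k$, comparing $k^2-1$ against $\dim(S\oplus\C\identity)$ restricted to $\img R$.

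\textbf{Lower bound.} For $n=2k$, take $V=\tfrac{1}{\sqrt2}\begin{pmatrix}\identity_k\\ \mathrm{i}\,\identity_k\end{pmatrix}$. Write a matrix of $S\oplus\C\identity$ in $k\times k$ blocks.
\begin{itemize}
\item For symmetric $X=\begin{pmatrix}P&Q\\Q\transpose&T\end{pmatrix}$, one gets $V\adjoint XV=\tfrac12\bigl((P-T)+\mathrm{i}(Q-Q\transpose)\bigr)$, which ranges over $\{\text{symmetric}\}+\{\text{antisymmetric}\}=M_k$.
\item For antisymmetric $X=\begin{pmatrix}P&Q\\-Q\transpose&T\end{pmatrix}$, one gets $\tfrac12\bigl((P+T)+\mathrm{i}(Q+Q\transpose)\bigr)$, which again ranges over all of $M_k$.
\end{itemize}
So both graphs have an $n/2$-clique.

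For odd $n=2k-1$, I would modify this construction: pair $k-1$ coordinates as above, and assign the last row of $V$ to a single real coordinate with suitable weights. The aim is that the image still contains all of $\mathfrak{sl}_k$ plus the identity. I would verify injectivity on traceless $y$ via \cref{prop:cliquecond}, as in the proof of \cref{prop:classcomplomega}. Equivalently, I would show that for this $V$ the only traceless $y$ with $VyV\adjoint\perp S\oplus\C\identity$ is $y=0$.
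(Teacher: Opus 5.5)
Your upper bound aims at the wrong threshold. You try to show that no $k$-clique exists once $k>n/2$, and you flag $n$ odd, $k=(n+1)/2$, $\dim U=1$ as the main obstacle. But $(n+1)/2=\lceil n/2\rceil$ is exactly the claimed value of $\omega$ for odd $n$. In that case you are therefore trying to prove something false, and it also contradicts your own odd-$n$ lower-bound plan.

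No enlarged ansatz or dimension count can rescue this step. Suppose $RaR=a$ and $a\transpose=\pm a$. Then $Ra=a$ gives $\img a\subseteq\img R$. Also $aR=a$ gives $\overline{R}a\transpose=a\transpose$, hence $\img a\subseteq\img\overline{R}$. So $\img a\subseteq U$, and likewise for $a\transpose$. With $U=\C u$ and $u$ real, this forces $a=c\,uu\transpose$. Such an $a$ is zero if it is antisymmetric, and has trace $c$ if it is symmetric. Your specific ansatz $uw\transpose\pm wu\transpose$ already requires $Rw=w$, i.e.\ $w\in U$. So $\dim U\le 1$ is precisely the situation in which $V$ \emph{is} a clique.

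The repair is what the paper does: establish both directions of the criterion ``$V$ is a clique of $\Gsym$ (resp.\ $\Gasym$) if and only if $\dim(\img V\cap\img\overline{V})\le 1$''.
\begin{itemize}
\item For the upper bound you only need $k\ge\lceil n/2\rceil+1$. There $\dim U\ge 2k-n\ge 2$, and your $\dim U\ge 2$ argument already finishes it.
\item The converse direction is the computation above. It reduces the lower bound to exhibiting a $k$-dimensional $E\subseteq\C^n$ with $\dim(E\cap\overline{E})\le 1$. You have not actually carried out the odd case; your ``suitable weights'' sketch is where this criterion is needed.
\end{itemize}
The paper uses a Haar-random $E$. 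Your explicit route works too: for $n=2k-1$, take $E=\lspan\bigl(\{(\ket{j}+\mathrm{i}\ket{k-1+j})/\sqrt2\mid j<k\}\cup\{\ket{n}\}\bigr)$, which gives $E\cap\overline{E}=\C\ket{n}$. An explicit odd witness is worth having, since the paper's genericity lemma is stated only for $k\le n/2$.

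Your even-$n$ block computation is fine up to a sign slip. For symmetric $X$ one gets $\tfrac12\bigl((P+T)+\mathrm{i}(Q-Q\transpose)\bigr)$, which does not affect the conclusion.
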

\begin{proof}
  Let $P_S^a$ and $P_S^s$ be the projection onto the operator spaces of $\Gasym$ and $\Gsym$ respectively.  By \cref{prop:cliquecond}, we want to determine the maximum $k$ such that there exists an isometry $V\colon \C^k \to \C^n$ with $P_S^a(VyV\adjoint) \neq 0$ respectively $P_S^s(VyV\adjoint) \neq 0$ for all traceless non-zero matrices $y \in M_k$. Concretely, the conditions become
  \begin{equation*}
    \input{diagrams/clique-cond-asym.tikz}
  \end{equation*}
  and
  \begin{equation*}
    \input{diagrams/clique-cond-sym.tikz}
  \end{equation*}
  We thus want to understand under which conditions the map
  \begin{equation*}
    \input{diagrams/clique-symasym-map.tikz}
  \end{equation*}
  has traceless matrices in its kernel, where $X \in \{\operatorname{Sym}, \operatorname{Asym}\}$. First note that $V \otimes \overline{V}$ maps $y \mapsto VyV\adjoint$ and thus preserves the trace. We need to find a traceless $x \in \img (V \otimes \overline{V}) \cap X^\bot$.

  First consider $X = \operatorname{Sym}$. We have $x \in \img (V \otimes \overline{V})$ if and only if $x = Vy\overline{V}\transpose$.
  This in turn is equivalent to $\img x \subseteq \img V$ and $\img x\transpose \subseteq \img \overline{V}$. Now if $x$ is required to be antisymmetric then $\img x\transpose = -\img x = \img x$.
  We thus need to determine when there exists an antisymmetric $x \in M_n$ with $\img x \subseteq \img V \cap \img \overline{V}$. Let $S_V \coloneqq \img V \cap \img \overline{V}$. If $\dim S_V \geq 2$, such an $x$ exists. Indeed, choose orthogonal $\psi_1, \psi_2 \in S_V$ and let
  \begin{equation*}
    x = \psi_1\psi_2\transpose - \psi_2\psi_1\transpose.
  \end{equation*}
  Then $x$ is antisymmetric (and thus automatically traceless) and $\img x \subseteq \img S_V$ as desired. On the other hand, if $\dim S_V < 2$ then no such non-zero map exists because there are no rank $1$ asymmetric matrices.

  The story is analogous for $X = \operatorname{Asym}$. We are looking for a traceless symmetric $x \in \img(V \otimes \overline{V})$. Since $\img x\transpose = \img x$, we again require $\img x \subseteq \img V \cap \img \overline{V} = S_V$. For $\dim S_V \geq 2$, we may choose
  \begin{equation*}
    x = \psi_1\psi_2\adjoint + \overline{\psi_2}\psi_1\transpose,
  \end{equation*}
  which is symmetric and satisfies the image condition since $S_V$ is closed under conjugation. Moreover, $x$ is traceless: We have
  \begin{align*}
    \Tr \psi_1\psi_2\adjoint + \overline{\psi_2}\psi_1\transpose &= \Tr \psi_1\psi_2\adjoint + \Tr \overline{\psi_2}\psi_1\transpose \\
                                                                 &= \lrangle{\psi_2, \psi_1} + \lrangle{\overline{\psi_1}, \overline{\psi_2}}\\
                                                                 &= 2\lrangle{\psi_2, \psi_1} = 0.
  \end{align*}
  Conversely, if $\dim S_V < 2$ no such non-zero $x$ exists. If $S_V = \C\psi$, the only choice is
  \begin{equation*}
    x = \psi\psi\transpose
  \end{equation*}
  which has trace $\lrangle{\overline{\psi}, \psi} \neq 0$.

  We have shown that $V\colon \C^k \to \C^n$ witnesses a $k$-clique if and only if $\dim S_V < 2$. We first show that this is unsatisfiable for $k > \lceil n / 2 \rceil$. It holds that
  \begin{equation*}
    \dim S_V = \dim (\img V \cap \img \overline{V}) = \dim \img V + \dim \img \overline{V} - \dim (\img V + \img \overline{V}).
  \end{equation*}
  Since $V$ is an isometry, we have $\rk V = \rk \overline{V} = k$. We thus have $\dim S_V < 2$ if and only if $2k - \dim(\img V \oplus \img \overline{V}) < 2$, or equivalently
  \begin{equation}\label{eq:cliquecondk}
      2k - 2 < \dim (\img V \oplus \img \overline{V}) \leq n.
  \end{equation}
  This is only satisfiable if
  \begin{equation*}
    k \leq \frac{n + 1}{2}. % if you think this is wrong, note that we have \leq instead of <
  \end{equation*}
  For $n$ odd, we have
  \begin{equation*}
    \frac{n + 1}{2} = \left\lceil \frac{n}{2} \right\rceil
  \end{equation*}
  while for $n$ even and $k \in \N$ we have
  \begin{equation*}
    k \leq \frac{n + 1}{2} \qquad\IFF\qquad k \leq \frac{n}{2} = \left\lceil \frac n 2 \right\rceil,
  \end{equation*}
  which yields the desired bound. Conversely, let $k = \lceil n / 2 \rceil$ and choose a random isometry $V \colon \C^k \to \C^n$. Then $\dim(\img V + \img \overline{V}) = n - \dim(\img V \cap \img \overline{V}) = n$ by the lemma below. This completes the proof.
\end{proof}

\begin{lemma}
  Let $1 \le k \le n/2$. If $E$ is a Haar-distributed random $k$-dimensional complex subspace of $\mathbb{C}^n$, then
  \begin{equation*}
    E \cap \overline{E} = \{0\}
  \end{equation*}
  almost surely.
  \begin{proof}
    We generate a random $k$-dimensional subspace $E \subseteq \mathbb{C}^n$ as the range of a complex Ginibre matrix. Let $A \in M_{n \times k}(\mathbb{C})$ be a matrix whose entries are independent complex Gaussian random variables. With probability one, $A$ has rank $k$, and its column space $E = \operatorname{im}(A)$ is a random element of the complex Grassmannian with the unitarily invariant distribution.

    Suppose that $E \cap \overline{E} \neq \{0\}$. Then there exists $v \neq 0$ such that $v \in E \cap \overline{E}$ and thus there exist vectors $x,y \in \mathbb{C}^k$, not both zero, such that $v = A x = \overline{A} y$. Equivalently, $A x - \overline{A} y = 0$. Define the matrix
    \begin{equation*}
      M(A) =
      \begin{bmatrix}
        A & -\overline{A}
      \end{bmatrix}
      \in M_{n \times 2k}(\mathbb{C}).
    \end{equation*}
    Then the above condition becomes
    \begin{equation*}
    M(A)
    \begin{pmatrix}
      x \\
      y
    \end{pmatrix}
    = 0.
    \end{equation*}
    Thus $E \cap \overline{E} \neq \{0\}$ if and only if $\ker M(A) \neq \{0\}$, or equivalently $\operatorname{rank} M(A) < 2k$.
    Since $k \le n/2$, we have $2k \le n$, so $M(A)$ can have full column rank $2k$. The condition $\operatorname{rank} M(A) < 2k$ is equivalent to the vanishing of all $2k \times 2k$ minors of $M(A)$, which are polynomial functions of the real and imaginary parts of the entries of $A$. Since the entries of $A$ are independent continuous random variables, to show that the probability that these polynomial vanish is zero we need to prove that the polynomials are not identically zero.

    To this end, we shall exhibit a matrix $A$ with $\operatorname{rank} M(A) = 2k$. Consider the deterministic matrix
    \begin{equation*}
      A =
      \begin{bmatrix}
        I_k \\
        i I_k \\
        0
      \end{bmatrix}
      \in M_{n\times k}(\mathbb{C}),
    \end{equation*}
    where the last block has size $(n-2k)\times k$. Then
    \begin{equation*}
      M(A)=
      \begin{bmatrix}
        I_k & -I_k \\
        iI_k & iI_k \\
        0 & 0
      \end{bmatrix}.
    \end{equation*}
    If
    \begin{equation*}
      M(A)
      \begin{pmatrix}
        x\\
        y
      \end{pmatrix}
      =0,
    \end{equation*}
    then $x-y=0$ and $ix+iy=0$ which gives $x=y=0$. Therefore $\ker M(A)=\{0\}$ and $\operatorname{rank} M(A)=2k$ for our choice of $A$, finishing the proof.
  \end{proof}
\end{lemma}

\subsubsection{The Complete and Empty Graphs}

We conclude by considering the empty and the complete graph. We have seen that the operator space corresponding to the empty graph is $S_{\compl{K}} = 0$, while the operator space $S_K$ corresponding to the complete graph is the set of all traceless matrices. In the former case, we have $S_{\compl{K}} \oplus \C\identity = \C \identity$. It follows that any isometry $V \colon \C \to \C^n$ satisfies $V\adjoint (S_{\compl{K}} \oplus \C\identity) V = M_1$ and we cannot do any better by dimensionality. On the other hand, we have $S_K \oplus \C\identity = M_n$, so the identity is an isometry $\C^n \to \C^n$ witnessing a clique of size $n$. We can thus state the final result of our graph theoretic investigation.

\begin{proposition}
  We have $\omega(K_n) = n$ and $\omega(\compl{K_n}) = 1$.
\end{proposition}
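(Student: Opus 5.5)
We verify the two equalities separately, using the explicit operator spaces of the two graphs and the reformulation of cliques via isometries from \cref{prop:cliquecond}. Recall the definition of a $k$-clique: an isometry $V\colon \C^k \to \C^n$ with $V\adjoint(S \oplus \C\identity)V = M_k$. Since $V\adjoint(\cdot)V$ maps into $M_k$ and $k \le n$, the clique number is always bounded above by $n$.

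\emph{The complete graph.} By \cref{prop:unitary-invariant-qgraphs}, the realignment of the adjacency operator of $K_n$ is the projector onto the traceless matrices. Hence $S_K = \mathfrak{sl}_n(\C)$. Since $\identity$ is not traceless, $S_K \oplus \C\identity = M_n$. Taking $V = \identity_n$ gives $V\adjoint(S_K \oplus \C\identity)V = M_n$, so $K_n$ has an $n$-clique. Together with the trivial upper bound, $\omega(K_n) = n$. Alternatively, in the language of \cref{prop:cliquecond}: if $y$ is traceless and $P_{S_K}(y) = 0$, then $y = 0$ because $P_{S_K}$ is the identity on traceless matrices.

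\emph{The empty graph.} The adjacency operator of $\compl{K}_n$ is zero, so its realignment is zero and $S_{\compl{K}} = 0$. For any isometry $V\colon \C^k \to \C^n$,
\[
V\adjoint(S_{\compl{K}} \oplus \C\identity)V = \C V\adjoint V = \C\identity_k,
\]
which has dimension $1$. This equals $M_k$ only when $k^2 = 1$. Any unit vector $V\colon \C \to \C^n$ does give $\C\identity_1 = M_1$, so a $1$-clique exists. Therefore $\omega(\compl{K}_n) = 1$.

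The only point needing care is identifying the operator spaces, which is immediate from \cref{prop:unitary-invariant-qgraphs}. Everything else is a dimension count, so there is no real obstacle.
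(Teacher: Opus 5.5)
Your proposal is correct and follows essentially the same argument as the paper. For $K_n$, you identify $S_K$ as the traceless matrices, so $S_K \oplus \C\identity = M_n$ and the identity isometry witnesses an $n$-clique; for $\compl{K}_n$, you use $S_{\compl{K}} = 0$, so $V\adjoint(\C\identity)V = \C\identity_k$ forces $k = 1$ by a dimension count, with the trivial bound $k \le n$ made explicit where the paper leaves it implicit.
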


%% file: diagrams/proj-xb-kernel-cond-3.tikz
\begin{tikzpicture}
	\begin{pgfonlayer}{nodelayer}
		\node [style=map] (0) at (0, 0) {$B$};
		\node [style=none] (1) at (0, 0.75) {};
		\node [style=cket] (2) at (0, -1) {$d_y$};
	\end{pgfonlayer}
	\begin{pgfonlayer}{edgelayer}
		\draw [style=x-wire] (2) to (0);
		\draw [style=x-wire] (0) to (1.center);
	\end{pgfonlayer}
\end{tikzpicture}

%% file: content/conclusion.tex
In this work, we have introduced and systematically studied families of quantum graphs on $M_n$ that are invariant under the action of classical matrix groups. By considering the chain of subgroups $U(n) \supseteq O(n) \supseteq \operatorname{Hyp}(n) \supseteq DO(n) \subseteq DU(n)$, we have shown that progressively smaller symmetry groups give rise to progressively richer classes of quantum graphs, mirroring the classical situation. Our main conceptual contribution is the parametrisation of $DU(n)$- and $DO(n)$-invariant quantum graphs by triples of matrices $(A, B, C)$, building on previous work in quantum information theory \cite{singh2021diagonal, nechita2021graphical}.
This parametrisation reveals a clean separation of roles of the three matrices: $A$ encodes a classical graph (\cref{xa-qgraph-char}), $C$ introduces \emph{strange edges} carrying a phase (\cref{prop:gsymgasymstrange}), and $B$ provides a purely quantum contribution with no classical analogue. We call the resulting classical model the strange graph $\mathfrak{G}(A, C)$. Importantly, we prove a \emph{splitting principle} that decomposes graph-theoretic conditions into independent conditions on the $(A, C)$ and $B$ parts.

To the best of our knowledge, this is the first time that large, parametric families of non-trivial quantum graphs have been exhibited for which standard graph parameters such as the number of connected components, the independence number, the clique number, and the chromatic number, can all be computed or tightly bounded analytically. Our work provides a rich source of examples, counterexamples, and a guiding framework for future study of quantum graph theory.

Several questions remain open. The clique number of general ABC graphs $X_{A,B,C}$ remains undetermined (see \cref{prop:clique-bound-XB,prop:classcomplomega} for the partial results obtained in this work), as do tight bounds on the chromatic number of $X_{A, B}$ in terms of $A$ and $B$. The difficulty of this problem lies in the very complex interaction between the strange graph parts described by the matrices $A$ and $C$, and the purely quantum parts described by the projector $B$.

Another direction concerns the connectedness equivalence established in \cref{prop:connectedequivn3}: while connectedness of $X_{A, B, C}$ and of the strange graph $\mathfrak{G}(A, C)$ agree for $n \geq 3$, the correspondence breaks down at the level of connected components, as shown by \cref{prop:conncompcounterex}. It would be interesting to determine the exact number of connected components of $X_{A, \ccdot, C}$ in terms of $\mathfrak{G}(A, C)$, and in particular to understand the role played by isolated strange edges with phase $\pi$. More broadly, the quantum clique number $\omega(X_{A, \ccdot})$ can differ from the classical clique number $\omega(A)$ in both directions, and the precise relationship between these two quantities remains unclear beyond the bound $\omega(X_{A, \ccdot}) \geq \omega(A) - 1$.

A natural next step is to consider \emph{quantum} symmetries instead. It is known that the quantum symmetries of $M_n$ are given by the projective free unitary quantum group $PU_n^+$ \cite[Corollary 4.1]{banica_symmetries_1999}. It would thus be interesting to consider which classes of quantum graphs are acted on by quantum subgroups of $PU_n^+$, and whether they can also be described in discrete terms.